\newcommand{\fulltitle}{Asymptotic convergence of spectral inverse iterations for stochastic eigenvalue problems}
\newcommand{\shorttitle}{Asymptotic convergence of spectral inverse iterations for sEVPs}
\def\L{\Lambda}
\def\k{\kappa}
\def\r{\rho}
\def\a{\alpha}
\def\b{\beta}
\def\g{\gamma}
\def\u{\mathbf{u}}
\def\vv{\mathbf{v}}
\def\w{\mathbf{w}}
\def\.{\cdot}
\def\x{\mathbf{x}}
\newcommand{\R}{\mathbb{R}}
\newcommand{\N}{\mathbb{N}}
\newcommand{\A}{\mathcal{A}}
\newcommand{\E}{\mathbb{E}}
\newcommand{\Var}{\mathrm{Var}}
\newcommand{\mbf}[1]{\mathbf{#1}}
\newcommand{\mbb}[1]{\mathbb{#1}}
\newcommand{\mc}[1]{\mathcal{#1}}
\newcommand{\W}{W_{\mc{A}}}
\newcommand{\supp}{\mathop{\mathrm{supp}}}
\newcommand{\hv}{\hat{\vv}}
\newcommand{\hu}{\hat{\u}}
\newcommand{\hw}{\hat{\mbf{w}}}
\newcommand{\sirv}{L^2_{\nu}(\Gamma)}
\newcommand{\fsmi}{(\N_0^{\infty})_c}
\DeclareMathOperator*{\esssup}{ess\,sup}
\DeclareMathOperator*{\essinf}{ess\,inf}
\DeclareMathOperator*{\dist}{dist}
\DeclareMathOperator*{\spa}{span}
\DeclareMathOperator*{\diag}{diag}
\newtheorem{thm}{Theorem}
\newtheorem{lmm}{Lemma}
\newtheorem{cor}{Corollary}
\newtheorem{prp}{Proposition}
\newtheorem{alg}{Algorithm}
\newtheorem{rmk}{Remark}
\title{}
\begin{document}

\title{\fulltitle}
\author{Harri Hakula \footnote{Aalto University, Department of Mathematics and Systems Analysis, P.O.\ Box 11100, FI-00076 Aalto, Finland; Email: harri.hakula@aalto.fi.} \and Mikael Laaksonen\footnote{Aalto University, Department of Mathematics and Systems Analysis, P.O.\ Box 11100, FI-00076 Aalto, Finland; Email: mikael.j.laaksonen@aalto.fi.}}
\date{June 8, 2017}

\maketitle

\begin{abstract}
We consider and analyze applying a spectral inverse iteration algorithm and its subspace iteration variant for computing eigenpairs of an elliptic operator with random coefficients. With these iterative algorithms the solution is sought from a finite dimensional space formed as the tensor product of the approximation space for the underlying stochastic function space, and the approximation space for the underlying spatial function space. Sparse polynomial approximation is employed to obtain the first one, while classical finite elements are employed to obtain the latter. An error analysis is presented for the asymptotic convergence of the spectral inverse iteration to the smallest eigenvalue and the associated eigenvector of the problem. A series of detailed numerical experiments supports the conclusions of this analysis. Numerical experiments are also presented for the spectral subspace iteration, and convergence of the algorithm is observed in an example case, where the eigenvalues cross within the parameter space. The outputs of both algorithms are verified by comparing to solutions obtained by a sparse stochastic collocation method.

\bigskip

\noindent{\it Keywords\/}: Eigenvalue problems, error estimation, finite elements, sparse tensor approximation

\noindent{\it AMS subject classifications\/}: 65C20, 65N12, 65N15, 65N25, 65N30
\end{abstract}

\section{Introduction}

During the recent years numerical solution of stochastic partial differential equations (sPDE) has attracted a lot of attention and become a well-established field. However, the field of stochastic eigenvalue problems (sEVP) and their numerical solution is still in its infancy. It is natural that, after the source problem, more effort is put on addressing the eigenvalue problem.

A few different algorithms have recently been suggested for computing approximate eigenpairs of sEVPs. As with sPDEs, the solution methods are typically divided into intrusive and non-intrusive ones. A benchmark for non-intrusive methods is the sparse collocation algorithm suggested and thoroughly analyzed by Andreev and Schwab in \cite{andreevschwab12}. An attempt towards a Galerkin-based (intrusive) method was made by Verhoosel et al. in \cite{verhooselgutierrezhulshoff06}, though this method omits uniform normalization of the eigenmodes. Very recently Meidani and Ghanem proposed a spectral power iteration, in which the eigenmodes are normalized using a quadrature rule over the parameter space \cite{meidanighanem14}. The algorithm has been further developed and studied by Soused\'ik and Elman in \cite{sousedikelman16}. However, neither of the papers present a comprehensive error analysis for the method.

Inspired by the original method of Meidani and Ghanem we have suggested a purely Galerkin-based spectral inverse iteration, in which normalization of the eigenmodes is achieved via solution of a simple nonlinear system \cite{hakulakaarniojalaaksonen15}. This method, and its generalization to a spectral subspace iteration, is the focus of the current paper. Although the algorithms in \cite{meidanighanem14} and \cite{sousedikelman16} differ from ours in the way normalization is performed, the basic principles are still the same and hence our results on convergence should apply to these methods as well.

In this work we consider computing eigenpairs of an elliptic operator with random coefficients. We assume a physical domain $D \subset \R^d$ and, in order to capture the random dimension of the system, a parameter domain $\Gamma \subset \R^{\infty}$ with associated measure $\nu$. One may think of a parametrization that arises from Karhunen-Lo\`eve representations of the random coefficients in the system, for instance. Discretization in space is achieved by standard FEM and associated with a discretization parameter $h$, whereas discretization in the random dimension is achieved using collections of certain multivariate polynomials. These collections are represented by multi-index sets $\mc{A}_{\epsilon}$ of increasing cardinality $\# \mc{A}_{\epsilon}$ as $\epsilon \to 0$.

In the current paper we present a step-by-step analysis that leads to the main result: the asymptotic convergence of the spectral inverse iteration towards the exact eigenpair $(\mu, u)$. In this context the eigenpair of interest is the ground state, i.e., the smallest eigenvalue and the associated eigenfunction of the system. However, analogously to the classical inverse iteration, the computation of other eigenpairs may be possible by using a suitably chosen shift parameter $\lambda \in \R$. We show that under sufficient assumptions the iterates of the algorithm $(\mu_k, u_k)$ for $k = 1, 2, \ldots$ obey
\begin{equation}
\label{eq:totalconvergence1}
|| u - u_{h, \mc{A}, k} ||_{L^2_{\nu}(\Gamma) \otimes L^2(D)} \lesssim h^{1+l} + (\# \mc{A}_{\epsilon} )^{-r} + \lambda_{1/2}^k
\end{equation}
and
\begin{equation}
\label{eq:totalconvergence2}
|| \mu - \mu_{h, \mc{A}, k} ||_{L^2_{\nu}(\Gamma)} \lesssim h^{2l} +( \# \mc{A}_{\epsilon} )^{-r} + \lambda_{1/2}^k,
\end{equation}
where $l \in \N$ is the degree of polynomials used in the spatial discretization and $r > 0$ depends on the properties of the region to which the solution, as a function of the parameter vector, admits a complex-analytic extension. The quantity $\lambda_{1/2}$ reflects the gap between the two smallest eigenvalues of the system and should be less than one.

The first term in the formulas \eqref{eq:totalconvergence1} and \eqref{eq:totalconvergence2} is justified by standard theory for Galerkin approximation of eigenvalue problems, a simple consequence of which we have recapped in Theorem \ref{thm:herror}. The second term can be deduced from Theorem \ref{thm:fpconv}, which bounds the Galerkin approximation errors by residuals of certain polynomial approximations of the solution. Using best $P$-term polynomial approximations, we see that these residuals are ultimately expected to decay at an algebraic rate $r > 0$, see \cite{bieriandreevschwab09} and \cite{bierischwab09}. Finally, the third term follows from Theorem \ref{thm:iterconv}, which states that asymptotically the iterates of the spectral inverse iteration converge to a fixed point in geometric fashion. Here the analogy to classical inverse iteration is evident. Each of these three important steps that comprise the main result is separately verified through detailed numerical examples.

A variant of our algorithm for spectral subspace iteration is also presented. No analysis of this algorithm is given, but the numerical experiments support the conclusion that it convergences towards the exact subspace of interest, and that the rate of convergence is analogous to what we would expect from classical theory. This is despite the fact that the individual eigenmodes, as defined by the pointwise order of magnitude of the eigenvalues, are not continuous functions over the parameter space due to an eigenvalue crossing. To the authors' knowledge such a scenario has not yet been considered in the scientific literature.

The rest of the paper is organized as follows. Our model problem and its fundamental properties are assessed in Sections \ref{sec:problem} and \ref{sec:analyticity}. A detailed review of the discretization of the spatial and stochastic approximation spaces is given in Section \ref{sec:sfem}. Analysis of the spectral inverse iteration, supported by thorough numerical experiments, is given in Section \ref{sec:inverseiteration}. Finally, the algorithm of spectral subspace iteration and numerical experiments of its convergence are presented in Section \ref{sec:subspaceiteration}.

\section{Problem statement}
\label{sec:problem}

In this work we consider eigenvalue problems of elliptic operators with random coefficients. It is assumed that the random coefficients admit a parametrization with respect to countably many independent and bounded random variables. As a model problem we consider the eigenvalue problem of a diffusion operator with a random diffusion coefficient. It will be evident, however, that our methods and analysis in fact cover a much broader class of problems.

\subsection{Model problem}

Let $(\Omega, \mc{F}, \mc{P})$ be a probability space, $\Omega$ being the set of outcomes, $\mc{F}$ a $\sigma$-algebra of events, and $\mc{P}$ a probability measure defined on $\Omega$. We denote by $L^2_{\mc{P}}(\Omega)$ the space of square integrable random variables on $\Omega$ and define for $v \in L^2_{\mc{P}}(\Omega)$ the expected value
\[
\E[v] = \int_{\Omega} v(\omega) \ d\mc{P}(w)
\]
and variance $\Var[v] = \E[(v-\E[v])^2]$.

Let $D \subset \mbb{R}^d$ be a bounded convex domain with a sufficiently smooth boundary and assume a diffusion coefficient $a\!: D \times \Omega \to \mbb{R}$ that is a random field on $D$. The diffusion coefficient is assumed to be strictly uniformly positive and uniformly bounded, i.e., for some positive constants $a_{\min}$ and $a_{\max}$ it holds that
\begin{equation}
\label{eq:coefficientassumption}
\mc{P} \left( \omega \in \Omega : a_{\min} \le \essinf_{x \in D} a(x, \omega) \le \esssup_{x \in D} a(x, \omega) \le a_{\max} \right) = 1.
\end{equation}
We now formulate the model problem as: find functions $\mu\!: \Omega \to \mbb{R}$ and $u\!: D \times \Omega \to \mbb{R}$ such that the equations
\begin{equation}
\label{eq:modelproblem}
\left\{ \begin{array}{ll}
- \nabla \cdot (a(\cdot, \omega) \nabla u(\cdot, \omega)) = \mu(\omega) u(\cdot, \omega) & \textrm{ in } D \\
u(\cdot, \omega) = 0 & \textrm{ on } \partial D
\end{array} \right.
\end{equation}
hold $\mc{P}$-almost surely. In order to make the solutions physically meaningful we also impose a normalization condition $||u(\cdot, \omega)||_{L^2(D)} = 1$ that should hold $\mc{P}$-almost surely.

\subsection{Parametrization of the random input}

We make the assumption that the input random field admits a representation of the form
\begin{equation}
\label{eq:klexpansion}
a(x, \omega) = a_0(x) + \sum_{m = 1}^{\infty} a_m(x) y_m(\omega),
\end{equation}
where $\{y_m\}_{m = 1}^{\infty}$ are mutually independent and bounded random variables. For simplicity, we assume here that each $y_m$ is uniformly distributed. Thus, after possible rescaling, the dependence on $\omega$ is now parametrized by the vector $y = (y_1, y_2, \ldots) \in \Gamma := [-1,1]^{\infty}$. We denote by $\nu$ the underlying uniform product probability measure and by $L^2_{\nu}(\Gamma)$ the corresponding weighted $L^2$-space.

The usual convention is that the parametrization \eqref{eq:klexpansion} results from a Karhunen-Lo\`eve expansion, which gives $a(x, \omega)$ as a linear combination of the eigenfunctions of the associated covariance operator. The distinguishing feature of the Karhunen-Lo\`eve expansion compared to other linear expansions is that it minimizes the mean square truncation error \cite{ghanemspanos03}.

It is easy to see that $a_0 \in L^{\infty}(D)$ and
\begin{equation}
\label{eq:positiveness}
\essinf_{x \in D} a_0(x) > \sum_{m=1}^{\infty} || a_m ||_{L^{\infty}(D)}
\end{equation}
are sufficient conditions to ensure the assumption \eqref{eq:coefficientassumption}. In order to ensure analyticity of the eigenpair $(\mu, u)$ with respect to the parameter vector $y = (y_1, y_2, \ldots)$ we assume that
\begin{equation}
\label{eq:psummability1}
\sum_{m=1}^{\infty} || a_m ||^{p_0}_{L^{\infty}(D)} < \infty
\end{equation}
for some $p_0 \in (0,1)$ and that for a certain level of smoothness $s \in \N$ we have $a_0 \in W^{s,\infty}(D)$ and
\begin{equation}
\label{eq:psummability2}
\sum_{m=1}^{\infty} || a_m ||^{p_s}_{W^{s,\infty}(D)} < \infty
\end{equation}
for some $p_s \in (0,1)$. In particular, we consider the interesting case of algebraic
\[
|| a_m ||_{L^{\infty}(D)} \le C m^{-\varsigma}, \quad \varsigma > 1, \quad m = 1,2,\ldots
\]
decay of the coefficients in the series \eqref{eq:klexpansion}.

\subsection{Parametric eigenvalue problem and its variational formulation}

With the diffusion coefficient given by \eqref{eq:klexpansion}, the model problem \eqref{eq:modelproblem} becomes an eigenvalue problem of the operator
\[
(A(y)v)(x) := -\nabla \cdot (a(x,y) \nabla v(x)), \quad x \in D, \quad y \in \Gamma,
\]
where
\[
a(x, y) = a_0(x) + \sum_{m = 1}^{\infty} a_m(x) y_m.
\]
Thus, we obtain the parametric eigenvalue problem: find $\mu\!: \Gamma \to \mbb{R}$ and $u\!: \Gamma \to H_0^1(D)$ such that
\begin{equation}
\label{eq:parametricevp}
A(y)u(y) = \mu(y)u(y) \quad \forall y \in \Gamma.
\end{equation}
We denote by $\sigma(A(y))$ the set of eigenvalues of $A(y)$ for $y \in \Gamma$.

For any fixed $y \in \Gamma$ the problem \eqref{eq:parametricevp} reduces to a single deterministic eigenvalue problem. In variational form this is given by: find $\mu(y) \in \mbb{R}$ and $u(\cdot, y) \in H_0^1(D)$ such that
\begin{equation}
\label{eq:varform}
b(y; u(\cdot, y), v) = \mu(y) \langle u(\cdot, y), v \rangle_{L^2(D)} \quad \forall v \in H_0^1(D),
\end{equation}
where
\[
b(y; u, v) := \int_D a(\cdot ,y) \nabla u \cdot \nabla v \ dx.
\]
Under assumption \eqref{eq:positiveness} the bilinear form $b(y; u, v)$ is continuous and elliptic. Thus, as in \cite{hakulakaarniojalaaksonen15} and \cite{andreevschwab12}, we deduce that the problem \eqref{eq:varform} admits a countable number of real eigenvalues and corresponding eigenfunctions that form an orthogonal basis of $L^2(D)$.

\section{Analyticity of eigenmodes}
\label{sec:analyticity}

A key issue in the analysis of parametric eigenvalue problems is that eigenvalues may cross within the parameter space. Here we first disregard this possibility and recap the main results from \cite{andreevschwab12} for simple eigenvalues that are sufficiently well separated from the rest of the spectrum. After this we briefly comment on the case of possibly clustered eigenvalues and associated invariant subspaces.

\subsection{Eigenmodes of simple eigenvalues}

Let us first restrict our analysis to eigenvalues that are strictly nondegenerate. We call an eigenvalue $\mu$ of problem \eqref{eq:parametricevp} strictly nondegenerate if
\begin{itemize}
\item[(i)] $\mu(y)$ is simple as an eigenvalue of $A(y)$ for all $y \in \Gamma$ and
\item[(ii)] the minimum spectral gap $\inf_{y \in \Gamma} \dist(\mu(y), \sigma(A(y)) \backslash \{\mu(y) \})$ is positive.
\end{itemize}
In the case of strictly nondegenerate eigenvalues, the eigenpair $(\mu, u)$ is in fact analytic with respect to the parameter vector $y$.

\begin{prp}
\label{prp:analyticity}
Consider a strictly nondegenerate eigenvalue $\mu$ of the problem \eqref{eq:parametricevp} and the corresponding eigenfunction $u$ normalized so that $|| u(y) ||_{L^2(D)} = 1$ for all $y \in \Gamma$. For $s \in \N$ assume that $a_0 \in W^{s,\infty}(D)$ and the assumptions \eqref{eq:positiveness} -- \eqref{eq:psummability2} hold for some $p_0, p_s \in (0,1)$. Given $\tau = (\tau_1, \tau_2, \ldots)\in \mbb{R}_+^{\infty}$ define
\[
E(\tau) := \{ z \in \mbb{C}^{\infty} \ | \ \dist(z_m, [-1,1]) \le \tau_m \}.
\]
Then there exists $C_1 > 0$ independent of $m$ such that with $C_2 > 0$ arbitrary and $\tau$ given by
\[
\tau_m = \min \left\{ C_1 ||a_m||^{p_0-1}_{L^{\infty}(D)}, C_2 ||a_m||^{p_s-1}_{W^{s,\infty}(D)} \right\}, \quad m = 1,2, \ldots
\]
the eigenpair $(\mu,u)$ can be extended to a jointly complex-analytic function on $E(\tau)$ with values in $\mbb{C} \times (H^{s+1}(D) \cap H_0^1 (D))$.
\end{prp}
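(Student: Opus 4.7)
The plan is to extend the diffusion coefficient $a(x,y)$ to complex parameters $z \in E(\tau)$, verify that the resulting operator $A(z)$ remains sectorial with a uniform spectral gap around $\mu$, and then extract the analytic eigenpair via the Riesz spectral projection, exactly in the spirit of Andreev and Schwab \cite{andreevschwab12}. The two conditions on $\tau_m$ are used, respectively, to preserve coercivity and to upgrade the target space from $H_0^1(D)$ to $H^{s+1}(D) \cap H_0^1(D)$.

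First, for $z \in E(\tau)$ the series $a(x, z) = a_0(x) + \sum_m a_m(x) z_m$ converges in $L^{\infty}(D)$ because
\[
\sum_{m=1}^{\infty} \|a_m\|_{L^{\infty}(D)} (1 + \tau_m) \;\le\; \sum_{m=1}^{\infty} \|a_m\|_{L^{\infty}(D)} + C_1 \sum_{m=1}^{\infty} \|a_m\|^{p_0}_{L^{\infty}(D)} < \infty,
\]
where the second sum is finite by \eqref{eq:psummability1}. Choosing $C_1$ small enough, depending only on $a_0$ and this sum, one obtains $\mathrm{Re}\,a(x,z) \ge \tfrac{1}{2}\,\essinf_D a_0$ with $|\mathrm{Im}\,a(x,z)|$ uniformly bounded on $D \times E(\tau)$. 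The parametric bilinear form $b(z;\cdot,\cdot)$ is therefore sectorial with constants independent of $z$, and $A(z)$ admits uniform resolvent bounds outside a fixed sector containing its spectrum.

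Second, the assumption that $\mu(y)$ is strictly nondegenerate lets me choose a single rectifiable contour $\Lambda \subset \mbb{C}$ that encloses $\mu(y)$ but no other point of $\sigma(A(y))$ for every real $y \in \Gamma$. After possibly shrinking $C_1$, a Neumann-series perturbation of the resolvent shows that $\Lambda$ still isolates a single eigenvalue of $A(z)$ for every $z \in E(\tau)$. The Riesz projection
\[
P(z) := -\frac{1}{2\pi i} \oint_{\Lambda} (A(z) - \lambda I)^{-1} \, d\lambda
\]
is then a jointly complex-analytic family of rank-one projections on $L^2(D)$; the eigenvalue $\mu(z) = \mathrm{tr}(A(z) P(z)) / \mathrm{tr}(P(z))$ is analytic, and picking a reference $u_0$ with $P(y_0) u_0 \neq 0$, the eigenfunction $u(z) := P(z) u_0 / \alpha(z)$ is $H_0^1(D)$-valued analytic once $\alpha(z)$ is the (locally defined) analytic square root of $\int_D (P(z) u_0)^2 \, dx$, which enforces $\int_D u(z)^2 \, dx = 1$ and reduces to the prescribed normalization on $\Gamma$.

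Third, the bound $\tau_m \le C_2 \|a_m\|^{p_s - 1}_{W^{s,\infty}(D)}$ and \eqref{eq:psummability2} give $a(\cdot, z) \in W^{s,\infty}(D)$ uniformly on $E(\tau)$. Together with the convexity and smoothness of $D$, elliptic regularity lifts $u(z)$ to $H^{s+1}(D)$, and the Riesz integral interpreted with resolvents mapping $L^2(D) \to H^{s+1}(D)$ keeps its analytic dependence on $z$. The main technical obstacle is the second step: one must verify that one fixed contour $\Lambda$ simultaneously isolates the target eigenvalue for every $z \in E(\tau)$. This is what drives the smallness of $C_1$ and is precisely where strict nondegeneracy — both simplicity and a \emph{uniform} gap — is essential, since any eigenvalue crossing within $\Gamma$ would force $\Lambda$ to collapse. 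Once the Riesz projection is in place, the remaining analytic normalization and regularity upgrade are essentially mechanical.
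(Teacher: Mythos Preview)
Your proposal is correct and follows exactly the approach the paper defers to: the paper's proof is a one-line citation of Corollary 2 of Theorem 4 in \cite{andreevschwab12}, and you have sketched precisely that Riesz-projection argument, including the correct use of the analytic normalization $\int_D u(z)^2\,dx = 1$ (which coincides with the $L^2$ normalization on $\Gamma$) and the elliptic-regularity lift driven by \eqref{eq:psummability2}.
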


\begin{proof}
This is analogous to Corollary 2 of Theorem 4 in \cite{andreevschwab12}.
\end{proof}

It is well known that for elliptic operators on a connected domain $D$ the smallest eigenvalue is simple \cite{henrot06}. Thus, Proposition \ref{prp:analyticity} may at least be applied for the smallest eigenvalue of problem \eqref{eq:parametricevp}.

\subsection{Finite dimensional subspaces}

Let us now consider invariant subspaces for which the corresponding cluster of eigenvalues is sufficiently well separated from the rest of the spectrum. Assume a cluster $\mathcal{M}(y) = \{ \mu_q(y) \}_{q = 1}^Q$ of eigenvalues of \eqref{eq:parametricevp} so that
\begin{itemize}
\item[(i)] each  $\mu_q(y)$ is of finite multiplicity as an eigenvalue of $A(y)$ for all $y \in \Gamma$ and
\item[(ii)] the minimum spectral gap $\inf_{y \in \Gamma} \dist(\mathcal{M}(y), \sigma(A(y)) \backslash \mathcal{M}(y))$ is positive.
\end{itemize}
It is in general difficult to consider the analyticity of each of the eigenmodes separately. However, we might still expect the associated invariant subspace to be analytic as a function of $y$. More precisely, let $\{ u_q(y) \}_{q = 1}^{Q'}$ be a maximal collection of linearly independent eigenfunctions corresponding to the eigenvalues $\mathcal{M}(y)$ for all $y \in \Gamma$. It is not completely unreasonable to assume that $\spa \{ u_q(y) \}_{q = 1}^{Q'}$ is analytic, in a suitable sense, as a function of the parameter vector $y$. This assumption is the basis of our Algorithm of spectral subspace iteration given in Section \ref{sec:subspaceiteration}. For more information on the regularity of perturbed eigenvalues see \cite{kato76} and \cite{krieglmichorrainer11}.

\section{Stochastic finite elements}
\label{sec:sfem}

Proposition \ref{prp:analyticity}, under sufficient assumptions, guarantees the existence of an analytic eigenpair for problem \eqref{eq:parametricevp}. It now makes sense to look for the eigenvalue in the space $L^2_{\nu}(\Gamma)$ and the eigenfunction in the space $L^2_{\nu}(\Gamma) \otimes H^1_0(D)$. The space $H^1_0(D)$ may be discretized by means of the traditional finite element method. For the discretization of $L^2_{\nu}(\Gamma)$, we follow the usual convention in stochastic Galerkin methods and construct a basis of orthogonal polynomials of the input random variables. Orthogonal polynomials for various probability distributions exist and the use of these as the approximation basis has been observed to yield optimal rates of convergence \cite{soizeghanem04}, \cite{xiukarniadakis02}. Here we consider uniformly distributed random variables which lead to the choice of tensorized Legendre polynomials.

\subsection{Galerkin discretization in space}

Let $V_h \subset H_0^1(D)$ denote a finite dimensional approximation space associated with the discretization parameter $h > 0$. We assume approximation estimates
\begin{equation}
\label{eq:gestimate}
\inf_{v_h \in V_h} || v - v_h ||_{L^2(D)} \le Ch^{l+1}|| v ||_{H^{l+1}(D)}
\end{equation}
and
\begin{equation}
\label{eq:gestimate2}
\inf_{v_h \in V_h} || v - v_h ||_{H_0^1(D)} \le Ch^l|| v ||_{H^{l+1}(D)}
\end{equation}
that are standard for piecewise polynomials of degree $l$.

Fix $y \in \Gamma$ and let $(\mu_h, u_h)$ be the solution to the variational equation
\begin{equation}
\label{eq:dvarform}
b(y; u_h(\cdot, y), v_h) = \mu_h(y) \langle u_h(\cdot, y), v_h \rangle_{L^2(D)} \quad \forall v_h \in V_h,
\end{equation}
where $b(y; \cdot, \cdot)$ is as in \eqref{eq:varform}. Then we have the following bounds for the discretization error.
\begin{thm}
\label{thm:herror}
Assume \eqref{eq:gestimate} and \eqref{eq:gestimate2}. For $y \in \Gamma$ let $\mu(y)$ be a simple eigenvalue of \eqref{eq:varform} and $\mu_h(y)$ an eigenvalue of \eqref{eq:dvarform} such that $\lim_{h \to 0} \mu_h(y) = \mu(y)$. Let $u(\cdot, y) \in H^{1+l}(D)$ and $u_h(\cdot, y) \in V_h$ denote the associated eigenfunctions normalized in $L^2(D)$. Then there exists $C > 0$ such that
\begin{equation}
| \mu(y) - \mu_h(y) | \le C h^{2l},
\end{equation}
and
\begin{equation}
|| u(\cdot,y) - u_h(\cdot,y) ||_{L^2(D)} \le C h^{1+l}|| u(\cdot,y) ||_{H^{1+l}(D)}.
\end{equation}
as $h \to 0$.
\end{thm}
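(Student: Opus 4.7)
The plan is to fix $y \in \Gamma$ throughout and reduce everything to the classical Babu\v{s}ka--Osborn theory for Galerkin approximation of a self-adjoint compact-resolvent eigenvalue problem. Assumption \eqref{eq:positiveness} ensures that $b(y;\cdot,\cdot)$ is continuous and uniformly coercive on $H^1_0(D)$, so the solution operator $T(y)\!:L^2(D)\to H^1_0(D)$ defined by $b(y; T(y) f, v) = \langle f, v \rangle_{L^2(D)}$ is compact and self-adjoint in both the $L^2$ and the $b(y;\cdot,\cdot)$ inner products, and its nonzero eigenvalues are exactly $1/\mu_n(y)$ with matching eigenfunctions. Defining the discrete analog $T_h(y)\!:L^2(D)\to V_h$ in the same way, \eqref{eq:gestimate2} yields $T_h(y) \to T(y)$ in operator norm as $h \to 0$, so the hypothesis $\mu_h(y) \to \mu(y)$ together with simplicity of $\mu(y)$ ensures that for $h$ sufficiently small there is a unique discrete eigenvalue close to $\mu(y)$ and the corresponding discrete eigenspace is one-dimensional.

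First I would establish the energy-norm bound
\[
\|u(\cdot, y) - u_h(\cdot, y)\|_{H^1_0(D)} \le C h^l \|u(\cdot, y)\|_{H^{1+l}(D)},
\]
by a C\'ea-type argument: after fixing the sign of $u_h$ so that $\langle u, u_h\rangle_{L^2(D)} \ge 0$, one controls $u - u_h$ by the best-approximation error of $u$ in $V_h$ with respect to the energy norm and invokes \eqref{eq:gestimate2}. Next, an Aubin--Nitsche duality trick, which exploits $H^2$-regularity on the convex smooth domain $D$ together with \eqref{eq:gestimate}, upgrades this to the asserted $L^2$ estimate at the cost of one extra power of $h$, giving the second inequality.

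For the eigenvalue estimate I would use the classical Rayleigh-quotient identity valid for symmetric forms and $L^2$-normalized eigenpairs,
\[
\mu_h(y) - \mu(y) = b(y; u - u_h, u - u_h) - \mu(y) \|u - u_h\|_{L^2(D)}^2,
\]
which is obtained by expanding $b(y; u - u_h, u - u_h)$, substituting \eqref{eq:varform} and \eqref{eq:dvarform}, and using $\|u(\cdot,y)\|_{L^2(D)} = \|u_h(\cdot,y)\|_{L^2(D)} = 1$. Continuity of $b(y;\cdot,\cdot)$ and the previously established $H^1_0$ bound then immediately yield $|\mu(y) - \mu_h(y)| \le C(h^{2l} + h^{2(l+1)}) = O(h^{2l})$, which is the first inequality.

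The main technical obstacle is the careful bookkeeping around the normalization and sign of $u_h$ and the verification that the Galerkin projection onto the discrete eigenspace near $\mu(y)$ converges in operator norm at the expected rate; both issues are standard in Babu\v{s}ka--Osborn theory and ultimately rest on (i) simplicity of $\mu(y)$ and (ii) the operator-norm convergence $T_h(y) \to T(y)$, both of which are available here. No new difficulty is introduced by the presence of the parameter $y$, since the entire argument is pointwise in $y$; the uniformity in $y$ of the constants (needed later to integrate over $\Gamma$) follows from the uniform ellipticity bounds in \eqref{eq:positiveness}.
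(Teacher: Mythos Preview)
Your proposal is correct and follows precisely the classical Babu\v{s}ka--Osborn route that the paper invokes; the paper's own proof is simply a citation to \cite{babuskaosborn91} and \cite{boffi10} without spelling out the solution-operator, C\'ea/Aubin--Nitsche, and Rayleigh-quotient steps you describe. Your sketch is thus a faithful expansion of what the authors mean by ``follows from the theory of Galerkin approximation for variational eigenvalue problems.''
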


\begin{proof}
This follows from the theory of Galerkin approximation for variational eigenvalue problems. See Section 8 in \cite{babuskaosborn91} and Section 9 in \cite{boffi10}.
\end{proof}

Let $V_h = \spa \{ \varphi_i \}_{i \in J}$ where $J := \{1,2, \ldots, N \}$. Then \eqref{eq:dvarform} can be written as a parametric matrix eigenvalue problem: find $\mu_h \!: \Gamma \to \mbb{R}$ and $\u_h \!: \Gamma \to \mbb{R}^N$ such that
\begin{equation}
\label{eq:matrixevp}
\left( \mbf{K}^{(0)} + \sum_{m = 1}^{\infty} \mbf{K}^{(m)} y_m \right) \u_h (y) = \mu_h(y) \mbf{M} \u_h (y) \quad \forall y \in \Gamma,
\end{equation}
where $u_h(x,y) = \sum_{i \in J} \varphi_i(x) (\u_h)_i(y)$. The coefficient matrices are given by
\[
\mbf{K}^{(m)}_{ij} = \int_D a_m \nabla \varphi_i \cdot \nabla \varphi_j \ dx, \quad m = 0, 1, \ldots
\]
and
\[
\mbf{M}_{ij} = \int_D \varphi_i \varphi_j \ dx.
\]
For each fixed $y \in \Gamma$ the problem \eqref{eq:matrixevp} reduces to a positive-definite generalized matrix eigenvalue problem.

\subsection{Legendre chaos}

Recall that $y = (y_1, y_2, \ldots) \in \Gamma$ is a vector of mutually independent uniform random variables   and $\nu$ is the underlying constant product probablity measure. Now
\begin{equation}
\E[v] = \int_{\Gamma} v(y) \ d\nu(y)
\end{equation}
whenever the integral is finite. We define $(\mbb{N}_0^{\infty})_c$ to be the set of all multi-indices with finite support, i.e.,
\[
(\mbb{N}_0^{\infty})_c := \{ \a \in \mbb{N}_0^{\infty} \ | \ \# \supp(\a) < \infty \},
\]
where $\supp(\a) = \{ m \in \mbb{N} \ | \ \a_m \not= 0 \}$. Given a multi-index $\a \in (\mbb{N}_0^{\infty})_c$ we now define the multivariate Legendre polynomial
\[
\Lambda_{\a}(y) := \prod_{m \in \supp{\a}} L_{\a_m}(y_m), 
\]
where $L_p(x)$ denotes the univariate Legendre polynomial of degree $p$. We will assume the normalization $\E[\Lambda_{\a}^2] = 1$ for all $\a \in (\mbb{N}_0^{\infty})_c$.

The system $\{ \Lambda_{\a} (y) \ | \ \a \in (\mbb{N}_0^{\infty})_c \}$ forms an orthonormal basis of $L^2_{\nu}(\Gamma)$. Therefore, we may write any square integrable random variable $v$ in a series
\begin{align}
v(y) = \sum_{\a \in (\mbb{N}_0^{\infty})_c} v_{\a} \Lambda_{\a} (y)
\end{align}
with convergence in $L^2_{\nu}(\Gamma)$. The expansion coefficients are given by $v_{\a} = \E[v \Lambda_{\a}]$.

Due to the orthogonality of the Legendre polynomials we have $\E[\Lambda_{\a}] = \delta_{\a 0}$ and $\E[\Lambda_{\a} \Lambda_{\beta}] = \delta_{\a \beta}$ for all $\a, \beta \in (\mbb{N}_0^{\infty})_c$. Moreover, we denote
\begin{align*}
c_{\a \beta \gamma} & := \E[\Lambda_{\a} \Lambda_{\beta} \Lambda_{\gamma}], \quad \a, \beta, \gamma \in (\mbb{N}_0^{\infty})_c \\
c_{m \a \beta} & := \E[y_m \Lambda_{\a} \Lambda_{\beta}], \quad m \in \mbb{N}, \quad \a, \beta \in (\mbb{N}_0^{\infty})_c \\
c_{0 \a \beta} & := \delta_{\a \b}, \quad \a, \beta \in (\mbb{N}_0^{\infty})_c.
\end{align*}
We now obtain moment matrices $G^{(m)} \in \R^{P \times P}$ for $m \in \N_0$ and $G^{(\a)} \in \R^{P \times P}$ for $\a \in \mc{A}$ by setting $[G^{(m)}]_{\a \b} = c_{m \a \b}$ and $[G^{(\a)}]_{\b \g} = c_{\a \b \g}$.

\subsection{Sparse polynomial approximation in the parameter domain}

We fix a finite set $\mc{A} \subset (\mbb{N}_0^{\infty})_c$ and employ the approximation space $\W = \spa\{ \Lambda_{\a} \}_{\a \in \mc{A}} \subset L^2_{\nu} (\Gamma)$. We let $P_{\mc{A}}$ and $R_{\mc{A}}$ denote the underlying projection and residual operators so that $v \in L^2_{\nu}(\Gamma)$ is approximated by
\[
P_{\mc{A}} (v)(y) = \sum_{\a \in \mc{A}} v_{\a} \Lambda_{\a} (y)
\]
and the approximation error is given by $R_{\mc{A}}(v) = v - P_{\mc{A}} (v)$. Since
\begin{equation}
|| R_{\mc{A}}(v) ||^2_{\sirv} = \E \left[\left(\sum_{\a \in \A^c} v_{\a} \L_{\a}\right)^2 \right] = \sum_{\a \in \A^c} v^2_{\a},
\end{equation}
where $\A^c = \{ \a \in \fsmi \ | \ \a \notin \A \}$, we conclude that the choice of the multi-index set $\A$ ultimately determines the accuracy of our expansion.  

\begin{rmk}
The orthogonality of the basis $\{ \Lambda_{\a} \}_{\a \in \mc{A}}$ yields easy formulas for the mean and variance:
\begin{equation}
\E[P_{\mc{A}} (v)] = v_0, \quad \Var[P_{\mc{A}} (v)] = \left( \sum_{\a \in \mc{A}} v_{\a}^2 \right) - v_0^2.
\end{equation}
\end{rmk}

We proceed as in \cite{bieriandreevschwab09} and use best $P$-term approximations to prove convergence of the approximation error.

\begin{prp}
\label{prp:exponentialconv}
Let $H$ be a Hilbert space. Assume that $v \!: \Gamma \to H$ admits a complex-analytic extension in the region
\[
E(\tau) := \{ z \in \mbb{C}^{\infty} \ | \ \dist(z_m, [-1,1]) \le \tau_m \}
\]
with
\begin{equation}
\tau_m = C m^{\varrho}, \quad \varrho > 1, \quad m = 1,2, \ldots
\end{equation}
Then for each $\epsilon > 0$ and $0 < r < \varrho - \frac{1}{2}$ there exists $\mc{A}_{\epsilon} \subset (\mbb{N}_0^{\infty})_c$ such that
\begin{equation}
|| R_{\mc{A}_{\epsilon}}(v) ||_{L^2_{\nu}(\Gamma) \otimes H} \le \epsilon ||v||_{L^{\infty}(E(\tau);H)}
\end{equation}
and as $\epsilon \to 0$ we have
\begin{equation}
\# \mc{A}_{\epsilon} \le C(\varrho, r) \epsilon^{-1/r}.
\end{equation}
\end{prp}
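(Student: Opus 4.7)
The plan is the classical best $P$-term approximation argument: derive pointwise bounds on the Legendre coefficients $v_{\a} = \E[v \L_{\a}]$ from the complex-analytic extension of $v$ to $E(\tau)$, establish $\ell^p$-summability of $(\|v_{\a}\|_H)_{\a \in \fsmi}$ for a suitable $p < 2$, and apply Stechkin's lemma to obtain the algebraic best $P$-term rate $r = 1/p - 1/2$, as in \cite{bieriandreevschwab09}.

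For the coefficient estimate, I would invoke the univariate Bernstein-ellipse bound: if $g\!:[-1,1]\to\mbb{C}$ extends analytically and boundedly to the ellipse with foci $\pm 1$ and sum of semi-axes $\rho > 1$, its $n$-th normalized Legendre coefficient is at most $\sqrt{2n+1}\,\rho^{-n}$ times the $L^{\infty}$-norm on that ellipse. Applying this coordinatewise via Cauchy's formula on $E(\tau)$, with $\rho_m := \tau_m + \sqrt{1+\tau_m^2}$ chosen so that the Bernstein ellipse of parameter $\rho_m$ embeds into $\{z \in \mbb{C} : \dist(z,[-1,1]) \le \tau_m\}$, yields
\begin{equation*}
\|v_{\a}\|_H \le \|v\|_{L^{\infty}(E(\tau);H)} \prod_{m \in \supp(\a)} \sqrt{2\a_m+1}\,\rho_m^{-\a_m}.
\end{equation*}
Under the algebraic growth $\tau_m = Cm^{\varrho}$, one has $\rho_m \ge c\,m^{\varrho}$ uniformly in $m$.

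Now fix $0 < r < \varrho - 1/2$ and set $p := 1/(r+1/2) \in (1/\varrho, 2)$. Tensorizing the coefficient bound,
\begin{equation*}
\sum_{\a \in \fsmi} \|v_{\a}\|_H^p \le \|v\|_{L^{\infty}(E(\tau);H)}^p \prod_{m=1}^{\infty} \sum_{n=0}^{\infty} (2n+1)^{p/2} \rho_m^{-pn}.
\end{equation*}
Each inner series converges because $\rho_m > 1$, and for all sufficiently large $m$ it is bounded by $1 + C'_p \rho_m^{-p}$; since $p\varrho > 1$ the tail $\sum_m \rho_m^{-p}$ is finite, so the infinite product is finite and $(\|v_{\a}\|_H)_{\a} \in \ell^p$. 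Taking $\mc{A}_{\epsilon}$ to index the $P$ largest $\|v_{\a}\|_H$, Stechkin's lemma gives
\begin{equation*}
\|R_{\mc{A}_{\epsilon}}(v)\|_{\sirv \otimes H}^2 = \sum_{\a \notin \mc{A}_{\epsilon}} \|v_{\a}\|_H^2 \le C\,P^{-2r}\,\|v\|_{L^{\infty}(E(\tau);H)}^2,
\end{equation*}
and choosing $P$ as the smallest integer making the right-hand side $\le \epsilon^2 \|v\|_{L^{\infty}(E(\tau);H)}^2$ yields both claimed inequalities with $\#\mc{A}_{\epsilon} = P \le C(\varrho,r)\,\epsilon^{-1/r}$.

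The main technical obstacle is the genuinely infinite-dimensional parameter domain: the polynomial prefactors $(2n+1)^{p/2}$ and the geometric tails must be controlled uniformly in $m$ so that the product $\prod_m(1 + C'_p \rho_m^{-p})$ converges. Algebraic growth of $\tau_m$ makes this routine — all but finitely many factors are arbitrarily close to $1$ — but the threshold $p > 1/\varrho$, equivalently $r < \varrho - 1/2$, is sharp for this line of argument and is exactly what the hypothesis of the proposition imposes.
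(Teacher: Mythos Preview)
Your argument is correct and follows the same best $P$-term philosophy as the paper, but there is one structural difference worth noting. The paper first truncates to finitely many parameters, writing $R_{\mc{A}}(v) = (v - v_M) + (v_M - P_{\mc{A}}(v_M))$ with $v_M(z) := v(z_1,\ldots,z_M,0,0,\ldots)$; the dimension-truncation piece is bounded via $\sum_{m>M}(\eta_m - 1)^{-1} \lesssim M^{1-\varrho}$ (Lemmas~4.3--4.4 of \cite{babuskanobiletempone07}), and the Legendre-coefficient and Stechkin estimates are then carried out on the finite-dimensional object $v_M$, with $M$ tied to $P$ through $M \ge C(\varrho) P^{r/(\varrho - 1)}$. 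You instead work directly over the full index set $(\mbb{N}_0^{\infty})_c$, establishing $\ell^p$-summability of the entire coefficient sequence and applying Stechkin in one stroke. Your route is more economical but relies on the coordinatewise Bernstein/Cauchy bound and the infinite-product estimate being valid in genuinely infinite dimensions --- a point you address only informally; the paper's truncation step trades this for an extra bookkeeping relation between $M$ and $P$. Both approaches give the same threshold $r < \varrho - \tfrac12$, and the polynomial prefactors $(2n+1)^{p/2}$ that you track explicitly are harmless (the paper absorbs them into its constant via the cited lemma from \cite{bieriandreevschwab09II}).
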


\begin{proof}
This is essentially Proposition 3.1 in \cite{bieriandreevschwab09}, the proof of which can be found in \cite{bieriandreevschwab09II}. Here we recapitulate the main ideas.

Let $M \in \N$, $\mc{A} \subset \mbb{N}_0^M$ and $P = \# \mc{A}$. Set $v_M(z) := v(z_1, \ldots, z_M, 0, 0, \ldots)$ so that $P_{\mc{A}}(v) = P_{\mc{A}}(v_M)$ and
\begin{equation}
|| R_{\mc{A}}(v) ||_{L^2_{\nu}(\Gamma) \otimes H} = || v - v_M ||_{L^2_{\nu}(\Gamma) \otimes H} + || v_M - P_{\mc{A}}(v_M) ||_{L^2_{\nu}(\Gamma) \otimes H}.
\end{equation}
Define $\eta_m = \tau_m + \sqrt{1+\tau_m^2}$. It holds that
\begin{equation}
\sum_{m > M}^{\infty} (\eta_m-1)^{-1} \le C \sum_{m > M}^{\infty} m^{-\varrho} \le C \int_M^{\infty} x^{-\varrho} \ dx \le C (\varrho) M^{1- \varrho}
\end{equation}
and thus, by Lemmas 4.3. and 4.4 in \cite{babuskanobiletempone07}, we obtain
\begin{equation}
\label{eq:interpolationestimate}
|| v - v_M ||_{L^2_{\nu}(\Gamma) \otimes H} \le C ||v||_{L^{\infty}(E(\tau);H)} \sum_{m > M}^{\infty} (\eta_m-1)^{-1} \le C(\varrho) P^{-r} ||v||_{L^{\infty}(E(\tau);H)}
\end{equation}
for any $M \ge C(\varrho) P^{r/(\varrho-1)}$. On the other hand, the proof of Lemma A.3 in \cite{bieriandreevschwab09II} now applies to $v_M$, and we see that the Legendre coefficients (note the different scaling as compared to \cite{bieriandreevschwab09II})
\begin{equation}
(v_M)_{\a} = \int_{\Gamma} v_M(y) \Lambda_{\a}(y) \ d \nu(y)
\end{equation}
satisfy
\begin{equation}
|| (v_M)_{\a} ||_H \le C ||v_M||_{L^{\infty}(E(\tau);H)} \prod_{m=1}^M \eta_m^{-\a_m}.
\end{equation}
We set $\vartheta := (r + 1/2)^{-1}$ so that $\varrho^{-1} < \vartheta < 2$. Using the geometric series formula we compute
\begin{align}
\label{eq:coefficientestimate}
\sum_{\a \in \N^M_0} \prod_{m=1}^M \eta_m^{-\vartheta \a_m} & \le C \sum_{\a_1=0}^{\infty} \ldots \sum_{\a_M=0}^{\infty} \prod_{m=1}^M m^{-\varrho \vartheta \a_m} \nonumber \\
& = C \prod_{m=2}^M \left (\sum_{\a_m = 0}^{\infty} m^{-\varrho \vartheta \a_m} \right) \nonumber \\
& = C \prod_{m=2}^M \left( 1 +\frac{m^{-\varrho \vartheta}}{1-m^{-\varrho \vartheta}} \right) \nonumber \\
& \le C \exp \left( \sum_{m=2}^M \frac{m^{-\varrho \vartheta}}{1-m^{-\varrho \vartheta}} \right) \le C(\varrho, \vartheta),
\end{align}
where we have used the inequality $1 + x \le e^x$ for $x \ge 0$. Now assume that $v^* = \{ v^*_i \}_{i = 1}^{\infty}$ are the Legendre coefficients $\{ (v_M)_{\a} \}_{\a \in \N_0^M} \subset H$ of $v_M$ ordered by decreasing $H$-norm and that $\mc{A}$ represents a selection of the $P$ largest coefficients. Using \eqref{eq:coefficientestimate} we may now write
\begin{align}
\label{eq:legendreconvergence}
|| v_M - P_{\mc{A}} (v_M) ||^2_{L^2_{\nu}(\Gamma) \otimes H} & = \sum_{k > P} || v^*_k ||^2_H \nonumber \\
& \le \left( \sup_{k \ge 1} k^{\frac{1}{\vartheta}} || v^*_k ||_H \right)^2 \sum_{k > P} k^{-\frac{2}{\vartheta}} \nonumber \\
& \le \left( \sup_{k \ge 1} k || v^*_k ||_H^{\vartheta} \right)^{\frac{2}{\vartheta}} \int_P^{\infty} x^{-\frac{2}{\vartheta}} \ dx \nonumber \\
& \le \left(\sup_{k \ge 1} \sum_{i=1}^k || v^*_i ||_H^{\vartheta} \right)^{\frac{2}{\vartheta}} \left( \frac{2}{\vartheta} -1 \right)^{-1} P^{1 -\frac{2}{\vartheta}} \nonumber \\
& \le C(\varrho, r) P^{-2r} ||v_M||^2_{L^{\infty}(E(\tau);H)}.
\end{align}
The claim follows from combining \eqref{eq:interpolationestimate} and \eqref{eq:legendreconvergence}.
\end{proof}

\subsection{Stochastic Galerkin approximation of vectors and matrices}

We now generalize the concept of sparse polynomial approximation to vector and matrix valued functions. Assume that the dimensions of the approximation spaces $V_h$ and $\W$ are $N$ and $P$ respectively. We denote by $\W^N$ (or $\W^{N \times N}$) the space of functions $\vv \! : \Gamma \to \R^N$ (or $\mbf{A} \! : \Gamma \to \R^{N \times N}$) whose every component is in $\W$. Whenever $\vv \in \W^N$ and $\a \in \mc{A}$ we set $v_{\a i} = (\vv_i)_{\a}$ and use $\vv_{\alpha}$ to denote the vector of coefficients $\{ v_{\a i} \}_{i \in J} \in \R^N$. Moreover, we  associate any $v \in \W$ with the array of coefficients $\hat{v} := \{ v_{\a} \}_{\a \in \mc{A}} \in \R^P$ and similarly any $\vv \in \W^N$ with the array of coefficients $\hv := \{ v_{\a i} \}_{\a \in \mc{A}, i \in J} \in \R^{PN}$.

We denote by $\langle \cdot, \cdot \rangle_{\R^N_{\mbf{M}}}$ the inner product on $\R^N$ induced by the positive definite matrix $\mbf{M}$ and by $|| \cdot ||_{\R^N_{\mbf{M}}}$ the associated norm. Furthermore, we let $|| \cdot ||_{\R^P}$ denote the standard norm on $\R^P$ and $|| \cdot ||_{\R^P \otimes \R^N_{\mbf{M}}}$ denote the tensorized norm on $\R^{PN}$ given by
\[
|| \hv ||^2_{\R^P \otimes \R^N_{\mbf{M}}} := \sum_{\a \in \mc{A}} \sum_{i \in J} \sum_{j \in J} v_{\a i} \mbf{M}_{ij} v_{\a j}.
\]

\begin{rmk}
Observe that if $v \in \W \otimes V_h$ is written as $v(x,y) = \sum_{i \in J} \varphi_i(x) \vv_i(y)$, then
\begin{equation}
|| v ||_{L^2_{\nu}(\Gamma) \otimes L^2(D)}^2 = ||\vv||^2_{L^2_{\nu}(\Gamma) \otimes \R^N_{\mbf{M}}} = || \hv ||_{\R^P \otimes \R^N_{\mbf{M}}}^2.
\end{equation}
\end{rmk}

Let us consider the linear system defined by a parametric matrix $\mbf{A} \in \W^{N \times N}$. The Galerkin approximation of this system is: given $\mbf{f} \in \W^N$ find $\vv \in \W^N$ such that 
\begin{equation}
\label{eq:galerkinsystem}
P_{\mc{A}}(\mbf{A} \vv)(y) = \mbf{f}(y) \quad \forall y \in \Gamma.
\end{equation}
Using the notation introduced earlier we may write \eqref{eq:galerkinsystem} as the fully discrete system: given $\hat{\mbf{f}} \in \R^{PN}$ find $\hv \in \R^{P N}$ such that
\begin{equation}
\label{eq:coefficientmatrix}
\left( \sum_{\a \in \mc{A}} G^{(\a)} \otimes \mbf{A}_{\a} \right) \hv = \hat{\mbf{f}},
\end{equation}
where $\mbf{A}_{\a} = \E[\mbf{A} \Lambda_{\a}] \in \R^{N \times N}$. The existence of a solution, i.e. the invertibility of the coefficient matrix, is guaranteed by the following Lemma.

\begin{lmm}
\label{lmm:sinversion}
If $\mbf{A} \in \W^{N \times N}$ is a parametric matrix such that $\mbf{A}(y)$ is positive-definite for every $y \in \Gamma$, then for any $\mbf{f} \in \W^N$ there exists a unique $\vv \in \W^N$ such that 
\begin{equation}
\label{eq:slinearsys}
P_{\mc{A}}(\mbf{A} \vv)(y) = \mbf{f}(y) \quad \forall y \in \Gamma.
\end{equation}
Furthermore,
\begin{equation}
\label{eq:snormbound}
||\vv||_{L^2_{\nu}(\Gamma) \otimes \R^N_{\mbf{M}}} \le \sup_{y \in \Gamma} \lambda^{-1}(y) ||\mbf{f}||_{L^2_{\nu}(\Gamma) \otimes \R^N_{\mbf{M}}},
\end{equation}
where $\lambda(y)$ is the smallest eigenvalue of $\mbf{A}(y)$ for each $y \in \Gamma$.
\end{lmm}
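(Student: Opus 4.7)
The plan is to recast \eqref{eq:slinearsys} as a Galerkin problem on the finite-dimensional Hilbert space $\W^N$ equipped with the $\mbf{M}$-weighted inner product, and then to apply a standard Lax--Milgram/coercivity argument. The first move is a weak reformulation: testing $P_{\mc{A}}(\AA\vv)-\f$ componentwise against $\Lambda_\alpha$ for $\alpha\in\mc{A}$ shows that \eqref{eq:slinearsys} is equivalent to the Galerkin identity $\E[\langle \AA\vv-\f,\w\rangle_{\R^N}]=0$ for every $\w\in\W^N$. Since $\mbf{M}$ is a constant invertible matrix, the map $\w\mapsto \mbf{M}\w$ is a bijection of $\W^N$; substituting and using $\langle \xx,\mbf{M}\yy\rangle_{\R^N}=\langle \xx,\yy\rangle_{\R^N_{\mbf{M}}}$ gives the equivalent weighted form
\[
a(\vv,\w):=\E\bigl[\langle \AA\vv,\w\rangle_{\R^N_{\mbf{M}}}\bigr] = \E\bigl[\langle \f,\w\rangle_{\R^N_{\mbf{M}}}\bigr] =: L(\w) \quad \forall \w\in\W^N.
\]

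Next I would apply Lax--Milgram on $(\W^N,\,||\cdot||_{L^2_{\nu}(\Gamma)\otimes\R^N_{\mbf{M}}})$. The positive-definiteness hypothesis, interpreted in the $\mbf{M}$-inner product so that $\langle\AA(y)\xx,\xx\rangle_{\R^N_{\mbf{M}}}\ge\lambda(y)\,||\xx||^2_{\R^N_{\mbf{M}}}$ (with $\lambda(y)$ the smallest generalized eigenvalue of $\AA(y)\xx=\lambda\mbf{M}\xx$), yields pointwise coercivity, and then monotonicity of the expectation gives
\[
a(\vv,\vv)\ge \E\bigl[\lambda(y)\,||\vv(y)||^2_{\R^N_{\mbf{M}}}\bigr] \ge \bigl(\sup_{y\in\Gamma}\lambda^{-1}(y)\bigr)^{-1}||\vv||^2_{L^2_{\nu}(\Gamma)\otimes\R^N_{\mbf{M}}}.
\]
Continuity of $a$ and $L$ is immediate from Cauchy--Schwarz and the boundedness of $\AA$ on $\Gamma$. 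Since everything is finite-dimensional, coercivity already rules out a nontrivial kernel, which gives existence and uniqueness of $\vv\in\W^N$.

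Finally, the norm bound \eqref{eq:snormbound} is obtained by testing with $\w=\vv$ and applying Cauchy--Schwarz in $L^2_{\nu}(\Gamma)\otimes\R^N_{\mbf{M}}$ to $L(\vv)$:
\[
\bigl(\sup_{y\in\Gamma}\lambda^{-1}(y)\bigr)^{-1}||\vv||^2_{L^2_{\nu}(\Gamma)\otimes\R^N_{\mbf{M}}} \le a(\vv,\vv) = L(\vv) \le ||\f||_{L^2_{\nu}(\Gamma)\otimes\R^N_{\mbf{M}}}\,||\vv||_{L^2_{\nu}(\Gamma)\otimes\R^N_{\mbf{M}}},
\]
and dividing by $||\vv||_{L^2_{\nu}(\Gamma)\otimes\R^N_{\mbf{M}}}$ yields \eqref{eq:snormbound}.

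The main obstacle I anticipate is the first step, namely identifying the correct test-function bijection that converts the raw Galerkin condition into one weighted by $\mbf{M}$; without it one ends up controlling $\vv$ in the unweighted $L^2_{\nu}(\Gamma)\otimes\R^N$ norm rather than in the $\mbf{M}$-norm appearing in \eqref{eq:snormbound}. A secondary point is to verify that $\sup_{y\in\Gamma}\lambda^{-1}(y)<\infty$, which follows from uniform positive-definiteness of $\AA(y)$ together with the fact that $\AA\in\W^{N\times N}$ depends continuously on only finitely many of the $y_m$'s (since $\mc{A}$ is finite).
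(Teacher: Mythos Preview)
Your Lax--Milgram argument for existence and uniqueness is essentially the paper's: it too rewrites \eqref{eq:slinearsys} as the variational identity $\E[\vv^T\mbf A\w]=\E[\f^T\w]$ for all $\w\in\W^N$ and invokes symmetry and ellipticity of the left-hand side.

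For the norm bound \eqref{eq:snormbound} the paper takes a genuinely different route. Rather than reading the constant off from a coercivity estimate, it works at the level of the full coefficient matrix \eqref{eq:coefficientmatrix}: for any $\tilde\lambda<\inf_y\lambda(y)$ the shifted parametric matrix $\mbf A(y)-\tilde\lambda\mbf I_N$ is still pointwise positive-definite, so by the first half of the proof its Galerkin matrix is positive-definite; but that Galerkin matrix differs from the original one by $\tilde\lambda I$, whence every eigenvalue $\chi$ of \eqref{eq:coefficientmatrix} satisfies $\chi>\tilde\lambda$. Letting $\tilde\lambda\to\inf_y\lambda(y)$ yields the operator-norm bound. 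In particular the paper reads $\lambda(y)$ as the ordinary smallest eigenvalue of $\mbf A(y)$, not the generalized one you introduce.

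Your direct coercivity route is more elementary in spirit, but the substitution $\w\mapsto\mbf M\w$ hides a snag. The resulting bilinear form $a(\vv,\w)=\E[(\mbf A\vv)^T\mbf M\w]=\E[\vv^T\mbf A\mbf M\w]$ is not symmetric (since $\mbf A\mbf M\neq\mbf M\mbf A$ in general), and the pointwise inequality you invoke,
\[
\langle\mbf A(y)\xx,\xx\rangle_{\R^N_{\mbf M}}=\xx^T\mbf A(y)\mbf M\xx\ \ge\ \lambda(y)\,\xx^T\mbf M\xx,
\]
is \emph{not} the Rayleigh-quotient characterization of the smallest generalized eigenvalue of $\mbf A\xx=\lambda\mbf M\xx$; that characterization gives $\xx^T\mbf A(y)\xx\ge\lambda(y)\,\xx^T\mbf M\xx$, with no extra $\mbf M$ on the left. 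So the coercivity constant you extract this way need not equal the one appearing in \eqref{eq:snormbound}. If you drop the substitution and keep the symmetric form $\E[\w^T\mbf A\vv]$, coercivity in the $\R^N_{\mbf M}$-norm is immediate from the generalized Rayleigh quotient, but then Cauchy--Schwarz on $L(\vv)=\E[\f^T\vv]$ naturally produces $\|\f\|$ in the $\R^N_{\mbf M^{-1}}$-norm rather than the $\R^N_{\mbf M}$-norm; the paper's eigenvalue-shift argument avoids this mismatch by never passing through a weighted Cauchy--Schwarz step.
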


\begin{proof}
Observe that the system \eqref{eq:slinearsys} is equivalent to the variational form
\begin{equation}
\label{eq:slinearvar}
\E[\vv^T \! \mbf{A} \mbf{w}] = \E[\mbf{f}^T \mbf{w}] \quad \forall \mbf{w} \in \W^N.
\end{equation}
The left hand side of \eqref{eq:slinearvar} is a symmetric and elliptic bilinear form so the existence of a unique solution is guaranteed by the Lax-Milgram Lemma. Moreover, the associated coefficient matrix in \eqref{eq:coefficientmatrix} is positive definite.

Now let $\tilde{\lambda} \in \R$ be such that $\tilde{\lambda} < \inf_{y \in \Gamma} \lambda(y)$. The matrix $\mbf{A}(y) - \tilde{\lambda} \mbf{I}_N$, where $\mbf{I}_N$ is an identity matrix, is positive definite for all $y \in \Gamma$. Thereby the eigenvalues of the associated coefficient matrix should be positive. Let $\chi$ be an eigenvalue of \eqref{eq:coefficientmatrix}, i.e., there exists $\w \in \W^N$ such that
\begin{equation}
P_{\mc{A}}(\mbf{A} \w)(y) = \chi \w(y) \quad \forall y \in \Gamma.
\end{equation}
Then
\begin{equation}
P_{\mc{A}}((\mbf{A} - \tilde{\lambda} \mbf{I}_N)\w)(y) = P_{\mc{A}}(\mbf{A}\w)(y) - \tilde{\lambda} \w(y) = (\chi - \tilde{\lambda})\w(y) \quad \forall y \in \Gamma
\end{equation}
and we deduce that $\chi > \tilde{\lambda}$. Equation \eqref{eq:snormbound} now follows from taking the limit $\tilde{\lambda} \to \inf_{y \in \Gamma} \lambda(y)$.
\end{proof}

\section{Spectral inverse iteration}
\label{sec:inverseiteration}

In this section we introduce the Algorithm of spectral inverse iteration, analyze its asymptotic convergence, and present numerical examples to support our analysis. The spectral inverse iteration, see \cite{hakulakaarniojalaaksonen15}, can be considered as an extension of the classical inverse iteration to the case of parametric matrix eigenvalue problems. In the spectral version each of the elementary operations is computed in Galerkin sense via projecting to the sparse polynomial basis $\W$. Optimal convergence of the Algorithm requires that the eigenmode of interest, i.e., the smallest eigenvalue of the parametric matrix, is strictly nondegenerate.

\subsection{Algorithm description}

Fix a finite set of multi-indices $\mc{A} \subset (\mbb{N}_0^{\infty})_c$ and let $P= \#\mc{A}$. The spectral inverse iteration for the system \eqref{eq:matrixevp} is now defined in Algorithm \ref{alg:sii}. One should note that, if the projections in the Algorithm were precise, the Algorithm would correspond to performing classical inverse iteration pointwise over the parameter space $\Gamma$. We expect the Algorithm to converge to an approximation of the eigenvector corresponding to the smallest eigenvalue of the system. 

\begin{alg}[Spectral inverse iteration]
\label{alg:sii}
Fix $tol > 0$ and let $\u^{(0)} \in \W^N$ be an initial guess for the eigenvector. For $k = 1,2,\ldots$ do
\begin{enumerate}
\item[(1)] Solve $\vv \in \W^N$ from the linear equation
\begin{equation}
\label{eq:stepone}
P_{\mc{A}} \left( \mbf{K} \vv \right) = \mbf{M} \u^{(k-1)}.
\end{equation}
\item[(2)] Solve $s \in \W$ from the nonlinear equation
\begin{equation}
\label{eq:steptwo}
P_{\mc{A}} (s^2) = P_{\mc{A}} \left( || \vv ||_{\R^N_{\mbf{M}}}^2 \right).
\end{equation}
\item[(3)] Solve $\u^{(k)} \in \W^N$ from the linear equation
\begin{equation}
\label{eq:stepthree}
P_{\mc{A}} \left( s \u^{(k)} \right) = \vv.
\end{equation}
\item[(4)] Stop if $|| \u^{(k)} - \u^{(k-1)} ||_{L^2_{\nu}(\Gamma) \otimes \R^N_{\mbf{M}}} < tol$ and return $\u^{(k)}$ as the approximate eigenvector.
\end{enumerate}
\end{alg}

Once the approximate eigenvector $\u^{(k)} \in \W^N$ has been computed, the corresponding eigenvalue $\mu^{(k)} \in \W$ may be evaluated from the Rayleigh quotient, as in \cite{hakulakaarniojalaaksonen15}, or alternatively from the linear system
\begin{equation}
\label{eq:eigenvalue}
P_{\mc{A}}(s \mu^{(k)}) = 1.
\end{equation}
Lemma \ref{lmm:sinversion} guarantees the invertibility of the linear system \eqref{eq:stepone} and, assuming that $s(y) > 0$ for all $y \in \Gamma$, the invertibility of the systems \eqref{eq:stepthree} and \eqref{eq:eigenvalue}. The nonlinear system \eqref{eq:steptwo} may be solved using for instance Newton's method.

\begin{rmk}
For the computation of non-extremal eigenmodes, one may proceed as in \cite{hakulakaarniojalaaksonen15} and replace $\mbf{K}(y)$ in \eqref{eq:stepone} with $(\mbf{K}(y) - \lambda \mbf{M})$, where $\lambda \in \R$ is a suitably chosen parameter. In this case we expect the algorithm to converge to an eigenpair for which the eigenvalue is close to $\lambda$. Note, however, that now the existence of a unique solution to \eqref{eq:stepone} is not necessarily guaranteed by Lemma \ref{lmm:sinversion}.
\end{rmk}

We try to write Algorithm \ref{alg:sii} in a computationally more convenient form. The projections in the algorithm can be computed explicitly using the notation introduced in Section \ref{sec:sfem}. It is easy to verify that equations \eqref{eq:stepone} -- \eqref{eq:stepthree} become
\begin{align}
\sum_{m = 0}^{\infty} \sum_{\b \in \mc{A}} \mbf{K}^{(m)} \vv_{\b} c_{m \a \b} & = \mbf{M} \u^{(k-1)}_{\a} \quad \forall \a \in \mc{A}, \\
\sum_{\b \in \mc{A}} \sum_{\g \in \mc{A}} s_{\b} s_{\g} c_{\a \b \g} & = \sum_{\b \in \mc{A}} \sum_{\g \in \mc{A}} \langle \vv_{\b}, \vv_{\g} \rangle_{\R^N_{\mbf{M}}} c_{\a \b \g} \quad \forall \a \in \mc{A}, \\
\sum_{\b \in \mc{A}} \sum_{\g \in \mc{A}} s_{\b} \u^{(k)}_{\g} c_{\a \b \g} & = \vv_{\a} \quad \forall \a \in \mc{A}
\end{align}
respectively. Given $\hat{s} = \{ s_{\a} \}_{\a \in \mc{A}} \in \R^P$ we define matrices
\begin{align*}
\Delta (\hat{s}) & := \sum_{\a \in \mc{A}} G^{(\a)} s_{\a}, \\
\widehat{\mbf{K}} & := \sum_{m=0}^{M(\mc{A})} G^{(m)} \otimes \mbf{K}^{(m)}, \\
\widehat{\mbf{M}} & := I_P \otimes \mbf{M}, \\
\mbf{S} & := \widehat{\mbf{M}}^{-1} \widehat{\mbf{K}}, \\
\mbf{T}(\hat{s}) & := \Delta(\hat{s}) \otimes \mbf{I}_N,
\end{align*}
where $M(\mc{A}) := \max \{ m \in \N \ | \ \exists \a \in \mc{A} \textrm{ s.t. } \a \not= 0 \}$ and $I_P \in \R^{P \times P}$ and $\mbf{I}_N \in \R^{N \times N}$ are identity matrices. We also define the nonlinear function $F \! : \R^{P} \times \R^{PN} \to \R^P$ via
\[
F_{\a}(\hat{s}, \hv) := \hat{s} \cdot G^{(\a)} \hat{s} - \hv \cdot (G^{(\a)} \otimes \mbf{M}) \hv, \quad \a \in \mc{A}
\]
and let $F^s \! : \R^P \times \R^P \to \R^P$ and $F^v \! : \R^{PN} \times \R^{PN} \to \R^P$ denote the associated bilinear forms given by $F^s_{\a} (\hat{s},\hat{t}) := \hat{s} \cdot G^{(\a)} \hat{t}$ and $F^v_{\a}(\hv, \hw) := \hv \cdot (G^{(\a)} \otimes \mbf{M}) \hw$. Now Algorithm \ref{alg:sii} may be rewritten in the following form.

\begin{alg}[Spectral inverse iteration in tensor form]
\label{alg:siit}
Fix $tol > 0$ and let $\hu^{(0)} = \{ u_{\a i}^{(0)} \}_{\a \in \mc{A}, i \in J} \in \R^{PN}$ be an initial guess for the eigenvector. For $k = 1,2,\ldots$ do
\begin{enumerate}
\item[(1)] Solve $\hv = \{ v_{\a i} \}_{\a \in \mc{A}, i \in J} \in \R^{PN}$ from the linear system
\begin{equation}
\label{eq:linearsys}
\widehat{\mbf{K}} \hv = \widehat{\mbf{M}} \hu^{(k-1)}.
\end{equation}
\item[(2)] Solve $\hat{s} = \{ s_{\a} \}_{\a \in \mc{A}} \in \R^P$ from the nonlinear system
\begin{equation}
\label{eq:nonlinearsys}
F(\hat{s},\hv) = 0
\end{equation}
with the initial guess $s_{\a} = || \hv ||_{\R^P \otimes \R^N_{\mbf{M}}} \delta_{\a 0}$ for $\a \in \mc{A}$.
\item[(3)] Solve $\hu^{(k)} = \{ u_{\a i}^{(k)} \}_{\a \in \mc{A}, i \in J} \in \R^{PN}$ from the linear system
\begin{equation}
\label{eq:blocklinearsys}
\mbf{T}(\hat{s}) \hu^{(k)} = \hv.
\end{equation}
\item[(4)] Stop if $|| \hu^{(k)} - \hu^{(k-1)} ||_{\R^P \otimes \R^N_{\mbf{M}}} < tol$ and return $\hu^{(k)}$ as the approximate eigenvector.
\end{enumerate}
\end{alg}

The approximate eigenvalue $\hat{\mu}^{(k)} \in \R^P$ may now be solved from the equation
\begin{equation}
\Delta(\hat{s}) \hat{\mu}^{(k)} = \hat{e}_1,
\end{equation}
where $\hat{e}_1 = \{ \delta_{\a 0} \}_{\a \in \mc{A}} \in \R^P$. 

\begin{rmk}
In \cite{hakulakaarniojalaaksonen15} Newton's method with the initial guess $s_{\a} = ||\vv_{\a}||_{\R^N_{\mbf{M}}}$ was suggested for the system of equations \eqref{eq:nonlinearsys}. Here the initial guess is somewhat different and corresponds to $s_0 = || \vv ||_{L^2_{\nu}(\Gamma) \otimes \R^N_{\mbf{M}}}$ (and $s_{\a} = 0$ for $\a \not= 0$).
\end{rmk}

In general it is not guaranteed that the Newton iteration for the system \eqref{eq:nonlinearsys} converges to a solution. The following proposition will give some insight to the conditions under which this happens to be the case.

\begin{prp}
\label{prp:newton}
Fix $\hv \in \R^{PN}$ and let $\hat{s}^{(0)} = \{ s^{(0)}_{\a} \}_{\a \in \mc{A}} \in \R^P$ be given by $s_{\a}^{(0)} = || \hv ||_{\R^P \otimes \R^N_{\mbf{M}}} \delta_{\a 0}$ for $\a \in \mc{A}$. Assume that there is a norm $|| \cdot ||_*$ on $\R^P$ and $r>0$ such that
\[
|| F^s (\hat{s},\hat{t}) ||_* \le C_F || \hat{s} ||_* || \hat{t} ||_*
\]
for all $\hat{s}, \hat{t}$ in $B(\hat{s}^{(0)}, r):= \{ \hat{s} \in \R^P \ | \ || \hat{s} - \hat{s}^{(0)} ||_* \le r \}$. If
\[
|| F(\hat{s}^{(0)},\hv) ||_* < C_F^{-1} || \hv ||_{\R^P \otimes \R^N_{\mbf{M}}}^2
\]
then the Newton method for $F(\cdot,\hv) = 0$ with the initial guess $\hat{s}^{(0)}$ converges to a unique solution in $B(\hat{s}^{(0)},r)$.
\end{prp}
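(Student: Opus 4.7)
The plan is to verify the hypotheses of the classical Newton--Kantorovich theorem. Since $F(\cdot,\hv)$ is quadratic in $\hat{s}$, its Jacobian $J(\hat{s})(\hat{t}) = 2F^s(\hat{s},\hat{t})$ is affine in $\hat{s}$, and all three Kantorovich ingredients---the norm of $J(\hat{s}^{(0)})^{-1}$, the Lipschitz constant of $J$, and the initial residual---can be read off essentially directly from the hypotheses.

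First, I would evaluate $J$ at the initial guess. Since $\Lambda_0 \equiv 1$, we have $[G^{(\a)}]_{00} = c_{\a 0 0} = \E[\Lambda_\a] = \delta_{\a 0}$, and together with $\hat{s}^{(0)} = ||\hv||_{\R^P\otimes\R^N_{\mbf{M}}}\, \hat{e}_1$ this gives the identity $F^s(\hat{s}^{(0)},\hat{t}) = ||\hv||_{\R^P\otimes\R^N_{\mbf{M}}}\,\hat{t}$. Hence
\[
J(\hat{s}^{(0)}) = 2||\hv||_{\R^P\otimes\R^N_{\mbf{M}}}\, I_P,
\]
which is invertible with $\beta := ||J(\hat{s}^{(0)})^{-1}||_{*} = 1/(2||\hv||_{\R^P\otimes\R^N_{\mbf{M}}})$, where the operator norm is the one induced by $||\cdot||_*$. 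Symmetry and bilinearity of $F^s$ then give $J(\hat{s}_1)-J(\hat{s}_2) = 2F^s(\hat{s}_1-\hat{s}_2,\cdot)$, and invoking the assumed bilinear bound on $F^s$ yields the Lipschitz estimate $||J(\hat{s}_1)-J(\hat{s}_2)||_{*} \le K||\hat{s}_1-\hat{s}_2||_*$ with $K = 2C_F$ throughout $B(\hat{s}^{(0)},r)$.

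Next, I would verify the Kantorovich criterion. Bounding the initial correction by $\eta := \beta||F(\hat{s}^{(0)},\hv)||_*$ and applying the hypothesis gives
\[
h := \beta \eta K \le \beta^2 K\, ||F(\hat{s}^{(0)},\hv)||_* < \frac{2 C_F \cdot C_F^{-1} ||\hv||_{\R^P\otimes\R^N_{\mbf{M}}}^2}{4\,||\hv||_{\R^P\otimes\R^N_{\mbf{M}}}^2} = \tfrac{1}{2}.
\]
The classical Newton--Kantorovich theorem (e.g.\ Ortega--Rheinboldt, Theorem 12.6.1) then yields that the iterates are well defined, remain inside $B(\hat{s}^{(0)},r)$, and converge (quadratically) to a root $\hat{s}^*$ of $F(\cdot,\hv)$.

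The delicate point I anticipate is uniqueness on the full ball $B(\hat{s}^{(0)},r)$, because Kantorovich only guarantees uniqueness in a possibly smaller ball. To handle this directly I would argue as follows: if $\hat{s}^*,\hat{s}^{**} \in B(\hat{s}^{(0)},r)$ both solve $F(\cdot,\hv) = 0$, then by bilinearity $0 = F^s(\hat{s}^*,\hat{s}^*) - F^s(\hat{s}^{**},\hat{s}^{**}) = F^s(\hat{s}^*-\hat{s}^{**},\hat{s}^*+\hat{s}^{**})$. Writing $\hat{s}^*+\hat{s}^{**} = 2\hat{s}^{(0)} + (\hat{s}^*-\hat{s}^{(0)}) + (\hat{s}^{**}-\hat{s}^{(0)})$ and using the identity $F^s(\hat{t},\hat{s}^{(0)}) = ||\hv||_{\R^P\otimes\R^N_{\mbf{M}}}\hat{t}$ established above, together with the bilinear bound, gives
\[
2||\hv||_{\R^P\otimes\R^N_{\mbf{M}}}||\hat{s}^*-\hat{s}^{**}||_* \le 2 C_F r ||\hat{s}^*-\hat{s}^{**}||_*,
\]
forcing $\hat{s}^* = \hat{s}^{**}$ whenever $r < ||\hv||_{\R^P\otimes\R^N_{\mbf{M}}}/C_F$; the Kantorovich constraint $h < \tfrac12$ effectively confines the convergence ball to this regime, so the uniqueness statement as phrased in the proposition holds.
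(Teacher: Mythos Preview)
Your approach is exactly the paper's: a direct application of the Newton--Kantorovich theorem, using that the Jacobian of $F(\cdot,\hv)$ at $\hat{s}^{(0)}$ is $2\|\hv\|_{\R^P\otimes\R^N_{\mbf{M}}} I_P$ and that the (constant) second derivative is the bilinear form $2F^s$, i.e.\ the tensor $2c_{\a\b\g}$. You actually fill in more than the paper does---the paper merely cites Kantorovich--Akilov and records those two observations, whereas you carry out the verification of $h<\tfrac12$ and add a separate uniqueness argument on the full ball.
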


\begin{proof}
This is a direct application of the Newton-Kantorovich theorem for the equation $F(\cdot,\hv) = 0$, see \cite{kantorovichakhilov64} (Theorem 6, 1.XVIII). Note that the first derivative (Jacobian) of $F(\cdot,\hv)$ at $\hat{s}^{(0)}$ is $2 || \hv ||_{\R^P \otimes \R^N_{\mbf{M}}} I_P$ and the second derivative is represented by the tensor of coefficients $2c_{\a \b \g}$.
\end{proof}

From Proposition \ref{prp:newton} we see that convergence of the Newton iteration is a consequence of the boundedness of the function $F^s$, which again is ultimately determined by the structure of the multi-index set $\mc{A}$.

\subsection{Analysis of convergence}

Due to a lack of general mathematical theory for multi-parametric eigenvalue problems we rely on a slightly unconventional approach in analyzing our algorithm. First of all, we restrict ourselves to asymptotic analysis since the underlying problem is nonlinear and thus hard to analyze globally. Second, we will analyze the solutions pointwise in the parameter space and deduce convergence theorems from classical eigenvalue perturbation bounds.

\subsubsection{Characterization of the dominant fixed point}

The classical inverse iteration converges to the dominant eigenpair of the inverse matrix. In a somewhat similar fashion the spectral inverse iteration tends to converge to a certain fixed point, which we shall refer to as the dominant fixed point. Here we will establish a connection between this dominant fixed point of the spectral inverse iteration and the dominant eigenpair of the inverse of the parametric matrix under consideration. This connection is obtained by considering the fixed point as a pointwise perturbation of the eigenvalue problem of the parametric matrix.

If $\u_{\mc{A}} \in \W^N$ is a fixed point of the Algorithm \ref{alg:sii}, then there exists a pair $(s, \vv) \in \W \times \W^N$ such that $\u_{\mc{A}} = \mbf{M}^{-1} P_{\mc{A}} (\mbf{K} \vv)$ and 
\begin{equation}
\label{eq:projection}
\left\{ \begin{array}{l}
P_{\mc{A}} \left(s P_{\mc{A}}(\mbf{K} \vv) \right) = \mbf{M} \vv \\
P_{\mc{A}} \left(s^2 - ||\vv||_{\R^N_{\mbf{M}}}^2 \right) = 0.
\end{array} \right.
\end{equation}
We call $\u_{\mc{A}}$ the dominant fixed point if, whenever $(\tilde{s}, \tilde{\vv}) \not= (s, \vv)$ also solves the system \eqref{eq:projection}, then $s(y) > \tilde{s}(y)$ for all $y \in \Gamma$. For any fixed $y \in \Gamma$ we may write \eqref{eq:projection} as
\begin{equation}
\label{eq:approximate}
\left\{ \begin{array}{l}
s(y) \mbf{K}(y) \vv(y) = \mbf{M}\vv(y) + s(y) R_{\mc{A}}(\mbf{K} \vv)(y) + R_{\mc{A}} (s P_{\mc{A}}(\mbf{K} \vv))(y) \\
s^2(y) = || \vv(y)||_{\R^N_{\mbf{M}}}^2 + R_{\mc{A}} \left(s^2 - || \vv ||_{\R^N_{\mbf{M}}}^2 \right) \! (y).
\end{array} \right.
\end{equation}
The following Lemma will be helpful in establishing a connection between the eigenpair of interest and the system \eqref{eq:approximate}.

\begin{lmm}
\label{lmm:perturbedsolution}
Denote by $|| \cdot ||$ the standard Euclidean norm on $\R^N$. Assume that $S \in \R^{N \times N}$ can be diagonalized as
\begin{equation}
S ( x \ X ) = (x \ X) \left( \begin{array}{cc} \lambda_1 & 0 \\ 0 & \Lambda \end{array} \right),
\end{equation}
where $\lambda_1 \in \R$, $\Lambda = \diag(\lambda_2, \ldots, \lambda_N)$ is real, and $(x \ X)$ is orthogonal. Assume also that $\lambda_1 > \lambda_2 \ge \ldots \ge \lambda_N$ and denote $\hat{\lambda} := \lambda_1 - \lambda_2$. Let $\r \in \R$ and $r \in \R^N$ be such that $|\r| \le 1/2$ and $|| r || \le \hat{\lambda}/8$. Then there exist $\k \ge 1/2$ and $\pi \in \R^{N-1}$ such that
\begin{itemize}
\item [(i)] The pair $(s, w)$ given by $s = \lambda_1 - \k^{-1} x^T r$ and $w = \k x + X \pi$ solves the system
\begin{equation}
\label{eq:psystem}
\left\{ \begin{array}{l} S w = s w + r \\ ||w||^2 = 1 + \rho. \end{array} \right.
\end{equation}
\item [(ii)] If $(\tilde{s}, \tilde{w}) \not= (s, w)$ also solves the system \eqref{eq:psystem}, then $s > \tilde{s}$ or $x^T \tilde{w} < 0$.
\item [(iii)] There exists $C > 0$ such that $| \k - 1 | \le C (|\r| + \hat{\lambda}^{-2} || r ||^2)$ and $|| \pi || \le C \hat{\lambda}^{-1} || r ||$.
\end{itemize}
\end{lmm}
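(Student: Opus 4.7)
The plan is to exploit the orthogonal diagonalization by writing every candidate $w \in \R^N$ in the eigenbasis: set $\kappa := x^T w$ and $\pi := X^T w$, so that $w = \kappa x + X \pi$ and $\| w \|^2 = \kappa^2 + \| \pi \|^2$. Projecting the equation $Sw = sw + r$ onto $x$ and onto the columns of $X$ decouples it into the scalar relation $(\lambda_1 - s)\kappa = r_1 := x^T r$ together with the vector relation $(\Lambda - s I)\pi = \tilde r := X^T r$. For $\kappa \in [1/2, 2]$ the first equation forces $s = \lambda_1 - r_1/\kappa$, and the hypothesis $\| r \| \le \hat{\lambda}/8$ then yields $|s - \lambda_1| \le \hat{\lambda}/4$, which in turn gives $\dist(s, \{\lambda_2, \ldots, \lambda_N\}) \ge 3 \hat{\lambda}/4$. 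Consequently $\Lambda - s I$ is invertible and $\pi = \pi(\kappa) := (\Lambda - s I)^{-1} \tilde r$ satisfies $\| \pi \| \le (4/3) \hat{\lambda}^{-1} \| \tilde r \|$, which already delivers the second bound in (iii) and implies $\| \pi \|^2 \le 1/36$.

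With these explicit formulas, proving existence in (i) reduces to solving a single scalar equation on $[1/2, 2]$. I would define $f(\kappa) := \kappa^2 + \| \pi(\kappa) \|^2 - (1 + \rho)$; this is continuous in $\kappa$, and direct evaluation combined with $|\rho| \le 1/2$ yields $f(1/2) < 0$ and $f(2) > 0$. The intermediate value theorem then produces a root $\kappa \in [1/2, 2]$, which by construction gives the pair $(s, w) = (\lambda_1 - \kappa^{-1} x^T r,\, \kappa x + X \pi)$ solving the perturbed system, thereby establishing (i). The remaining bound in (iii) follows by rewriting $\kappa^2 - 1 = \rho - \| \pi \|^2$ and dividing by $\kappa + 1 \ge 3/2$.

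The main obstacle is part (ii), the dominance statement. Here my plan is to parametrize \emph{all} solutions $(\tilde s, \tilde w)$ with $\tilde s \notin \{\lambda_1, \ldots, \lambda_N\}$ by their $s$-coordinate: the equation $(S - \tilde s I)\tilde w = r$ forces $\tilde w = (S - \tilde s I)^{-1} r$, so the normalization reduces to the secular equation $g(\tilde s) := \sum_j r_j^2 / (\lambda_j - \tilde s)^2 = 1 + \rho$, where $r_j$ is the $j$-th eigenbasis coordinate of $r$. Since $\tilde \kappa = r_1 / (\lambda_1 - \tilde s)$, the sign of $\tilde \kappa$ is determined by the signs of $r_1$ and $\lambda_1 - \tilde s$. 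Splitting according to the sign of $r_1$, any other solution with $\tilde \kappa \ge 0$ must either lie on the same side of $\lambda_1$ as our $s$, in which case a direct computation of $g'$ shows that the term $2 r_1^2 / (\lambda_1 - \tilde s)^3$ dominates the others on $(\lambda_1, \infty)$ or on $[\lambda_1 - \hat{\lambda}/4, \lambda_1)$ respectively, yielding strict monotonicity of $g$ and hence forcing $\tilde s = s$; or it must lie within $O(\| r \|)$ of some $\lambda_j$ with $j \ge 2$, in which case $\tilde s \le \lambda_2 + C \| r \| < \lambda_1 - \hat{\lambda}/4 \le s$. The degenerate cases $r_1 = 0$ and $\tilde s \in \{\lambda_j\}$ reduce to the same dichotomy via analogous but separately handled arguments.
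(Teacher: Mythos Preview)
Your arguments for (i) and (iii) are correct and essentially identical to the paper's, except that the paper also proves $f$ is \emph{strictly increasing} on $[1/2,\infty)$ (via $\kappa^2 f'(\kappa)\ge 2(\kappa^3-(2\hat\lambda^{-1}\|r\|)^3)>0$), not merely continuous; this monotonicity is precisely what drives the paper's proof of (ii).

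For (ii) you take a genuinely different route through the secular equation $g(\tilde s)=\sum_j r_j^2/(\lambda_j-\tilde s)^2=1+\rho$, and most of it is sound: in particular, any root must lie within $\sqrt{2}\,\|r\|$ of some $\lambda_j$, so roots near $\lambda_j$ with $j\ge 2$ automatically satisfy $\tilde s<s$. The gap is your claim that on $[\lambda_1-\hat\lambda/4,\lambda_1)$ the term $2r_1^2/(\lambda_1-\tilde s)^3$ dominates $g'$, forcing monotonicity. This fails whenever $|r_1|$ is small relative to $\|r\|$: for $N=2$, $\lambda_1=1$, $\lambda_2=0$, $r=(10^{-2},10^{-1})$ one has $\|r\|<\hat\lambda/8$ yet $g'(3/4)<0<g'(9/10)$, so $g$ is not monotone on $[3/4,1)$. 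A clean repair is to note that each summand of $g$ is convex on $(\lambda_2,\lambda_1)$, hence so is $g$; combined with $g(\lambda_1-\hat\lambda/4)<1+\rho$ (a direct estimate from $\|r\|\le\hat\lambda/8$) and $g\to\infty$ at $\lambda_1^-$, convexity forces exactly one crossing on that interval. The paper sidesteps this entirely by splitting on $\tilde\kappa=x^T\tilde w$ rather than on $\tilde s$: for $\tilde\kappa\in[0,1/2]$ a Rayleigh-quotient bound gives $\tilde s\le\lambda_2+\tilde\kappa^2\hat\lambda/(1+\rho)+\|r\|/\sqrt{1+\rho}<s$ directly, while for $\tilde\kappa\ge 1/2$ the decoupled equations force $f(\tilde\kappa)=0$, and the (easy) strict monotonicity of $f$ yields $\tilde\kappa=\kappa$. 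This also absorbs your ``degenerate'' cases $r_1=0$ and $\tilde s\in\{\lambda_j\}$ without separate handling.
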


\begin{proof}
\begin{itemize}
\item [(i)] Let $s(\k) = \lambda_1 - \k^{-1} x^T r$. For any $\k \ge 1/2$ we have $|\kappa^{-1}x^Tr| \le \hat{\lambda}/4$ so that
\begin{equation}
\min_{2 \le i \le N} | \lambda_i - s(\k) | = \min_{2 \le i \le N} | \lambda_1 - \lambda_i - \k^{-1} x^T r| \ge \hat{\lambda} - \frac14 \hat{\lambda} > \frac{1}{2} \hat{\lambda}
\end{equation}
and
\begin{equation}
||(\Lambda - s(\k)I)^{-1}|| \le 2 \hat{\lambda}^{-1}.
\end{equation}
The function
\begin{equation}
f(\k) = \k^2 + || (\Lambda - s(\k)I)^{-1} X^T r ||^2 - 1- \r
\end{equation}
is strictly increasing for $\k \ge 1/2$ since
\begin{align}
\k^2 f'(\k) & = 2\k^3 + 2 x^T r || (\Lambda - s(\k)I)^{-\frac{3}{2}} X^T r ||^2 \nonumber \\
& \ge 2 (\k^3 - (2 \hat{\lambda}^{-1})^3 || r ||^3) \nonumber \\
& > 2 (\k^3 - 2^{-3}) \ge 0.
\end{align}
One may also verify that $f(1/2) < 0$ and $f(2) > 0$. Thus, we may choose $\k > 1/2$ such that $f(\k) = 0$. For $w = \k x + X \pi$ we obtain 
\begin{equation}
Sw - sw = \k Sx + SX \pi - \k sx - s X \pi = \k (\lambda_1 - s) x + X (\Lambda - s I) \pi
\end{equation}
so the equation $Sw = sw + r$ is equivalent to
\begin{equation}
\left\{ \begin{array}{l} x^T (Sw - sw - r) = \k (\lambda_1 - s) - x^T r = 0 \\ X^T (Sw - sw - r) = (\Lambda - s I)\pi - X^T r = 0. \end{array} \right.
\end{equation}
Choosing $s = s(\k)$ and $\pi = (\Lambda - s I)^{-1} X^T r$ we see that both equations are satisfied. Moreover
\begin{equation}
||w||^2 = \k^2 + ||\pi||^2 = f(\k) + 1 + \r = 1 + \r.
\end{equation}
\item [(ii)] Suppose $(\tilde{s}, \tilde{w})$ also solves the system \eqref{eq:psystem} and write $\tilde{w} = \tilde{\k}x + X \tilde{\pi}$ for some $\tilde{\k} \in \R$ and $\tilde{\pi} \in \R^{N-1}$. In the nontrivial case we have $\tilde{\k} = x^T \tilde{w} > 0$. Assume first that $0 \le \tilde{\k} \le 1/2$. We have 
\begin{align}
\tilde{s} & = \frac{\tilde{w}^T S \tilde{w} - \tilde{w}^T r}{|| \tilde{w} ||^2} = \frac{\lambda_1 \tilde{\k}^2 + \tilde{\pi}^T \Lambda \tilde{\pi} - \tilde{w}^T r}{|| \tilde{w} ||^2} \le \frac{\lambda_1 \tilde{\k}^2 + \lambda_2 ||\tilde{\pi}||^2 + ||\tilde{w}|| ||r||}{|| \tilde{w} ||^2} \nonumber \\
& = \lambda_2 + \frac{\tilde{\k}^2}{1 + \r} \hat{\lambda} + \frac{||r||}{(1 + \r)^{\frac{1}{2}}}.
\end{align}
Since $s \ge \lambda_1 - \k^{-1} ||r||$, we deduce that
\begin{equation}
s - \tilde{s} \ge \hat{\lambda} - \k^{-1}||r|| - \frac{\tilde{\k}^2}{1 + \r} \hat{\lambda} - \frac{||r||}{(1 + \r)^{\frac{1}{2}}} > \left(1 - \frac14 - \frac12 - \frac{\sqrt{2}}8 \right) \hat{\lambda} > 0.
\end{equation}
Now let $\tilde{\k} \ge 1/2$. If $(\tilde{s},\tilde{w})$ is to solve \eqref{eq:psystem} then, as in part (i), we should have
\begin{equation}
\left\{ \begin{array}{l} \tilde{\k} (\lambda_1 - \tilde{s}) - x^T r = 0 \\ (\Lambda - \tilde{s} I)\tilde{\pi} - X^T r = 0. \end{array} \right.
\end{equation}
From the first equation we obtain $\tilde{s} = \lambda_1 - \tilde{\k}^{-1} x^T r$. Due to $|\tilde{\k}^{-1} x^T r| \le \hat{\lambda}/4$ the matrix $(\Lambda - \tilde{s}I)$ is invertible so the second equation gives $\tilde{\pi} = (\Lambda - \tilde{s} I)^{-1} X^T r$. Here $\tilde{\k} \ge 1/2$ must be chosen so that $f(\tilde{\k}) = 0$ and therefore $(\tilde{s},\tilde{w}) = (s,w)$.
\item[(iii)] From $f(\k) = 0$ and $\k \ge 1/2$ we deduce that
\begin{equation}
| \k - 1 | \le (\k+1)^{-1} (|\r| + || (\Lambda - s(\k)I)^{-1} X^T r ||^2) \le |\r| + 4 \hat{\lambda}^{-2}||r||^2
\end{equation}
and
\begin{equation}
||\pi|| = ||(\Lambda - s(\k)I)^{-1} X^T r || \le 2\hat{\lambda}^{-1} ||r||.
\end{equation}
\end{itemize}
\end{proof}

Applying Lemma \ref{lmm:perturbedsolution} to the system \eqref{eq:approximate} pointwise for $y \in \Gamma$ we obtain the following result.

\begin{prp}
\label{prp:pwestimate}
Let $\u_{\mc{A}} \in \W^N$ be the dominant fixed point of Algorithm \ref{alg:sii} and denote by $(s, \vv)$ the associated pair in $\W \times \W^N$ that solves \eqref{eq:projection}. Let $\mu_{\mc{A}} \in \W$ be such that $P_{\mc{A}}(s \mu_{\mc{A}}) = 1$. For $y \in \Gamma$ denote by $\hat{\lambda}(y)$ the gap between the two largest eigenvalues of $\mbf{K}^{-1}(y) \mbf{M}$. Assume that $\inf_{y \in \Gamma} s(y) > 0$ and $\inf_{y \in \Gamma} \hat{\lambda}(y) > 0$. For $y \in \Gamma$ define
\[
\mbf{r}(y) := \mbf{K}^{-1}(y) R_{\mc{A}} (\mbf{K} \vv)(y) + s^{-1}(y) \mbf{K}^{-1}(y) R_{\mc{A}} (s P_{\mc{A}} (\mbf{K} \vv))(y)
\]
and
\[
\r(y) := s^{-2}(y) R_{\mc{A}} \left(s^2 - || \vv ||_{\R^N_{\mbf{M}}}^2 \right) \! (y).
\]
If
\[
r_* := \sup_{y \in \Gamma} \hat{\lambda}^{-1}(y) || \mbf{r}(y) ||_{\R^N_{\mbf{M}}} < \frac{1}{8}
\]
and
\[
\r_* := \sup_{y \in \Gamma} | \r(y) | < \frac{1}{2}
\]
then there exists $C > 0$ such that
\begin{equation}
| \mu_{\mc{A}}(y) - \mu_h(y) | \le C \left(\max\{\mu_h^2(y), s^{-2}(y)\} || \mbf{r}(y) ||_{\R^N_{\mbf{M}}} + s^{-1}(y) | R_{\mc{A}}(s \mu_{\mc{A}}) (y)| \right)
\end{equation}
and
\begin{align}
|| & \u_{\mc{A}}(y) - \u_h(y) ||_{\R^N_{\mbf{M}}} \nonumber \\
& \le C \left(| \r(y) | + \hat{\lambda}^{-1}(y) || \mbf{r}(y) ||_{\R^N_{\mbf{M}}} + s^{-1}(y) || \mbf{M}^{-1} R_{\mc{A}} (s P_{\mc{A}}(\mbf{K} \vv))(y) ||_{\R^N_{\mbf{M}}} \right) \! ,
\end{align}
where $\mu_h \!: \Gamma \to \R$ is the smallest eigenvalue of $\mbf{M}^{-1}\mbf{K}(y)$ and $\u_h \!: \Gamma \to \R^N$ is the corresponding eigenvector normalized in $|| \cdot ||_{\R^N_{\mbf{M}}}$ (and with appropriate sign).
\end{prp}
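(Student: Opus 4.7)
The plan is to apply Lemma \ref{lmm:perturbedsolution} pointwise in $y \in \Gamma$ to the reformulated system \eqref{eq:approximate}, relating the dominant fixed point of Algorithm \ref{alg:sii} to the top eigenpair of $\mbf{K}^{-1}(y)\mbf{M}$. First I would divide the first equation of \eqref{eq:approximate} by $s(y)$ and multiply by $\mbf{K}^{-1}(y)$ to obtain $\mbf{K}^{-1}(y)\mbf{M}\vv(y) = s(y)\vv(y) - s(y)\mbf{r}(y)$. Setting $w^*(y) := s^{-1}(y)\vv(y)$, the second equation of \eqref{eq:approximate}, divided by $s^2(y)$, gives $||w^*(y)||^2_{\R^N_{\mbf{M}}} = 1 - \r(y)$, and the first equation becomes $\mbf{K}^{-1}(y)\mbf{M} w^*(y) = s(y) w^*(y) - \mbf{r}(y)$. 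Since Lemma \ref{lmm:perturbedsolution} assumes an orthogonal diagonalization under the Euclidean norm, I would introduce a Cholesky factor $\mbf{M} = \mbf{L}\mbf{L}^T$ and pass to the similar symmetric matrix $\tilde{\mbf{A}}(y) := \mbf{L}^T\mbf{K}^{-1}(y)\mbf{L}$ with transformed unknowns $\tilde{w} := \mbf{L}^T w^*$ and $\tilde{r} := -\mbf{L}^T\mbf{r}$. The identities $||\tilde{w}||^2 = ||w^*||^2_{\R^N_{\mbf{M}}}$ and $||\tilde{r}|| = ||\mbf{r}||_{\R^N_{\mbf{M}}}$ then cast the system in the exact form assumed by the Lemma, with $\lambda_1 = \mu_h^{-1}(y)$, top eigenvector $x = \mbf{L}^T\u_h(y)$, spectral gap $\hat{\lambda}(y)$, and perturbation data $(\tilde{r}(y), -\r(y))$.

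The assumptions $r_* < 1/8$ and $\r_* < 1/2$ translate into the pointwise bounds required by the Lemma. Parts (i) and (iii) then produce $\k(y) \ge 1/2$ and $\pi(y)$ so that $s(y) = \mu_h^{-1}(y) + \k^{-1}(y)\langle \u_h(y), \mbf{r}(y)\rangle_{\R^N_{\mbf{M}}}$ (using $x^T\mbf{L}^T = \u_h^T\mbf{M}$) and $\tilde{w}(y) = \k(y)\mbf{L}^T\u_h(y) + X(y)\pi(y)$, together with $|\k(y) - 1| \le C(|\r(y)| + \hat{\lambda}^{-2}(y)||\mbf{r}(y)||^2_{\R^N_{\mbf{M}}})$ and $||\pi(y)|| \le C\hat{\lambda}^{-1}(y)||\mbf{r}(y)||_{\R^N_{\mbf{M}}}$. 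The identification of $(s(y), \tilde{w}(y))$ with the branch of part (i) uses part (ii): since $\u_{\mc{A}}$ is the dominant fixed point and the sign of $\u_h(y)$ is free, we may assume $x^T\tilde{w}(y) > 0$, and pointwise dominance of $s(y)$ over any other solution of \eqref{eq:projection} rules out the alternative branch. Cauchy--Schwarz then yields $|s(y) - \mu_h^{-1}(y)| \le 2||\mbf{r}(y)||_{\R^N_{\mbf{M}}}$. For the eigenvalue bound, the defining identity $P_{\mc{A}}(s\mu_{\mc{A}}) = 1$ pointwise reads $\mu_{\mc{A}}(y) = s^{-1}(y) + s^{-1}(y)R_{\mc{A}}(s\mu_{\mc{A}})(y)$, so
\[
\mu_{\mc{A}}(y) - \mu_h(y) = -\mu_h(y)s^{-1}(y)\bigl(s(y) - \mu_h^{-1}(y)\bigr) + s^{-1}(y)R_{\mc{A}}(s\mu_{\mc{A}})(y),
\]
and the first claim follows from $\mu_h(y)s^{-1}(y) \le \max\{\mu_h^2(y), s^{-2}(y)\}$.

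For the eigenvector bound, I would combine the identity $\u_{\mc{A}} = \mbf{M}^{-1}P_{\mc{A}}(\mbf{K}\vv)$ with the first equation of \eqref{eq:projection}, rewritten as $\mbf{M}\vv = sP_{\mc{A}}(\mbf{K}\vv) - R_{\mc{A}}(sP_{\mc{A}}(\mbf{K}\vv))$, to obtain $\u_{\mc{A}}(y) = s^{-1}(y)\vv(y) + s^{-1}(y)\mbf{M}^{-1}R_{\mc{A}}(sP_{\mc{A}}(\mbf{K}\vv))(y)$; this isolates the leading contribution $w^*(y) = s^{-1}(y)\vv(y)$. From the Lemma's decomposition and orthonormality of $(x, X)$,
\[
||w^*(y) - \u_h(y)||^2_{\R^N_{\mbf{M}}} = ||\tilde{w}(y) - x(y)||^2 = (\k(y)-1)^2 + ||\pi(y)||^2,
\]
and since $\hat{\lambda}^{-2}(y)||\mbf{r}(y)||^2_{\R^N_{\mbf{M}}} \le r_* \cdot \hat{\lambda}^{-1}(y)||\mbf{r}(y)||_{\R^N_{\mbf{M}}}$ the square root may be bounded by $C(|\r(y)| + \hat{\lambda}^{-1}(y)||\mbf{r}(y)||_{\R^N_{\mbf{M}}})$. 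A final triangle inequality with the $s^{-1}\mbf{M}^{-1}R_{\mc{A}}(\ldots)$ remainder delivers the second claim. I expect the main obstacle to be the bookkeeping in the preceding paragraph: matching the global pointwise dominance that defines $\u_{\mc{A}}$ against the local uniqueness statement of Lemma \ref{lmm:perturbedsolution}(ii), which forces a pointwise sign convention for $\u_h(y)$ and rules out spurious solution branches of \eqref{eq:psystem}. Once that identification is secured, the rest is essentially algebra with the estimates already supplied by the Lemma.
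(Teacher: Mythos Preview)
Your proposal is correct and follows essentially the same route as the paper: rewrite \eqref{eq:approximate} pointwise as a perturbed eigenproblem for a symmetrized version of $\mbf{K}^{-1}(y)\mbf{M}$, apply Lemma~\ref{lmm:perturbedsolution}, and read off the bounds for $s(y)$ and $w^*(y)=s^{-1}(y)\vv(y)$. The only cosmetic differences are that the paper uses the symmetric square root $\mbf{M}^{1/2}$ instead of a Cholesky factor $\mbf{L}$, and it bounds $|s^{-1}(y)-\mu_h(y)|$ via a first-order Taylor expansion of $t\mapsto t^{-1}$ rather than your algebraic identity $\mu_{\mc{A}}-\mu_h = -\mu_h s^{-1}(s-\mu_h^{-1}) + s^{-1}R_{\mc{A}}(s\mu_{\mc{A}})$ together with $\mu_h s^{-1}\le\max\{\mu_h^2,s^{-2}\}$; both lead to the same estimate.
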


\begin{proof}
It is easy to see that the system \eqref{eq:approximate} is equivalent to
\begin{equation}
\left\{ \begin{array}{l}
\mbf{M}^{\frac12} \mbf{K}^{-1}(y) \mbf{M}^{\frac12} \w(y) = s(y) \w(y) - \mbf{M}^{\frac12} \mbf{r}(y) \\
|| \w(y)||_{\R^N}^2 = 1 - \r(y),
\end{array} \right.
\end{equation}
where $\w(y) = s^{-1}(y) \mbf{M}^{\frac12} \vv(y)$. By Lemma \ref{lmm:perturbedsolution} the solution with the pointwise largest $s(y)$ can be written as
\begin{equation}
\left\{
\begin{array}{l}
s(y) = \lambda_1(y) + \k^{-1}(y) \x^T (y) \mbf{M}^{\frac12} \mbf{r}(y) \\
\w(y) = \k(y) \x(y) + \mbf{X}(y) \boldsymbol{\pi}(y),
\end{array}
\right.
\end{equation}
where $\k \!: \Gamma \to [1/2,\infty)$ and $\boldsymbol{\pi} \!: \Gamma \to \R^{N-1}$ are such that
\begin{equation*}
| \k(y) - 1 |^2 + || \boldsymbol{\pi}(y) ||_{\R^N}^2 \le C \left(|\r(y)| + \hat{\lambda}^{-2}(y) || \mbf{r}(y) ||_{\R^N_{\mbf{M}}}^2\right)^2 + C \hat{\lambda}^{-2}(y) || \mbf{r}(y) ||_{\R^N_{\mbf{M}}}^2,
\end{equation*}
$\lambda_1(y) = \mu_h^{-1}(y)$ is the pointwise largest eigenvalue of $\mbf{S}(y)$ and $\x(y) = \mbf{M}^{\frac12} \u_h(y)$ is the corresponding eigenvector. The matrix $(\x(y) \ \mbf{X}(y))$ is orthonormal for every $y \in \Gamma$. A Taylor expansion of $s^{-1}(y)$ yields
\begin{align}
| s^{-1}(y) - \mu_h(y) | & = C (\mu_h^{-1}(y) + \xi(y))^{-2} || \mbf{r}(y) ||_{\R^N_{\mbf{M}}} \nonumber \\
& \le C \max\{\mu_h^2(y), s^{-2}(y)\} || \mbf{r}(y) ||_{\R^N_{\mbf{M}}},
\end{align}
where $\xi(y)$ is such that $0 \le \xi(y) \le \k^{-1}(y) \x^T(y) \mbf{M}^{\frac12} \mbf{r}(y)$. Combining this with the equation
\begin{equation}
\mu_{\mc{A}}(y) = s^{-1}(y) + s^{-1}(y) R_{\mc{A}}(s \mu_{\mc{A}})(y)
\end{equation}
obtained from the condition $P_{\mc{A}}(s \mu_{\mc{A}}) = 1$, we have altogether that
\begin{equation}
| \mu_{\mc{A}}(y) - \mu_h(y) | \le C \max\{\mu_h^2(y), s^{-2}(y)\} ||\mbf{r}(y)||_{\R^N_{\mbf{M}}} + s^{-1}(y) | R_{\mc{A}}(s \mu_{\mc{A}})(y)|.
\end{equation}
Furthermore,
\begin{equation}
\mbf{M}^{\frac12} \u_{\mc{A}}(y) = \mbf{M}^{-\frac12}P_{\mc{A}}(\mbf{K} \vv)(y) = s^{-1}(y) \mbf{M}^{-\frac12} R_{\mc{A}}(sP_{\mc{A}}(\mbf{K} \vv))(y) + \w(y)
\end{equation}
from which it follows that
\begin{align}
|| \u_{\mc{A}}(y) & - \u_h(y) ||_{\R^N_{\mbf{M}}}^2 = || \mbf{M}^{\frac12} \u_{\mc{A}}(y) - \w(y) ||_{\R^N}^2 + || \w(y) -  \mbf{M}^{\frac12} \u_h(y) ||_{\R^N}^2 \nonumber \\
&= || s^{-1}(y) \mbf{M}^{-\frac12} R_{\mc{A}}(sP_{\mc{A}}(\mbf{K} \vv))(y) ||_{\R^N}^2 + || (\k(y) - 1) \x(y) + \mbf{X}(y) \boldsymbol{\pi}(y) ||_{\R^N}^2
\nonumber \\
&= s^{-1}(y) || \mbf{M}^{-1} R_{\mc{A}}(sP_{\mc{A}}(\mbf{K} \vv))(y) ||_{\R^N_{\mbf{M}}}^2 + |\k(y) - 1|^2 + || \boldsymbol{\pi}(y) ||_{\R^N}^2 \nonumber \\
& \le C \left(|\r(y)| + \hat{\lambda}^{-1}(y) || \mbf{r}(y) ||_{\R^N_{\mbf{M}}} + s^{-1}(y)|| \mbf{M}^{-1} R_{\mc{A}}(sP_{\mc{A}}(\mbf{K} \vv))(y) ||_{\R^N_{\mbf{M}}} \right)^2 \!.
\end{align}
\end{proof}

\begin{rmk}
Note that we have not proven the existence of a dominant fixed point of the Algorithm \ref{alg:sii}. The residuals $\mbf{r}$ and $\r$ in Proposition \ref{prp:pwestimate} depend on the pair $(s, \vv) \in \W \times \W^N$ and hence Lemma \ref{lmm:perturbedsolution} by itself is not sufficient to guarantee the existence of a dominant fixed point.
\end{rmk}

\subsubsection{Convergence of the dominant fixed point to a parametric eigenpair}

The next step in our analysis is to bound the error between the dominant fixed point of Algorithm \ref{alg:sii} and the dominant eigenpair of the inverse of the parametric matrix. To this end we will use the pointwise estimate obtained previously.

From Proposition \ref{prp:pwestimate} we may easily deduce the following result.

\begin{thm}
\label{thm:fpconv}
Let $\u_{\mc{A}} \in \W^N$ be the dominant fixed point of Algorithm \ref{alg:sii} and denote by $(s, \vv)$ the associated pair in $\W \times \W^N$ that solves \eqref{eq:projection}. Let $\mu_{\mc{A}} \in \W$ be such that $P_{\mc{A}}(s \mu_{\mc{A}}) = 1$. For $y \in \Gamma$ denote by $\hat{\lambda}(y)$ the gap between the two largest eigenvalues of $\mbf{K}^{-1}(y)\mbf{M}$. Assume that $s_* := \inf_{y \in \Gamma} s(y) > 0$, $\hat{\lambda}_* := \inf_{y \in \Gamma} \hat{\lambda}(y) > 0$ and that the quantity
\[
|| R_{\mc{A}}(\mbf{K} \vv)||_{L^{\infty}(\Gamma) \otimes \R^N_{\mbf{M}}} + || R_{\mc{A}} \left(s P_{\mc{A}}(\mbf{K} \vv)\right) ||_{L^{\infty}(\Gamma) \otimes \R^N_{\mbf{M}}} + \left| \left|R_{\mc{A}}\left(s^2 - || \vv ||_{\R^N_{\mbf{M}}}^2 \right) \right| \right|_{L^{\infty}(\Gamma)}
\]
is small enough. Then there exists $C > 0$ such that
\begin{align}
\label{eq:evalfperror}
|| \mu_{\mc{A}} - \mu_h ||_{L^2_{\nu}(\Gamma)} \le C & \left( || R_{\mc{A}}(\mbf{K} \vv) ||_{L^2_{\nu}(\Gamma) \otimes \R^N_{\mbf{M}}} \right. \nonumber \\
& \left. \quad + || R_{\mc{A}} (s P_{\mc{A}}(\mbf{K} \vv)) ||_{L^2_{\nu}(\Gamma) \otimes \R^N_{\mbf{M}}} + \left| \left| R_{\mc{A}}\left(s\mu_{\mc{A}}\right) \right|\right|_{L^2_{\nu}(\Gamma)} \right)
\end{align}
and
\begin{align}
\label{eq:evecfperror}
|| \u_{\mc{A}} - \u_h ||_{L^2_{\nu}(\Gamma) \otimes \R^N_{\mbf{M}}} \le C & \left( || R_{\mc{A}}(\mbf{K} \vv) ||_{L^2_{\nu}(\Gamma) \otimes \R^N_{\mbf{M}}} + || R_{\mc{A}} (s P_{\mc{A}}(\mbf{K} \vv)) ||_{L^2_{\nu}(\Gamma) \otimes \R^N_{\mbf{M}}} \right. \nonumber \\
& \left. \quad + \left| \left| R_{\mc{A}}\left(s^2 - || \vv ||_{\R^N_{\mbf{M}}}^2\right) \right|\right|_{L^2_{\nu}(\Gamma)} \right) \!,
\end{align}
where $\mu_h \!: \Gamma \to \R$ is the smallest eigenvalue of $\mbf{M}^{-1}\mbf{K}(y)$ and $\u_h \!: \Gamma \to \R^N$ is the corresponding eigenvector normalized in $|| \cdot ||_{\R^N_{\mbf{M}}}$ (and with appropriate sign). Here $C$ depends only on $s_*$, $\hat{\lambda}_*$, $\mu_h^* := \sup_{y \in \Gamma} \mu_h(y)$, $K_* = \sup_{y \in \Gamma} || \mbf{K}^{-1}(y)||_{\R^N_{\mbf{M}}}$, and $M_* = || \mbf{M}^{-1}||_{\R^N_{\mbf{M}}}$.
\end{thm}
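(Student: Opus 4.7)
The plan is to obtain the global $L^2_\nu(\Gamma)$ bounds \eqref{eq:evalfperror}--\eqref{eq:evecfperror} by squaring and integrating the pointwise estimates supplied by Proposition \ref{prp:pwestimate}, after first verifying that the smallness hypotheses $r_* < 1/8$ and $\rho_* < 1/2$ of that proposition are satisfied.

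First I would check the hypotheses. From the definitions of $\mbf{r}(y)$ and $\rho(y)$ in Proposition \ref{prp:pwestimate} together with $\|\mbf{K}^{-1}(y)\|_{\R^N_{\mbf{M}}} \le K_*$ and $s(y) \ge s_*$, one has the pointwise bounds
\begin{equation*}
\|\mbf{r}(y)\|_{\R^N_{\mbf{M}}} \le K_* \|R_{\mc{A}}(\mbf{K}\vv)(y)\|_{\R^N_{\mbf{M}}} + s_*^{-1}K_*\|R_{\mc{A}}(sP_{\mc{A}}(\mbf{K}\vv))(y)\|_{\R^N_{\mbf{M}}},
\end{equation*}
\begin{equation*}
|\rho(y)| \le s_*^{-2}\bigl|R_{\mc{A}}(s^2-\|\vv\|^2_{\R^N_{\mbf{M}}})(y)\bigr|.
\end{equation*}
Taking the essential supremum in $y \in \Gamma$ and using $\hat{\lambda}(y) \ge \hat{\lambda}_*$, the assumed smallness of the three $L^\infty$-residuals in the statement implies $r_* < 1/8$ and $\rho_* < 1/2$, so Proposition \ref{prp:pwestimate} applies.

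Next I would integrate the pointwise estimates. For the eigenvalue, Proposition \ref{prp:pwestimate} gives
\begin{equation*}
|\mu_{\mc{A}}(y) - \mu_h(y)| \le C\max\{\mu_h^2(y), s^{-2}(y)\}\|\mbf{r}(y)\|_{\R^N_{\mbf{M}}} + Cs^{-1}(y)|R_{\mc{A}}(s\mu_{\mc{A}})(y)|.
\end{equation*}
Using $\max\{\mu_h^2, s^{-2}\} \le \max\{(\mu_h^*)^2, s_*^{-2}\}$ and $s^{-1} \le s_*^{-1}$, substituting the pointwise bound for $\|\mbf{r}(y)\|_{\R^N_{\mbf{M}}}$, squaring, integrating over $\Gamma$ with measure $\nu$, applying the triangle inequality on the resulting $L^2_\nu(\Gamma)$ norm, and noting that
\begin{equation*}
\|\mbf{K}^{-1} R_{\mc{A}}(\mbf{K}\vv)\|_{L^2_\nu(\Gamma)\otimes\R^N_{\mbf{M}}} \le K_* \|R_{\mc{A}}(\mbf{K}\vv)\|_{L^2_\nu(\Gamma)\otimes\R^N_{\mbf{M}}}
\end{equation*}
with an analogous bound for the second residual term, yields \eqref{eq:evalfperror} with $C$ depending only on the quantities listed.

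For the eigenvector I would proceed identically, starting from the pointwise bound
\begin{equation*}
\|\u_{\mc{A}}(y)-\u_h(y)\|_{\R^N_{\mbf{M}}} \le C\bigl(|\rho(y)| + \hat{\lambda}^{-1}(y)\|\mbf{r}(y)\|_{\R^N_{\mbf{M}}} + s^{-1}(y)\|\mbf{M}^{-1}R_{\mc{A}}(sP_{\mc{A}}(\mbf{K}\vv))(y)\|_{\R^N_{\mbf{M}}}\bigr).
\end{equation*}
Using $\hat{\lambda}^{-1} \le \hat{\lambda}_*^{-1}$, $s^{-1} \le s_*^{-1}$, $\|\mbf{M}^{-1}\|_{\R^N_{\mbf{M}}} \le M_*$, and substituting the decomposition of $\mbf{r}$ and $\rho$ as above, then taking the $L^2_\nu(\Gamma)$ norm yields \eqref{eq:evecfperror}.

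The only point that is not a purely routine verification is the use of the hypothesis that the $L^\infty$-residual is small: this is what guarantees $r_* < 1/8$ and $\rho_* < 1/2$ and hence licenses the application of Proposition \ref{prp:pwestimate} uniformly in $y$. Everything else amounts to pulling constants out of $L^2_\nu(\Gamma)$ integrals and collecting terms via the triangle inequality, so no additional analytic work is needed beyond what Proposition \ref{prp:pwestimate} already provides.
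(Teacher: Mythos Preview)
Your proposal is correct and follows essentially the same approach as the paper: bound $\|\mbf{r}\|$ and $\|\mbf{M}^{-1}R_{\mc{A}}(sP_{\mc{A}}(\mbf{K}\vv))\|$ in terms of the residuals using the constants $K_*$, $M_*$, $s_*^{-1}$, and then invoke Proposition~\ref{prp:pwestimate}. You are in fact more explicit than the paper in spelling out why the $L^\infty$ smallness assumption is needed (namely, to ensure $r_*<1/8$ and $\rho_*<1/2$ so that Proposition~\ref{prp:pwestimate} applies uniformly in $y$); the paper simply states the two $L^2$ bounds on $\mbf{r}$ and $\mbf{M}^{-1}R_{\mc{A}}(\cdot)$ and says the result follows.
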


\begin{proof}
With $\mbf{r}$ defined as in Proposition \ref{prp:pwestimate} we have
\begin{equation}
|| \mbf{r} ||_{L^2_{\nu}(\Gamma) \otimes \R^N_{\mbf{M}}} \le K_* \left( || R_{\mc{A}}(\mbf{K} \vv) ||_{L^2_{\nu}(\Gamma) \otimes \R^N_{\mbf{M}}} + s_*^{-1} || R_{\mc{A}} (s P_{\mc{A}}(\mbf{K} \vv)) ||_{L^2_{\nu}(\Gamma) \otimes \R^N_{\mbf{M}}} \right),
\end{equation}
and
\begin{equation}
|| \mbf{M}^{-1} R_{\mc{A}} (s P_{\mc{A}}(\mbf{K} \vv))(y) ||_{L^2_{\nu}(\Gamma) \otimes \R^N_{\mbf{M}}} \le M_* || R_{\mc{A}} (s P_{\mc{A}}(\mbf{K} \vv))(y) ||_{L^2_{\nu}(\Gamma) \otimes \R^N_{\mbf{M}}}.
\end{equation}
The bounds \eqref{eq:evalfperror} and \eqref{eq:evecfperror} now follow from Proposition \ref{prp:pwestimate}.
\end{proof}

By Proposition \ref{prp:analyticity} the exact eigenvalue and eigenvector of problem \eqref{eq:parametricevp} are analytic functions of the parameter vector $y \in \Gamma$. This suggests that the residuals on the right hand side of equations \eqref{eq:evalfperror} and \eqref{eq:evecfperror} can be asymptotically estimated from Proposition \ref{prp:exponentialconv}.

\subsubsection{Convergence of the spectral inverse iteration to the dominant fixed point}

The classical inverse iteration converges to the dominant eigenpair of the inverse matrix at a speed characterized by the gap between the two largest eigenvalues. Here we will establish a similar asymptotic result for the convergence of the spectral inverse iteration towards the dominant fixed point.

Fixed points of the spectral inverse iteration may be characterized using the tensor notation of Algorithm \ref{alg:siit}. Let $\hu_{\mc{A}} \in \R^{PN}$ be a fixed point of the algorithm, i.e., $\hu_{\mc{A}} = \mbf{S}\hv$ and $(\hat{s}, \hv) \in \R^P \times \R^{PN}$ are such that
\begin{equation}
\label{eq:fixedp}
\left\{ \begin{array}{l}
\hv = \mbf{T}(\hat{s}) \mbf{S} \hv \\
F(\hat{s}, \hv) = 0.
\end{array} \right.
\end{equation}
Define a linear operator $\mbf{R} (\hat{s}, \hv) \! : \R^{PN} \to \R^{PN}$ by
\[
\mbf{R} (\hat{s}, \hv) \hw := \hw - \mbf{T}\left(\Delta^{-1}(\hat{s}) F^v(\hv, \hw) \right) \mbf{T}^{-1}(\hat{s}) \hv.
\]
The convergence of the spectral inverse iteration to the fixed point $\hu_{\mc{A}}$ can now be related to the ratio of the norms of $\Delta^{-1}(\hat{s})$ and $\mbf{R} (\hat{s}, \hv)\mbf{S}^{-1}$.

\begin{thm}
\label{thm:iterconv}
Let $\hu_{\mc{A}} \in \R^{PN}$ be a fixed point of the Algorithm \ref{alg:siit} and $(\hat{s}, \hv) \in \R^P \times \R^{PN}$ a corresponding solution to \eqref{eq:fixedp}. Assume that $\Delta(\hat{s})$ is invertible. Let $\hat{\mu}_{\mc{A}} = \Delta^{-1}(\hat{s}) \hat{e}_1$, where $\hat{e}_1 = \{ \delta_{\a 0} \}_{\a \in \mc{A}} \in \R^P$. Set $\phi_{\min} := || \Delta^{-1}(\hat{s}) ||^{-1}_{\R^P}$ and $\psi_{\max} := ||\mbf{R} (\hat{s}, \hv) \mbf{S}^{-1}||_{\R^P \otimes \R^N_{\mbf{M}}}$. Then the iterates of Algorithm \ref{alg:siit} satisfy
\begin{equation}
\label{eq:eveciterconv}
|| \hu^{(k)} - \hu_{\mc{A}} ||_{\R^P \otimes \R^N_{\mbf{M}}} \le \frac{\psi_{\max}}{ \phi_{\min} } || \hu^{(k-1)} - \hu_{\mc{A}} ||_{\R^P \otimes \R^N_{\mbf{M}}}, \quad k \in \N
\end{equation}
for $\hu^{(k)}$ sufficiently close to $\hu_{\mc{A}}$. Furthermore, there exists $C > 0$ such that
\begin{equation}
\label{eq:evaliterconv}
|| \hat{\mu}^{(k)} - \hat{\mu}_{\mc{A}} ||_{\R^P} \le C || \hu^{(k)} - \hu_{\mc{A}} ||_{\R^P \otimes \R^N_{\mbf{M}}}, \quad k \in \N.
\end{equation}
\end{thm}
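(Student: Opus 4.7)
The plan is to view the spectral inverse iteration as a nonlinear map $G\colon\R^{PN}\to\R^{PN}$, $\hu^{(k)} = G(\hu^{(k-1)})$, and to carry out a Fréchet-level perturbation analysis at the fixed point. The three sub-steps of Algorithm \ref{alg:siit} compose as $\hv^{(k)} = \mbf{S}^{-1}\hu^{(k-1)}$, then $\hat{s}^{(k)}$ implicitly defined by $F(\hat{s}^{(k)},\hv^{(k)}) = 0$, and finally $\hu^{(k)} = \mbf{T}^{-1}(\hat{s}^{(k)})\hv^{(k)}$. Since the Jacobian $\partial_{s}F(\hat{s},\hv) = 2\Delta(\hat{s})$ is non-singular by hypothesis, the implicit function theorem provides a $C^1$ branch $\hat{s}^{(k)} = \hat{s}(\hv^{(k)})$ in a neighbourhood of the fixed-point value, so $G$ itself is $C^1$ near $\hu_{\mc{A}}$. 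The one delicate point is ensuring that the Newton iteration invoked in step (2) actually selects this branch along the whole iterate trajectory, which is exactly the content of the Newton--Kantorovich bound of Proposition \ref{prp:newton}.

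Next I would compute $G'(\hu_{\mc{A}})$ explicitly. Writing $\delta\hu$, $\delta\hv$, $\delta\hat{s}$ for the first-order perturbations, $\delta\hv = \mbf{S}^{-1}\delta\hu$ is exact, and implicit differentiation of $F$ together with $F^{s}(\hat{s},\cdot) = \Delta(\hat{s})$ (which follows from the full symmetry of the tensor $c_{\a\b\g}$) gives $\delta\hat{s} = \Delta^{-1}(\hat{s})F^{v}(\hv,\delta\hv)$. Linearising $\mbf{T}(\hat{s}^{(k)})\hu^{(k)} = \hv^{(k)}$ at the fixed point and using $\hu_{\mc{A}} = \mbf{T}^{-1}(\hat{s})\hv$ yields
\[
\delta\hu^{(k)} = \mbf{T}^{-1}(\hat{s})\bigl[\delta\hv - \mbf{T}(\delta\hat{s})\hu_{\mc{A}}\bigr] = \mbf{T}^{-1}(\hat{s})\mbf{R}(\hat{s},\hv)\mbf{S}^{-1}\delta\hu,
\]
so $G'(\hu_{\mc{A}}) = \mbf{T}^{-1}(\hat{s})\mbf{R}(\hat{s},\hv)\mbf{S}^{-1}$. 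The tensor-product structure $\mbf{T}(\hat{s}) = \Delta(\hat{s})\otimes\mbf{I}_{N}$ combined with the cross-norm $\|\cdot\|_{\R^{P}\otimes\R^{N}_{\mbf{M}}}$ yields $\|\mbf{T}^{-1}(\hat{s})\|_{\R^{P}\otimes\R^{N}_{\mbf{M}}} = \|\Delta^{-1}(\hat{s})\|_{\R^{P}} = \phi_{\min}^{-1}$, hence $\|G'(\hu_{\mc{A}})\|\le\psi_{\max}/\phi_{\min}$. The $C^1$ regularity of $G$ then converts this derivative bound into \eqref{eq:eveciterconv} for $\hu^{(k-1)}$ sufficiently close to $\hu_{\mc{A}}$, the higher-order Taylor remainder being absorbed into the stated constant.

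For \eqref{eq:evaliterconv} I would combine steps (2) and (3) into a single implicit relation for $\hat{s}^{(k)}$ in terms of $\hu^{(k)}$ alone, namely
\[
\tilde{F}(\hat{s}^{(k)},\hu^{(k)}) := F(\hat{s}^{(k)},\mbf{T}(\hat{s}^{(k)})\hu^{(k)}) = 0,
\]
obtained by substituting $\hv^{(k)} = \mbf{T}(\hat{s}^{(k)})\hu^{(k)}$ into step (2). A second application of the implicit function theorem (whose required Jacobian non-singularity at the fixed point is generically equivalent to injectivity of $\mbf{R}(\hat{s},\hv)$) expresses $\hat{s}^{(k)}$ as a $C^1$ function of $\hu^{(k)}$ near $\hu_{\mc{A}}$, producing $\|\hat{s}^{(k)} - \hat{s}\|_{\R^{P}} \le C\|\hu^{(k)} - \hu_{\mc{A}}\|_{\R^{P}\otimes\R^{N}_{\mbf{M}}}$. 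Subtracting $\Delta(\hat{s}^{(k)})\hat{\mu}^{(k)} = \hat{e}_{1} = \Delta(\hat{s})\hat{\mu}_{\mc{A}}$ gives the resolvent-type identity
\[
\hat{\mu}^{(k)} - \hat{\mu}_{\mc{A}} = -\Delta^{-1}(\hat{s}^{(k)})\Delta(\hat{s}^{(k)} - \hat{s})\hat{\mu}_{\mc{A}},
\]
and \eqref{eq:evaliterconv} follows by taking $\R^{P}$-norms, with the constant $C$ depending on $\phi_{\min}$, on boundedness of the bilinear form $F^{s}$, and on $\|\hat{\mu}_{\mc{A}}\|_{\R^{P}}$. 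The main conceptual obstacle of the whole argument lies in the first paragraph---justifying the implicit-function representation and the local branch of $\hat{s}^{(k)}$ along the iterates; the rest is linear algebra once $G'(\hu_{\mc{A}})$ has been identified with $\mbf{T}^{-1}(\hat{s})\mbf{R}(\hat{s},\hv)\mbf{S}^{-1}$.
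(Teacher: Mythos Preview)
Your derivation of \eqref{eq:eveciterconv} coincides with the paper's: both invoke the implicit function theorem on $F(\cdot,\hv)$ at the fixed point (Jacobian $2\Delta(\hat{s})$), linearise the update, and identify the derivative of the iteration map as $\mbf{T}^{-1}(\hat{s})\,\mbf{R}(\hat{s},\hv)\,\mbf{S}^{-1}$. Your explicit use of the cross-norm identity $\|\mbf{T}^{-1}(\hat{s})\|_{\R^P\otimes\R^N_{\mbf{M}}} = \|\Delta^{-1}(\hat{s})\|_{\R^P} = \phi_{\min}^{-1}$ is a point the paper leaves implicit.

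For \eqref{eq:evaliterconv}, however, the second implicit-function step you propose cannot be carried out. Because $\mbf{T}$ is linear in its first argument, the map $\tilde{F}(\hat{s},\hu) := F(\hat{s},\mbf{T}(\hat{s})\hu)$ is homogeneous of degree two in $\hat{s}$:
\[
\tilde{F}(c\hat{s},\hu) = F\bigl(c\hat{s},\,c\,\mbf{T}(\hat{s})\hu\bigr) = c^{2}\,\tilde{F}(\hat{s},\hu).
\]
Hence the entire ray $\{c\hat{s} : c\in\R\}$ lies in the zero set of $\tilde{F}(\cdot,\hu_{\mc{A}})$, and differentiating in $c$ at $c=1$ shows that $\hat{s}$ itself is always in the kernel of $\partial_{s}\tilde{F}(\hat{s},\hu_{\mc{A}})$. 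The Jacobian you need is therefore \emph{never} invertible at the fixed point, so the implicit function theorem does not apply. Your parenthetical remark linking this to injectivity of $\mbf{R}(\hat{s},\hv)$ is consistent with this failure but in the wrong direction: $\mbf{R}(\hat{s},\hv)$ is never injective either, since $\mbf{R}(\hat{s},\hv)\hv = 0$ by construction (this is precisely the deflation property discussed after Corollary~\ref{cor:iterconv}). The paper avoids the issue by the shorter route already at hand: the chain $\hat{\mu}^{(k)} = \Delta^{-1}(\hat{s}^{(k)})\hat{e}_1$, $\hat{s}^{(k)} = \hat{s}(\hv^{(k)})$, $\hv^{(k)} = \mbf{S}^{-1}\hu^{(k-1)}$ exhibits $\hat{\mu}^{(k)}$ as a locally Lipschitz function of $\hu^{(k-1)}$, and one then appeals to the asymptotic linear relation \eqref{eq:linearupdate} between $\hu^{(k)}$ and $\hu^{(k-1)}$ already established.
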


\begin{proof}
The partial derivative (Jacobian) of the function $F(\hat{s} + \hat{t}, \hv + \hw)$ with respect to $\hat{t}$ at $\hat{t} = 0$ is given by $2\Delta(\hat{s})$. The implicit function theorem now guarantees that there is a unique differentiable function $\hat{t}(\hw)$ defined in a neighbourhood of $\hw = 0$ such that $F(\hat{s} + \hat{t}(\hw),\hv + \hw) = 0$. Computing the first order approximation of this function we see that for $\hw$ small enough
\begin{equation}
\label{eq:linearization}
\hat{t}(\hw) = \Delta^{-1}(\hat{s}) F^v(\hv, \hw) + \textrm{h.o.t. in } \hw,
\end{equation}
where h.o.t. stands for higher order terms. From \eqref{eq:linearization} we obtain
\begin{align}
\mbf{T}^{-1}\left(\hat{s} + \hat{t}(\w)\right) & = \left(\mbf{T}(\hat{s}) + \mbf{T}\left(\Delta^{-1}(\hat{s}) F^v(\hv, \hw)\right) + \textrm{h.o.t. in } \hw \right)^{-1} \nonumber \\
& = \mbf{T}^{-1}(\hat{s}) - \mbf{T}^{-1}(\hat{s}) \mbf{T} \left(\Delta^{-1}(\hat{s}) F^v(\hv, \hw) \right) \mbf{T}^{-1}(\hat{s}) + \textrm{h.o.t. in } \hw.
\end{align}
Set $\hv^{(k)} = \mbf{S}^{-1} \hu^{(k)}$ and $\hw^{(k)} = \hv^{(k)} - \hv$. Now
\begin{align}
\mbf{S} \hw^{(k)} & = \mbf{T}^{-1}\left(\hat{s} + \hat{t}(\w^{(k-1)})\right)\left(\hv + \hw^{(k-1)}\right) - \mbf{S} \hv \nonumber \\
& = \mbf{T}^{-1}(\hat{s}) \left(\hv + \hw^{(k-1)} \right) - \mbf{T}^{-1}(\hat{s}) \mbf{T}\left(\Delta^{-1}(\hat{s})F^v(\hv, \hw^{(k-1)})\right) \mbf{T}^{-1}(\hat{s}) \hv - \mbf{S} \hv \nonumber \\
& \quad + \textrm{h.o.t. in } \hw^{(k-1)} \nonumber \\
& = \mbf{T}^{-1}(\hat{s})\left(\hw^{(k-1)} - \mbf{T}\left(\Delta^{-1}(\hat{s})F^v(\hv, \hw^{(k-1)})\right)  \mbf{T}^{-1}(\hat{s}) \hv \right) + \textrm{h.o.t. in } \hw^{(k-1)} \nonumber \\
& = \mbf{T}^{-1}(\hat{s})\mbf{R}(\hat{s}, \hv) \hw^{(k-1)} + \textrm{h.o.t. in } \hw^{(k-1)}.
\end{align}
Since $\mbf{S} \hw^{(k)} = \hu^{(k)} - \hu_{\mc{A}}$ we have that
\begin{equation}
\label{eq:linearupdate}
\hu^{(k)} - \hu_{\mc{A}} = \mbf{T}^{-1}(\hat{s})\mbf{R}(\hat{s}, \hv) \mbf{S}^{-1} \left(\hu^{(k-1)} - \hu_{\mc{A}} \right) + \textrm{h.o.t. in } \left(\hu^{(k-1)} - \hu_{\mc{A}} \right).
\end{equation}
Equations \eqref{eq:eveciterconv} and \eqref{eq:evaliterconv} now follow from \eqref{eq:linearupdate} and the fact that $\hat{\mu}^{(k)}$ is asymptotically given as a linear function of $\hu^{(k)}$.
\end{proof}

Adapting Theorem \ref{thm:iterconv} to the context of Algorithm \ref{alg:sii} we obtain the following Corollary.

\begin{cor}
\label{cor:iterconv}
Let $\u_{\mc{A}} \in \W^N$ be a fixed point of the Algorithm \ref{alg:sii} and $(s, \vv) \in \W \times \W^N$ a corresponding solution to \eqref{eq:projection}. Let $\mu_{\mc{A}} \in \W$ be such that $P_{\mc{A}}(s \mu_{\mc{A}}) = 1$. Assume that $s_* := \inf_{y \in \Gamma} s(y) > 0$ and let $\psi_{\max}$ be as in Theorem \ref{thm:iterconv}. Then the iterates of Algorithm \ref{alg:sii} satisfy
\[
|| \u^{(k)} - \u_{\mc{A}} ||_{L^2_{\nu}(\Gamma) \otimes \R^N_{\mbf{M}}} \le \frac{\psi_{\max}}{s_*}  || \u^{(k-1)} - \u_{\mc{A}} ||_{L^2_{\nu}(\Gamma) \otimes \R^N_{\mbf{M}}}, \quad k \in \N
\]
for $\u^{(k)}$ sufficiently close to $\u_{\mc{A}}$. Furthermore, there exists $C > 0$ such that
\begin{equation}
|| \mu^{(k)} - \mu_{\mc{A}} ||_{L^2_{\nu}(\Gamma)} \le C || \u^{(k)} - \u_{\mc{A}} ||_{L^2_{\nu}(\Gamma) \otimes \R^N_{\mbf{M}}}, \quad k \in \N.
\end{equation}
\end{cor}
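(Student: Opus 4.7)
The plan is to reduce the statement directly to Theorem \ref{thm:iterconv} through the canonical identification of $\W^N$ with $\R^{PN}$ via the coefficient map $\vv = \sum_{\a \in \mc{A}} \vv_{\a} \Lambda_{\a} \leftrightarrow \hv = \{ v_{\a i}\}$. First I would note that, by construction, Algorithms \ref{alg:sii} and \ref{alg:siit} are equivalent under this map: steps \eqref{eq:stepone}--\eqref{eq:stepthree} correspond exactly to \eqref{eq:linearsys}--\eqref{eq:blocklinearsys}, so the sequences of iterates coincide, and every fixed point in one formulation corresponds to a fixed point in the other. The orthonormality $\E[\Lambda_{\a} \Lambda_{\b}] = \delta_{\a \b}$ also yields the norm identity $||\vv||_{L^2_\nu(\Gamma)\otimes \R^N_{\mbf{M}}} = ||\hv||_{\R^P \otimes \R^N_{\mbf{M}}}$ (noted in the remark following Lemma \ref{lmm:sinversion}), and similarly $||t||_{L^2_\nu(\Gamma)} = ||\hat{t}||_{\R^P}$.

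The substantive step is to replace the abstract quantity $\phi_{\min} = ||\Delta^{-1}(\hat{s})||_{\R^P}^{-1}$ appearing in Theorem \ref{thm:iterconv} by the concrete lower bound $s_*$. For this I would interpret $\Delta(\hat{s})$ as the matrix representation, in the Legendre basis, of the linear map $\W \to \W$ given by $t \mapsto P_{\mc{A}}(s t)$: indeed, $[\Delta(\hat{s})\hat{t}]_{\a} = \sum_{\b,\g} c_{\a\b\g} s_{\b} t_{\g} = \E[\Lambda_{\a} \, st]$. Since $s(y) \ge s_* > 0$ on $\Gamma$, the scalar ($N=1$) version of Lemma \ref{lmm:sinversion} applies to this map and yields a unique solution to $P_{\mc{A}}(st)=f$ with $||t||_{L^2_\nu(\Gamma)} \le s_*^{-1} ||f||_{L^2_\nu(\Gamma)}$. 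Transported to coefficients this means $||\Delta^{-1}(\hat{s})||_{\R^P} \le s_*^{-1}$, i.e., $\phi_{\min} \ge s_*$.

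With these two ingredients the eigenvector estimate \eqref{eq:eveciterconv} of Theorem \ref{thm:iterconv} immediately yields
\[
||\u^{(k)} - \u_{\mc{A}}||_{L^2_\nu(\Gamma)\otimes \R^N_{\mbf{M}}} = ||\hu^{(k)} - \hu_{\mc{A}}||_{\R^P \otimes \R^N_{\mbf{M}}} \le \frac{\psi_{\max}}{\phi_{\min}} ||\hu^{(k-1)} - \hu_{\mc{A}}||_{\R^P \otimes \R^N_{\mbf{M}}} \le \frac{\psi_{\max}}{s_*} ||\u^{(k-1)} - \u_{\mc{A}}||_{L^2_\nu(\Gamma)\otimes \R^N_{\mbf{M}}}
\]
for $\u^{(k-1)}$ sufficiently close to $\u_{\mc{A}}$. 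The eigenvalue bound follows the same way from \eqref{eq:evaliterconv}, using that $\mu^{(k)} \leftrightarrow \hat{\mu}^{(k)}$ under the isometry and that the invertibility of $\Delta(\hat{s})$ is already secured by the lower bound $s_*>0$.

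The only real subtlety is bookkeeping: verifying that the coefficient map identifies the two algorithms and their norms, and that the lower bound $\phi_{\min} \ge s_*$ is justified by Lemma \ref{lmm:sinversion} rather than asserted. No new eigenvalue or perturbation analysis is needed at this stage; all of that work has been done in Theorem \ref{thm:iterconv}.
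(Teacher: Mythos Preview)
Your proposal is correct and matches the paper's own proof essentially line for line: the paper simply says to interpret Theorem \ref{thm:iterconv} in the context of Algorithm \ref{alg:sii} and notes that the bound $\phi_{\min} \ge s_*$ is a consequence of Lemma \ref{lmm:sinversion}. Your write-up just makes explicit the isometry of the coefficient map and the $N=1$ application of Lemma \ref{lmm:sinversion}, which is exactly the intended argument.
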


\begin{proof}
Interpret Theorem \ref{thm:iterconv} in the context of Algorithm \ref{alg:sii}. The bound $\phi_{\min} \ge s_*$ is a consequence of Lemma \ref{lmm:sinversion}.
\end{proof}

Obviously the previous Corollary has practical value only if $\psi_{\max} < s_*$. Here we will briefly discuss the value of $\psi_{\max}$ in the case that $(\hat{s}, \hv) \in \R^P \times \R^{PN}$ is associated to the dominant fixed point of Algorithm \ref{alg:siit}. Observe that the equation $\hat{\mbf{z}} = \mbf{R}(\hat{s},\hv)\hw$ is equivalent to the system
\begin{equation}
\left\{ \begin{array}{l}
\mbf{z}(y) = \w(y) - P_{\mc{A}} (t \u_{\mc{A}})(y) \\
P_{\mc{A}} (s t)(y) = P_{\mc{A}} \left( \langle \vv, \w \rangle_{\R^N_{\mbf{M}}} \right) \!(y)
\end{array} \right.
\end{equation}
for all $y \in \Gamma$. We see that, if $\w = \vv$ then $\mbf{z} = 0$, whereas, if $\langle \w(y), \vv(y) \rangle_{\R^N_{\mbf{M}}} = 0$ for all $y \in \Gamma$ then $\mbf{z} = \w$. Thus, the matrix $\mbf{R}(\hat{s},\hv)$ acts as a deflation that shrinks vectors that are close to $\vv(y)$ and preserves vectors that are almost orthogonal to $\vv(y)$. From Proposition \ref{prp:pwestimate} we know that $s^{-1}(y)$ is an approximation of the smallest eigenvalue of $\mbf{M}^{-1}\mbf{K}(y)$ and $\vv(y)$ is an approximation of the corresponding eigenvector. By Lemma \ref{lmm:sinversion} the operator norm of $\mbf{S}^{-1}$ is bounded by $\sup_{y \in \Gamma} \lambda_1^{-1}(y)$, where $\lambda_1(y)$ is the smallest eigenvalue of $\mbf{M}^{-1}\mbf{K}(y)$. Analogously, since the eigenvector corresponding to this smallest eigenvalue is deflated by $\mbf{R}(\hat{s},\hv)$, we expect the norm of $\mbf{R}(\hat{s},\hv)\mbf{S}^{-1}$ to be bounded by a value close to $\sup_{y \in \Gamma} \lambda_2^{-1}(y)$, where $\lambda_2(y)$ is the second smallest eigenvalue of $\mbf{M}^{-1}\mbf{K}(y)$. With this reasoning, if the deflation is sufficient, there is $\psi_{\max}^* \in \R$ such that
\begin{equation}
\label{eq:deflation}
\frac{\psi_{\max}}{s_*} \le \frac{\psi^*_{\max}}{s_*} \approx \lambda_{1/2} := \frac{\sup_{y \in \Gamma} \lambda_2^{-1}(y)}{\inf_{y \in \Gamma} \lambda_1^{-1}(y)} = \frac{\sup_{y \in \Gamma} \lambda_1(y)}{\inf_{y \in \Gamma} \lambda_2(y)}.
\end{equation}
The speed of convergence of the spectral inverse iteration is now characterized by what is essentially the largest ratio of the two smallest eigenvalues of the parametric matrix $\mbf{M}^{-1}\mbf{K}(y)$.

\subsubsection{Combined error analysis}

Let $(\mu, u) \in L^2_{\nu}(\Gamma) \times L^2_{\nu}(\Gamma) \otimes H_0^1(D)$ be the smallest eigenvalue and the associated eigenfunction of the continuous problem \eqref{eq:parametricevp}. Let $(\mu_h, u_h) \in L^2_{\nu}(\Gamma) \times L^2_{\nu}(\Gamma) \otimes V_h$ be the corresponding eigenpair of the semi-discrete problem \eqref{eq:dvarform}. Assume that there exists a dominant fixed point $\u_{\mc{A}} \in \W^N$ of Algorithm \ref{alg:sii} and an associated eigenvalue approximation $\mu_{h, \mc{A}} := \mu_{\mc{A}} \in \W$ as in Proposition \ref{prp:pwestimate}. Denote by $\u^{(k)} \in \W^N$ the $k$:th iterate of Algorithm \ref{alg:sii} and by $\mu_{h, \mc{A}, k} := \mu^{(k)} \in \W$ the associated solution to \eqref{eq:eigenvalue}. Let $u_{h, \mc{A}}$ and $u_{h, \mc{A}, k}$ denote the functions in $\W \otimes V_h$, whose coordinates are defined by the vectors $\u_{\mc{A}}$ and $\u^{(k)}$ respectively. The spatial, stochastic, and iteration errors may now be separated in the following sense:
\begin{equation}
\label{eq:evalerrors}
|| \mu - \mu_{h, \mc{A}, k} ||_{L^2_{\nu}(\Gamma)} \le || \mu - \mu_h ||_{L^2_{\nu}(\Gamma)} + || \mu_h - \mu_{h,\mc{A}} ||_{L^2_{\nu}(\Gamma)} + || \mu_{h,\mc{A}} - \mu_{h, \mc{A}, k} ||_{L^2_{\nu}(\Gamma)}
\end{equation}
and
\begin{align}
\label{eq:evecerrors}
|| u - u_{h, \mc{A}, k} ||_{L^2_{\nu}(\Gamma) \otimes L^2(D)} \le & || u - u_h ||_{L^2_{\nu}(\Gamma) \otimes L^2(D)} + || u_h - u_{h, \mc{A}} ||_{L^2_{\nu}(\Gamma) \otimes L^2(D)} \nonumber \\
& \quad + || u_{h, \mc{A}} - u_{h, \mc{A}, k} ||_{L^2_{\nu}(\Gamma) \otimes L^2(D)}.
\end{align}

Under sufficient conditions we may now bound each term in the equations \eqref{eq:evalerrors} and \eqref{eq:evecerrors} separately using the theory developed earlier in this section. The first term may be approximated using Theorem \ref{thm:herror}, the second term may be approximated using Theorem \ref{thm:fpconv} and Proposition \ref{prp:exponentialconv}, and the third term may be approximated using Corollary \ref{cor:iterconv} of Theorem \ref{thm:iterconv} and the hypothesis \eqref{eq:deflation}. We therefore expect that, with an optimal choise the multi-index sets $\mc{A}_{\epsilon}$ for $\epsilon > 0$, the output of the spectral inverse iteration converges to the exact solution according to
\begin{equation}
\label{eq:evecconv}
|| u - u_{h, \mc{A}_{\epsilon}, k} ||_{L^2_{\nu}(\Gamma) \otimes L^2(D)} \lesssim h^{1+l} + (\# \mc{A}_{\epsilon} )^{-r} + \lambda_{1/2}^k
\end{equation}
and similarly
\begin{equation}
\label{eq:evalconv}
|| \mu - \mu_{h, \mc{A}_{\epsilon}, k} ||_{L^2_{\nu}(\Gamma)} \lesssim h^{2l} +( \# \mc{A}_{\epsilon} )^{-r} + \lambda_{1/2}^k
\end{equation}
for certain rates $r > 0$ and $l > 0$.

\subsection{Numerical examples}
\label{ssec:siinumex}

We present numerical evidence to verify the equations \eqref{eq:evecconv} and \eqref{eq:evalconv}. In each of the following examples we compute the smallest eigenvalue and the corresponding eigenfunction of the model problem \eqref{eq:modelproblem} in the unit square $D = [0,1]^2$ using the Algorithm \ref{alg:sii}. We use the smallest eigenvector at $y = 0$ as an initial guess. For the diffusion coefficient we assume the form \eqref{eq:klexpansion} with $a_0 := 1$ and
\[
a_m(x) := \left\{ \begin{array}{l} (m + 1)^{- \varsigma} \sin(m \pi x_1), \quad m = 1,3,\ldots \\ (m + 1)^{- \varsigma}\sin(m \pi x_2), \quad m = 2,4,\ldots \end{array} \right. \quad x = (x_1,x_2) \in D,
\]
where we set $\varsigma = 3.2$. Now $|| a_m ||_{L^{\infty}(D)} \le C m^{-\varsigma}$ and $|| a_m ||_{W^{2,\infty}(D)} \le C m^{-\varsigma + 2}$ so that the assumptions \eqref{eq:positiveness} -- \eqref{eq:psummability2} for $s=2$ are satisfied with $p_0 > \varsigma^{-1}$ and $p_2 > (\varsigma - 2)^{-1}$. We therefore expect the regions of analyticity in Proposition \ref{prp:analyticity} to increase according to $\tau_m \ge C m^{\varsigma - 1}$.

The deterministic mesh is a uniform grid of second order quadrilateral elements in all computations. The discretization in the parameter space is obtained by defining the multi-index sets
\[
\mc{A}_{\epsilon} := \left\{ \a \in (\N^{\infty}_0)_c \ \left\vert \ \prod_{m \in \supp{\a}} \eta_m^{\a_m} > \epsilon \right. \right\},
\]
where we set
\[
\eta_m := \left(\tau_m + \sqrt{1 + \tau_m^2} \right)^{-1}, \quad \tau_m := (m+1)^{\varsigma-1}, \quad m=1,2,\ldots
\]
as in the proof of Proposition \ref{prp:exponentialconv}. Multi-index sets of this form have been introduced in \cite{bierischwab09} and in \cite{bieriandreevschwab09} an algorithm for generating them has been suggested.

We use a matrix free formulation of the conjugate gradient method for solving the linear systems \eqref{eq:linearsys} and \eqref{eq:blocklinearsys}. The preconditioner is constructed using the mean of the parametric matrix in question \cite{Powell:2008hc} and as an initial guess we set the solution of the system from the previous iteration. We wish to note that in this setting only a very few iterations of the conjugate gradient method are needed at each step of the spectral inverse iteration.

In the lack of an exact solution we compute an overkill solution $(\mu_*,u_*)$ for which the number of deterministic degrees of freedom is $N = 36 741$, the parameter $\epsilon$ is chosen such that $\# \mc{A}_{\epsilon} = 264$, and the number of iterations is $k = 16$. This results in roughly $10^7$ total degrees of freedom. The number of active dimensions in the overkill solution is $M(\mc{A}_{\epsilon}) = 113$. All the numerical examples in this section have been computed using this overkill solution as a reference. The expected value and variance of the eigenfunction are presented in Figure \ref{fig:normandvar}.

\begin{figure}[htb]
\begin{center}
\subfloat[{Mean $\E[u_*(x,\cdot)]$}]{\includegraphics[width=0.48\textwidth]{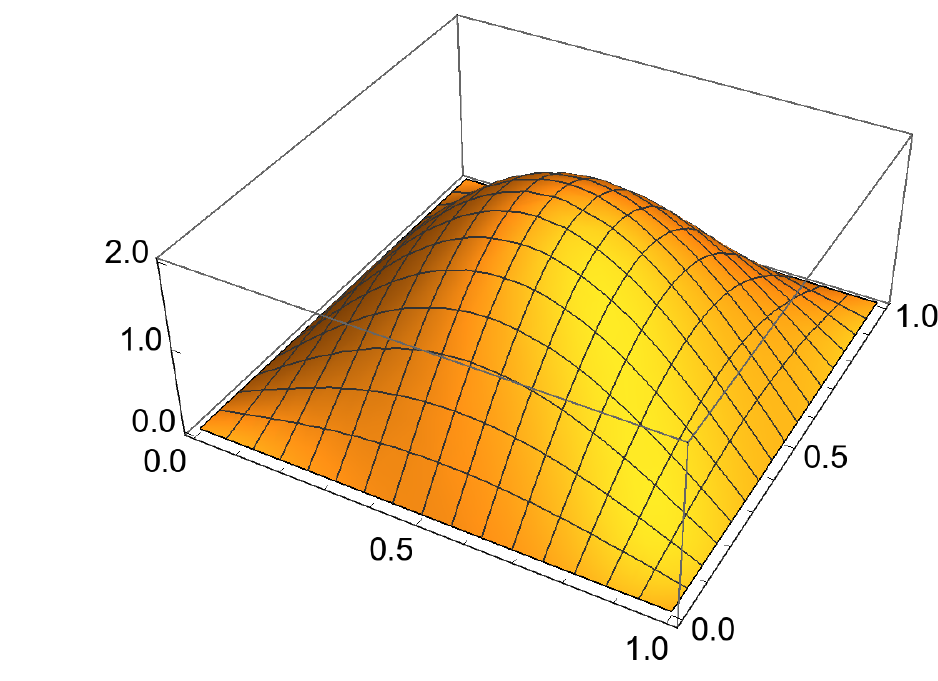}}\quad
\subfloat[{Variance $\Var[u_*(x,\cdot)]$}]{\includegraphics[width=0.48\textwidth]{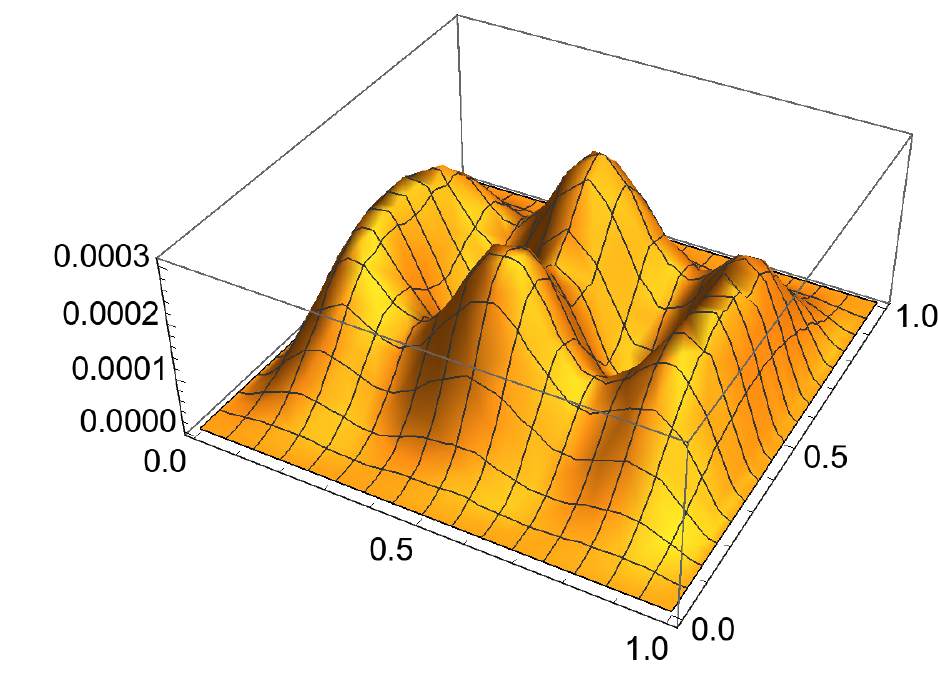}}
\caption{The mean and variance of the eigenfunction as computed by Algorithm \ref{alg:sii}.}
\label{fig:normandvar}
\end{center}
\end{figure}

\subsubsection{Convergence in space}

Keeping the number of stochastic degrees of freedom $\# \mc{A}_{\epsilon} = 264$ and the number of iterations $k = 16$ fixed, we may investigate the convergence of the solution $(\mu_{*,h},u_{*,h})$ as a function of the spatial discretization parameter $h$. This convergence for piecewise quadratic basis functions is illustrated in Figure \ref{fig:spatialconv}. We observe algebraic convergence rates of order 3 and 4 for the eigenfunction and eigenvalue respectively, exactly as predicted by Theorem \ref{thm:herror}. Thus, the error behaves like $N^{-3/2}$ and $N^{-2}$ with respect to the number of deterministic degrees of freedom.

\begin{figure}[htb]
\begin{center}
\subfloat[{The errors $|| u_* - u_{*,h}||_{L^2_{\nu}(\Gamma) \otimes L^2(D)}$.}]{\includegraphics[width=0.48\textwidth]{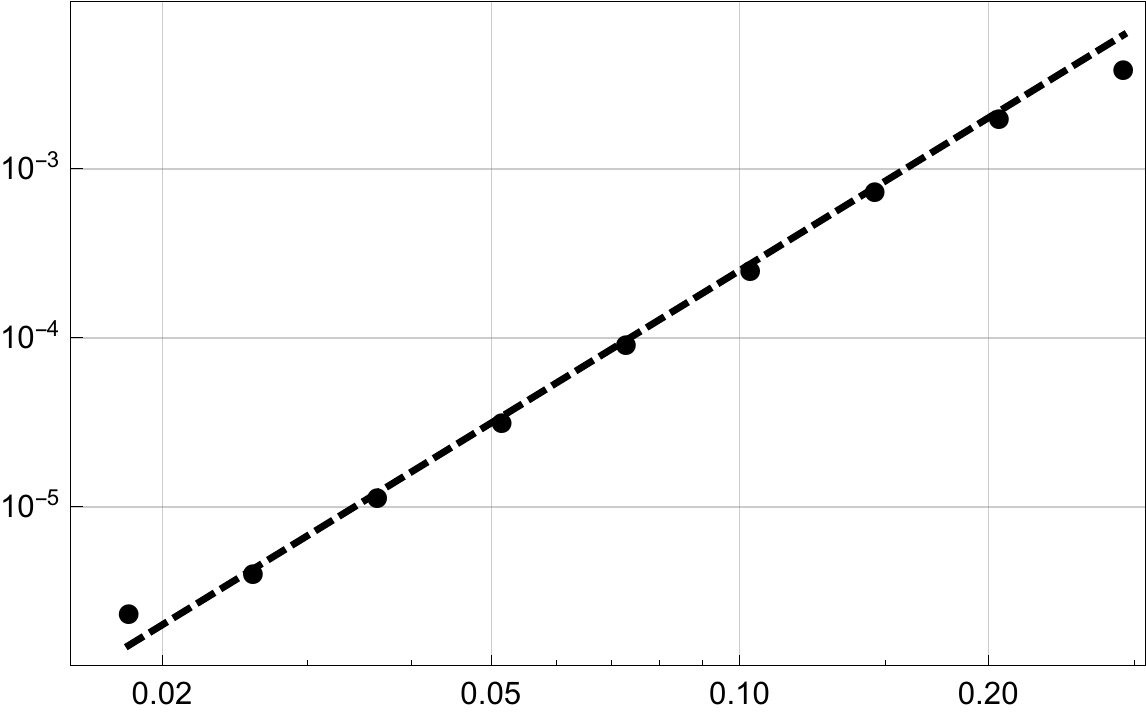}}\quad
\subfloat[{The errors $|| \mu_* - \mu_{*,h}||_{L^2_{\nu}(\Gamma)}$.}]{\includegraphics[width=0.48\textwidth]{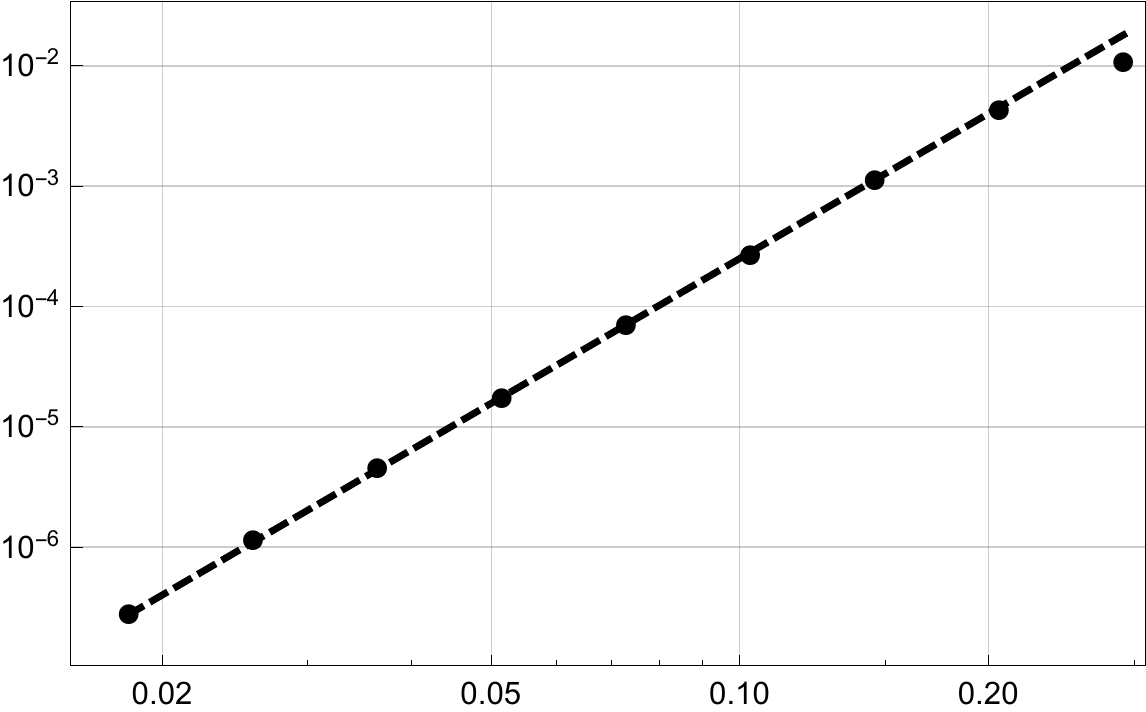}}
\caption{Convergence of the spatial errors for the eigenfunction and eigenvalue as computed by Algorithm \ref{alg:sii}. The points represent a log-log plot of the errors as a function of $h$. The dashes lines represent the rates $h^3$ and $h^4$ respectively.}
\label{fig:spatialconv}
\end{center}
\end{figure}

\subsubsection{Convergence in the parameter domain}

Keeping the number of spatial degrees of freedom $N = 36 741$ and the number of iterations $k = 16$ fixed, we may investigate the convergence of the solution $(\mu_{*,\mc{A}_{\epsilon}},u_{*,\mc{A}_{\epsilon}})$ as a function of $\# \mc{A}_{\epsilon}$ as $\epsilon \to 0$. This convergence is illustrated in Figure \ref{fig:stochasticconv}. We observe approximate algebraic convergence rates of order $-r = -1.9$ with respect to the number of stochastic degrees of freedom $\# \mc{A}_{\epsilon}$.

\begin{figure}[htb]
\begin{center}
\subfloat[{The errors $|| u_* - u_{*,\mc{A}_{\epsilon}}||_{L^2_{\nu}(\Gamma) \otimes L^2(D)}$.}]{\includegraphics[width=0.48\textwidth]{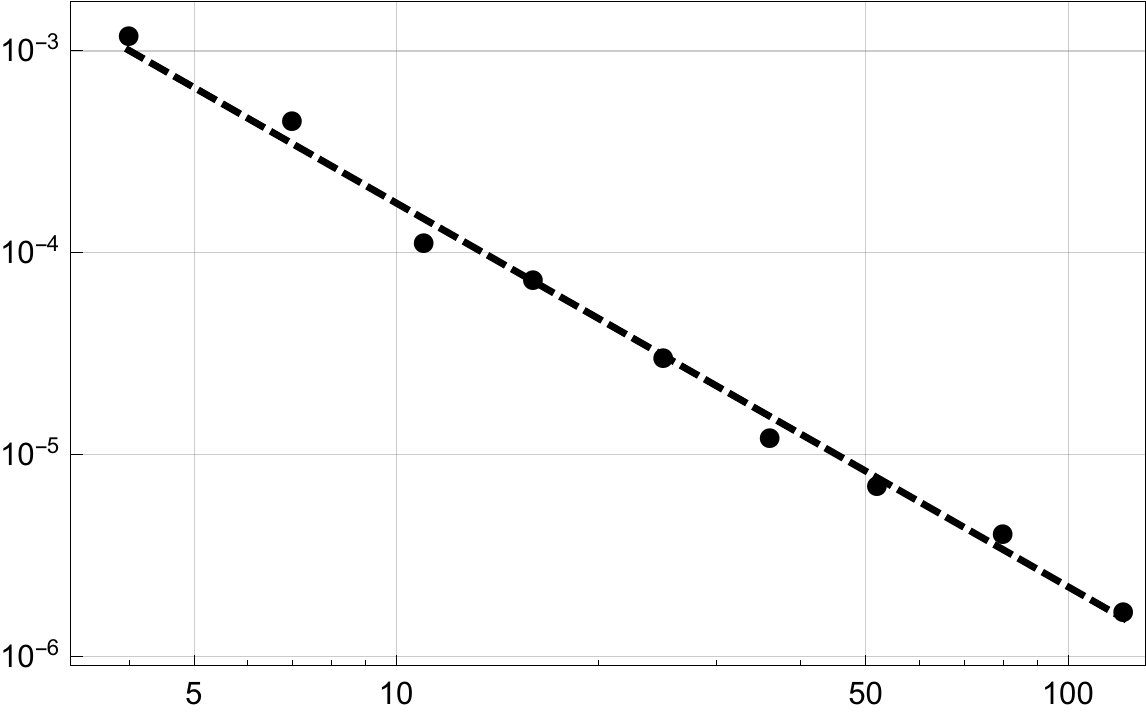}}\quad
\subfloat[{The errors $|| \mu_* - \mu_{*,\mc{A}_{\epsilon}}||_{L^2_{\nu}(\Gamma)}$.}]{\includegraphics[width=0.48\textwidth]{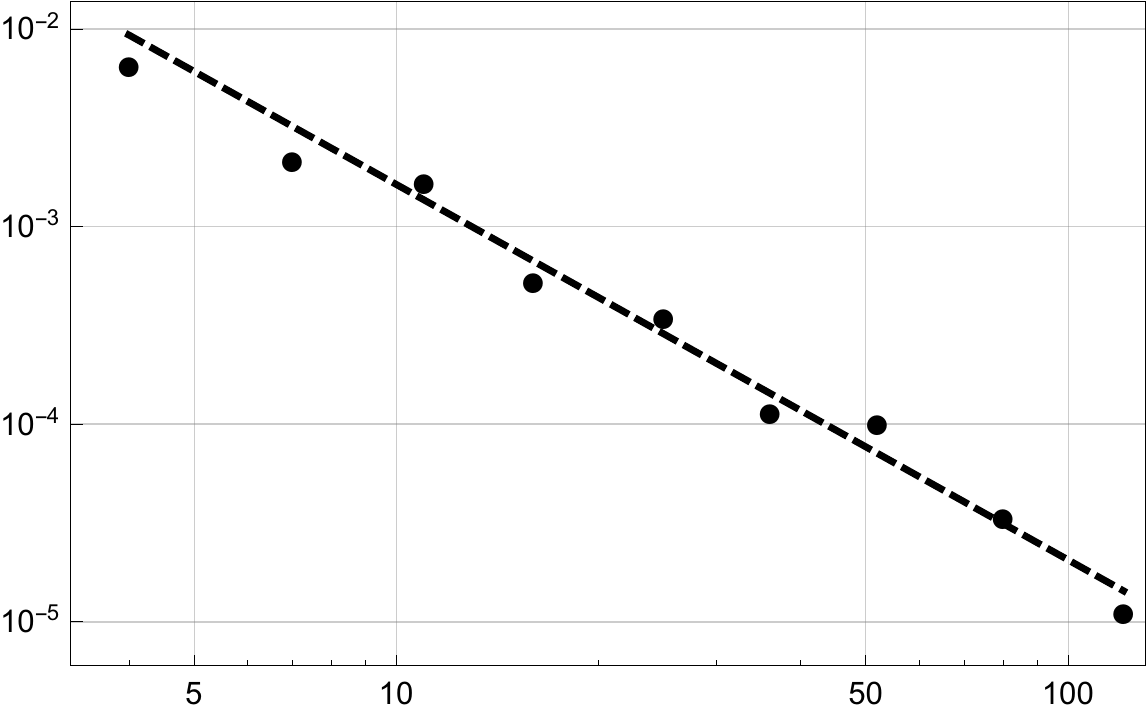}}
\caption{Convergence of the stochastic errors for the eigenfunction and eigenvalue as computed by Algorithm \ref{alg:sii}. The points represent a log-log plot of the errors as a function of $\# \mc{A}_{\epsilon}$. The dashed lines represent the rate $(\# \mc{A}_{\epsilon})^{-1.9}$.}
\label{fig:stochasticconv}
\end{center}
\end{figure}

In Figure \ref{fig:legendresunsorted} we have presented the norms of the Legendre coefficients of the overkill solution. The ordering of the coefficients is the same as the order in which they would appear in the multi-index set $\# \mc{A}_{\epsilon}$ as $\epsilon \to 0$. We see that the norms converge at the rate $-r - 1/2 = -2.4$ exactly as we would expect from the proof of Proposition \ref{prp:exponentialconv}. In Figure \ref{fig:legendressorted} we have presented the norms of the same Legendre coefficients sorted by decreasing magnitude. From this Figure we estimate that, with an optimal selection of the multi-index sets we could in fact observe a rate of convergence $-r = -2.3$ for the error of the solution. This ideal rate of convergence is somewhat faster than the asymptotic theoretical bound of $-r = -\varsigma + 3/2 = -1.7$ predicted by Proposition \ref{prp:exponentialconv}.

\begin{figure}[htb]
\begin{center}
\subfloat[{The values $|| (u_*)_{\a} ||_{L^2(D)}$.}]{\includegraphics[width=0.48\textwidth]{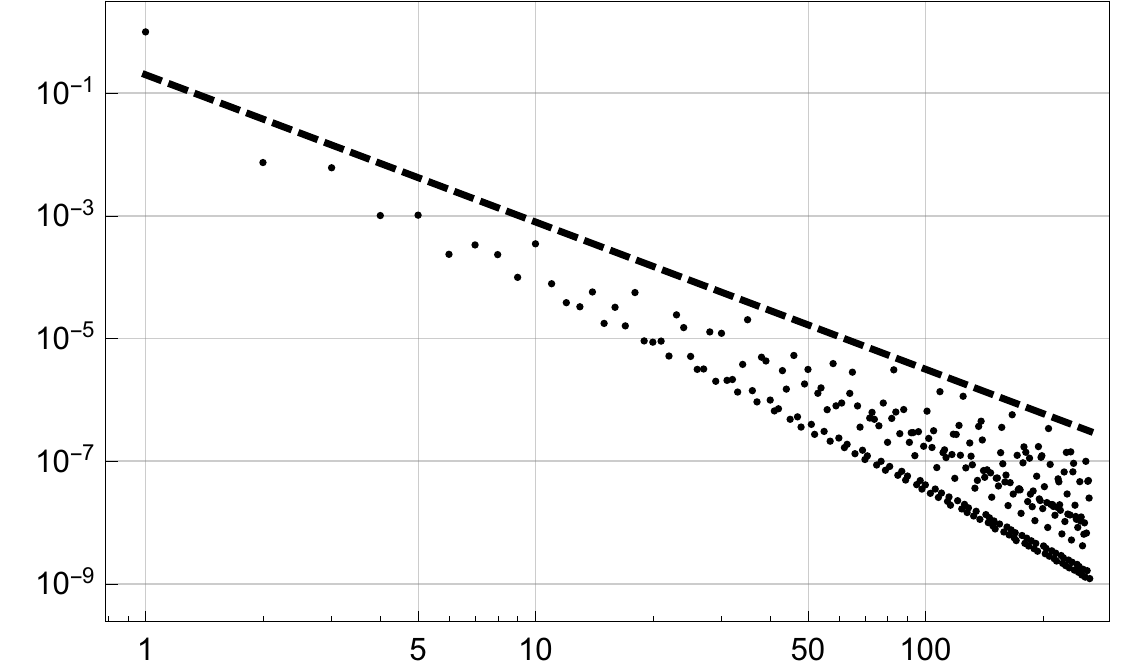}}\quad
\subfloat[{The values $| (\mu_*)_{\a} |$.}]{\includegraphics[width=0.48\textwidth]{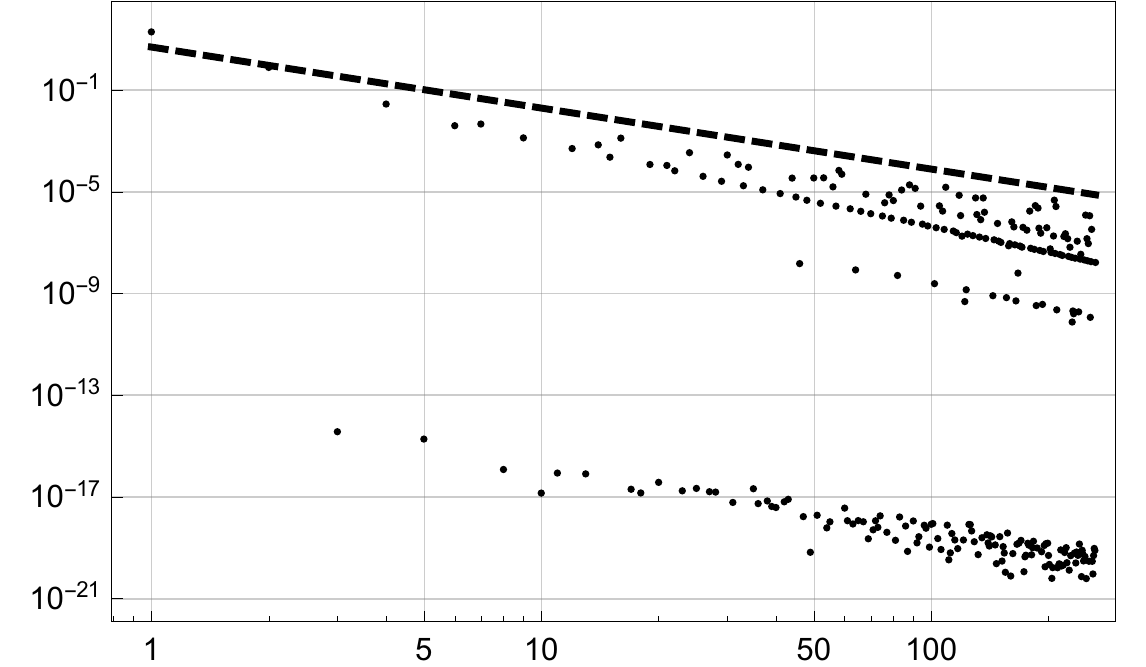}}
\caption{A log-log plot of the norms of the Legendre coefficients of the overkill solution. The dashed lines represent the algebraic rate $-2.4$.}
\label{fig:legendresunsorted}
\end{center}
\end{figure}

\begin{figure}[htb]
\begin{center}
\subfloat[{The values $|| (u_*)_{\a} ||_{L^2(D)}$.}]{\includegraphics[width=0.48\textwidth]{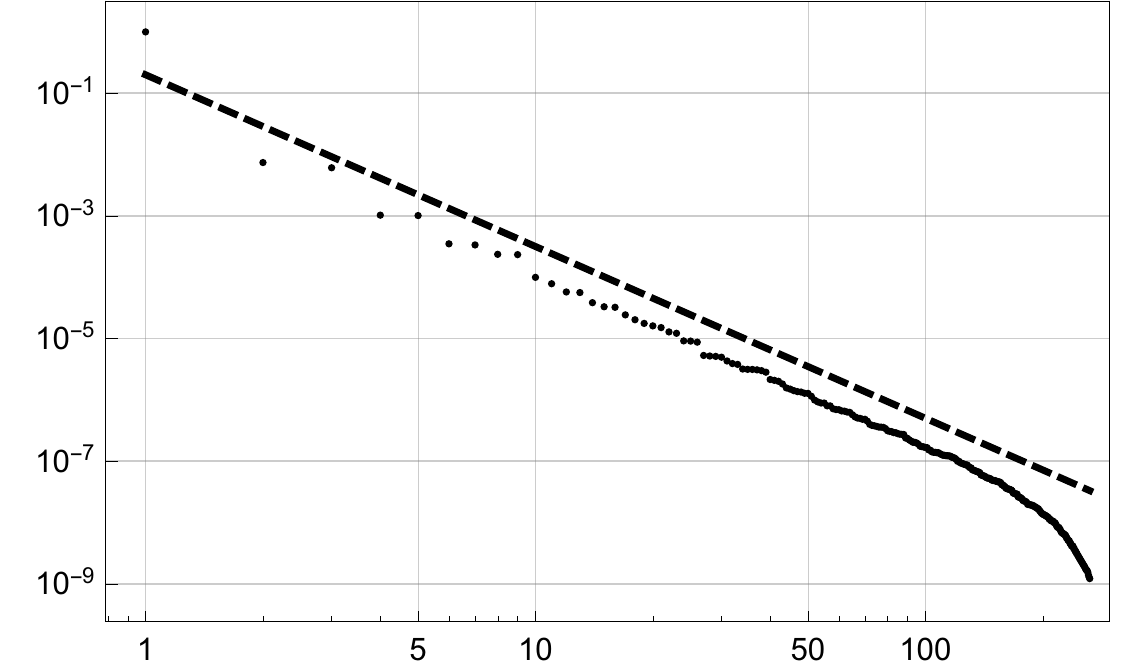}}\quad
\subfloat[{The values $| (\mu_*)_{\a} |$.}]{\includegraphics[width=0.48\textwidth]{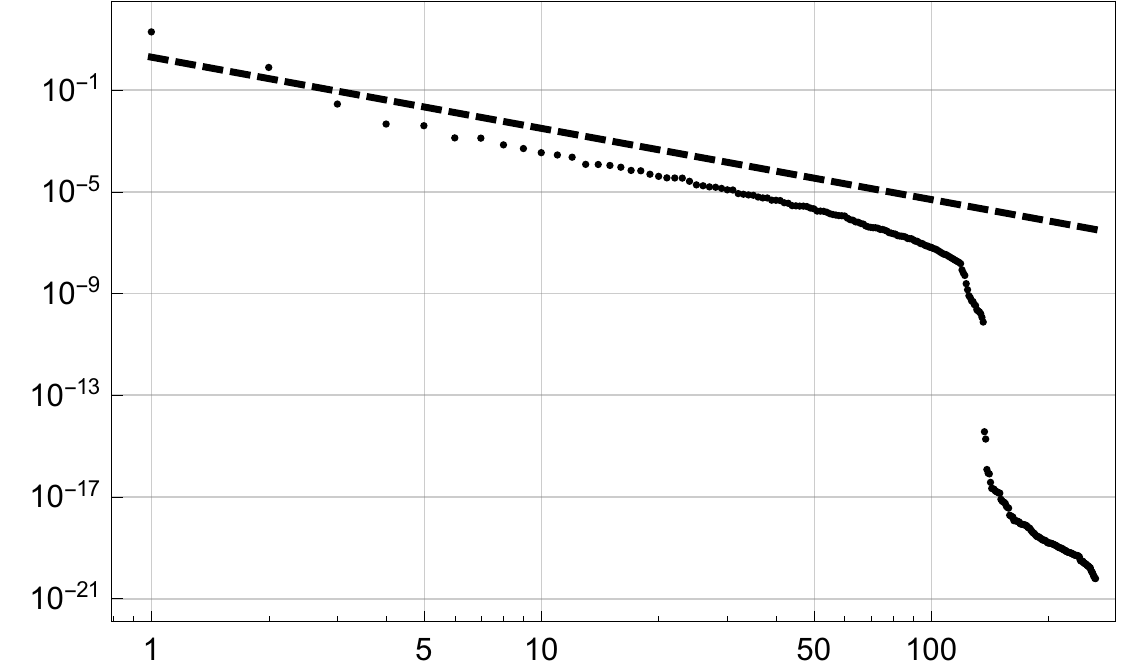}}
\caption{A log-log plot of the norms of the Legendre coefficients of the overkill solution sorted by decrasing magnitude. The dashed lines represent the algebraic rate $-2.8$.}
\label{fig:legendressorted}
\end{center}
\end{figure}

Interestingly we observe two well separated clusters of values in Figure \ref{fig:legendresunsorted}b. It seems that many of the multi-indices that correspond to relatively large Legendre coefficients of the eigenfunction, account only for a marginal contribution to the eigenvalue.

\subsubsection{Convergence of the iteration error}

Keeping the number of spatial basis functions $N = 36 741$ and the parameter $\epsilon$ fixed so that $\# \mc{A}_{\epsilon} = 264$, we may investigate the convergence of the solution $(\mu_{*,k},u_{*,k})$ as a function of the number of iterations $k$. This convergence is illustrated in Figure \ref{fig:iterationconv}. Assuming that the variation in the eigenvalues within the parameter space is small, the value $\lambda_{1/2}$ defined in \eqref{eq:deflation} may be approximated by the ratio of the two smallest eigenvalues of the problem at $y = 0$. Thus, Figure~\ref{fig:iterationconv} suggests that the error behaves asymptotically like $\lambda_{1/2}^k$, just as predicted by Corollary \ref{cor:iterconv}.

It is worth noting that, from the analysis of the classical inverse iteration, one might expect the eigenvalue to converge faster than the eigenfunction. In fact, the eigenvalue exhibits a faster rate of convergence at first and the error behaves like $\lambda_{1/2}^{2k}$. Comparing to the results of the previous example, we see that $k \approx 9$ represents a turning point after which the stochastic error in the eigenfunction starts to dominate the iteration error. Hence, for $k \ge 9$ the polynomial approximation in the parameter domain is insufficient to guarantee the degree of accuracy that is required for the eigenvalue to exhibit the faster rate of convergence that is otherwise characteristic to it.

\begin{figure}[htb]
\begin{center}
\subfloat[{The errors $|| u_* - u_{*,k}||_{L^2_{\nu}(\Gamma) \otimes L^2(D)}$.}]{\includegraphics[width=0.48\textwidth]{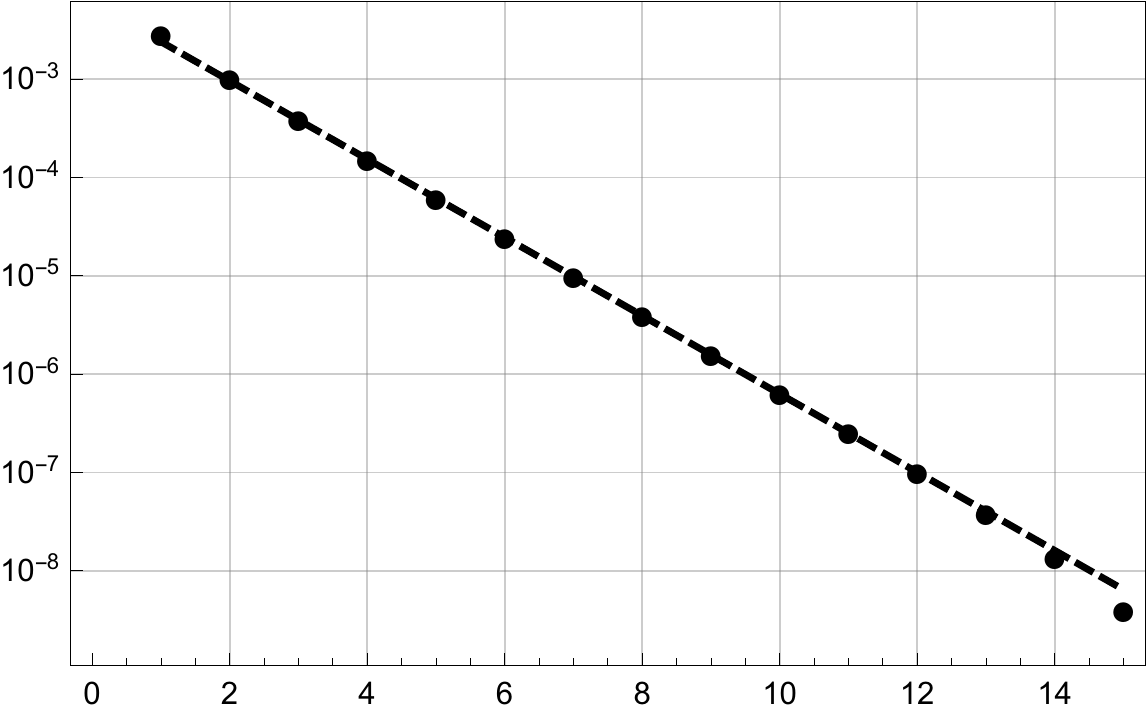}}\quad
\subfloat[{The errors $|| \mu_* - \mu_{*,k}||_{L^2_{\nu}(\Gamma)}$.}]{\includegraphics[width=0.48\textwidth]{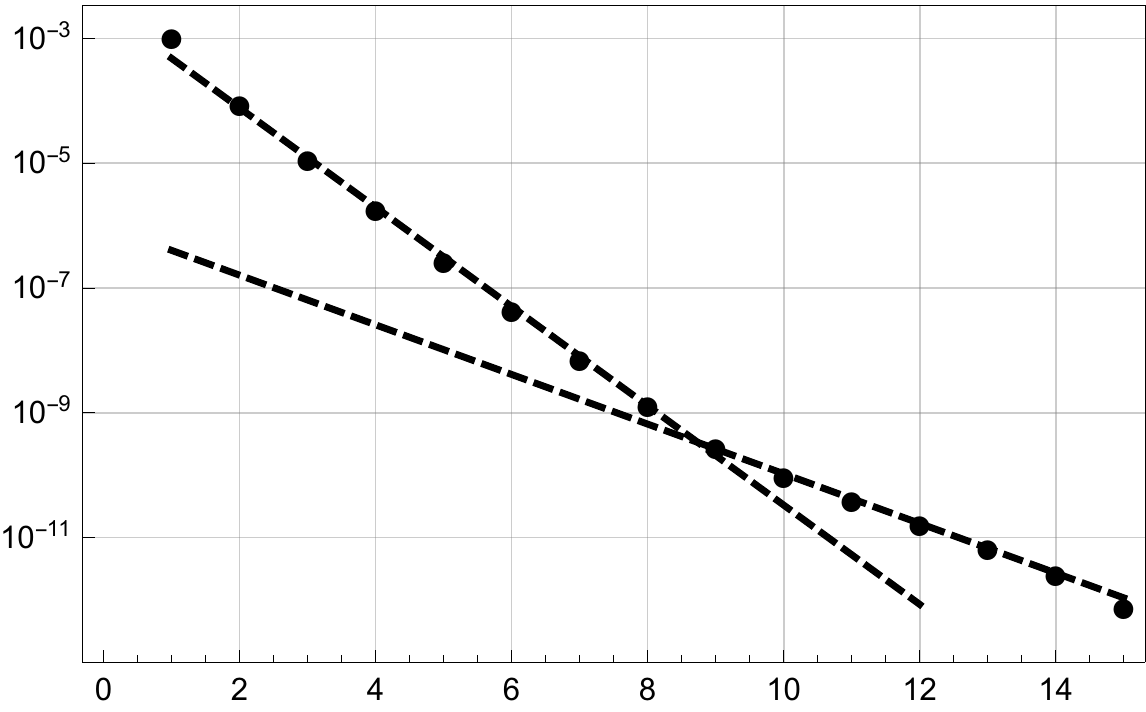}}
\caption{Convergence of the iteration errors for the eigenfunction and eigenvalue as computed by Algorithm \ref{alg:sii}. The points represent a log plot of the errors as a function of $k$. The dashed lines represent the rates $\bar{\lambda}_{1/2}^k$ and $\bar{\lambda}_{1/2}^{2k}$, where $\bar{\lambda}_{1/2}$ is the ratio of the two smallest eigenvalues of the problem at $y = 0$.}
\label{fig:iterationconv}
\end{center}
\end{figure}

\subsubsection{Concluding remarks and comparison to sparse collocation}

Using the finest levels of discretization, i.e., $N = 9296$ degrees of freedom for approximation in space and $\# \mc{A}_{\epsilon} = 121$ degrees of freedom for approximation in the parameter domain, and computing $k = 9$ steps of the inverse iteration we obtain a solution for which the $L^2_{\nu}(\Gamma) \otimes L^2(D)$ error of the eigenfunction is approximately $3 \cdot 10^{-6}$. The number of total degrees of freedom in this case is more than $10^6$ and the number of active dimensions is $M(\mc{A}_{\epsilon}) = 60$. The total computational time on a standard desktop machine is approximately five minutes, most of which is spent in the conjugate gradient method for the linear systems \eqref{eq:linearsys} and \eqref{eq:blocklinearsys}.

When the solution computed via the spectral inverse iteration is compared to the results of the non-composite version of the sparse collocation method introduced in \cite{bieri11} and employed in e.g. \cite{andreevschwab12} (see equations (5.12) -- (5.13) and (5.16) -- (5.17)), the statistics of the two solutions seem to almost coincide. Again using the finest levels of discretization ($N = 9296$ and $\# \mc{A}_{\epsilon} = 121$) for both methods, the $L^2(D)$ errors of mean and variance of the eigenfunction are both less than $3 \cdot 10^{-8}$ and the errors in the eigenvalue are less than $3 \cdot 10^{-11}$ and $3 \cdot 10^{-9}$ for the mean and variance respectively.

\section{Spectral subspace iteration}
\label{sec:subspaceiteration}

In this section we extend the spectral inverse iteration to a spectral subspace iteration, with which we can compute dominant subspaces of the inverse of the parametric matrix under consideration. The underlying assumption is that the subspace is sufficiently smooth with respect to the parameters. Convergence of the spectral subspace iteration is verified through numerical experiments.

\subsection{Algorithm description}

As with the classical subspace iteration, the idea in the spectral version is to perform inverse iteration for a set of vectors and orthogonalize these vectors at each step. Orthogonality should here be understood in a sense that the vectors are orthogonal for all points in the parameter space $\Gamma$. This can be approximately achieved by performing the Gram-Schmidt orthogonalization process for the vectors in the Galerkin sense, i.e., by projecting each elementary operation to the basis $\W$.

Fix a finite set of multi-indices $\mc{A} \subset (\mbb{N}_0^{\infty})_c$ and let $P= \# \mc{A}$. The spectral subspace iteration for the system \eqref{eq:matrixevp} is now defined in Algorithm \ref{alg:ssi}. Observe that, if the projections were precise, then the Algorithm would correspond to performing the classical subspace iteration pointwise on $\Gamma$. Orhtogonalization of the basis vectors via the Gram-Schmidt process is achieved in step (2). We expect Algorithm \ref{alg:ssi} to converge to an approximate basis for the $Q$-dimensional invariant subspace associated to the smallest eigenvalues of the system.

\begin{alg}[Spectral subspace iteration]
\label{alg:ssi}
Fix $tol > 0$ and let $\{ \u^{(0,q)} \}_{q=1}^Q \subset \W^N$ be an initial guess for the basis of the subspace. For $k = 1,2,\ldots$ do
\begin{enumerate}
\item[(1)] For each $q = 1, \ldots, Q$ solve $\vv^{(q)} \in \W^N$ from the linear equation
\begin{equation}
P_{\mc{A}} \left( \mbf{K} \vv^{(q)} \right) = \mbf{M} \u^{(k-1,q)}.
\end{equation}
\item[(2)] For $q = 1, \ldots, Q$ do
\begin{enumerate}
\item[(2.1)] Set
\begin{equation}
\w^{(q)} = \vv^{(q)} - \sum_{i=1}^{q-1} P_{\mc{A}}\left(\u^{(k,i)} P_{\mc{A}} \left( \langle \vv^{(q)}, \u^{(k,i)} \rangle_{\R^N_{\mbf{M}}} \right) \right).
\end{equation}
\item[(2.2)] Solve $s^{(q)} \in \W$ from the nonlinear equation
\begin{equation}
P_{\mc{A}} \left((s^{(q)})^2\right) = P_{\mc{A}} \left( || \w^{(q)} ||_{\R^N_{\mbf{M}}}^2 \right).
\end{equation}
\item[(2.3)] Solve $\u^{(k,q)} \in \W^N$ from the linear equation
\begin{equation}
P_{\mc{A}} \left( s^{(q)} \u^{(k,q)} \right) = \w^{(q)}.
\end{equation}
\end{enumerate}
\item[(3)] Stop if a suitable criterion is satisfied and return $\{ \u^{(k,q)} \}_{q=1}^Q \subset \W^N$ as the approximate basis for the subspace.
\end{enumerate}
\end{alg}

In general we can not expect the output vectors $\{ \u^{(k,q)} \}_{q=1}^Q \subset \W^N$ of Algorithm \ref{alg:ssi} to converge to any particular eigenvectors of the system \eqref{eq:matrixevp}. However, we still expect them to approximately span the subspace associated to the smallest eigenvalues of the system. In view of Section \ref{sec:analyticity}, if a cluster of eigenvalues is sufficiently well separated from the rest of the spectrum, then we assume the associated subspace to be analytic with respect to the parameter vector $y \in \Gamma$. In this case we may expect optimal convergence of the projections in the Algorithm.

\begin{rmk}
In order to measure convergence of the Algorithm \ref{alg:ssi} we should be able to estimate the angle between subspaces over the parameter space $\Gamma$. It is not entirely trivial to perform this kind of a computation in practise. The numerical examples in Section \ref{ssec:ssinumex} will hopefully give some more insight on this.
\end{rmk}

\begin{rmk}
As noted in Section \ref{sec:analyticity}, the smallest eigenvalue of the problem \eqref{eq:parametricevp} is always simple, hence analytic. For more general problems this might not be the case. For instance, in the event of an eigenvalue crossing, the eigenmode corresponding to the pointwise smallest eigenvalue is not (in general) even a continuous function of the parameter vector $y$. In this case we can modify the Algorithm \ref{alg:ssi} by adding the step
\begin{itemize}[leftmargin=1cm]
\item[(2.0)] Set $\vv^{(1)} = \sum_{q=1}^Q \vv^{(q)}$
\end{itemize}
before step (2.1). This should ensure optimal convergence, since even if the eigenmodes change places, we still expect their sum to be smooth with respect to $y$.
\end{rmk}

Using the tensors defined in Section \ref{sec:inverseiteration} we may write Algorithm \ref{alg:ssi} in the following form.

\begin{alg}[Spectral subspace iteration in tensor form]
Fix $tol > 0$ and let $\{ \hu^{(0,q)} \}_{q=1}^Q \subset \R^{PN}$ be an initial guess for the basis of the subspace. For $k = 1,2,\ldots$ do
\begin{enumerate}
\item[(1)] For each $q = 1, \ldots, Q$ solve $\hv^{(q)} \in \R^{PN}$ from the linear system
\begin{equation}
\widehat{\mbf{K}} \hv^{(q)} = \widehat{\mbf{M}} \hu^{(k-1,q)}.
\end{equation}
\item[(2)] For $q = 1, \ldots, Q$ do
\begin{enumerate}
\item[(2.1)] Set
\begin{equation}
\hw^{(q)} = \hv^{(q)} - \sum_{i=1}^{q-1} \mbf{T}\left( F^v(\hv^{(q)},\hu^{(k,i)}) \right)\hu^{(k,i)}.
\end{equation}
\item[(2.2)] Solve $\hat{s}^{(q)} \in \R^P$ from the nonlinear system
\begin{equation}
F(\hat{s}^{(q)},\hw^{(q)}) = 0
\end{equation}
with the initial guess $s^{(q)}_{\a} = || \hw^{(q)} ||_{\R^P \otimes \R^N_{\mbf{M}}} \delta_{\a 0}$ for $\a \in \mc{A}$.
\item[(2.3)] Solve $\hu^{(k,q)} \in \R^{PN}$ from the linear system
\begin{equation}
\mbf{T}(\hat{s}^{(q)}) \hu^{(k,q)} = \hw^{(q)}.
\end{equation}
\end{enumerate}
\item[(3)] Stop if a suitable criterion is satisfied and return $\{ \hu^{(k,q)} \}_{q=1}^Q \subset \R^{PN}$ as the approximate basis for the subspace.
\end{enumerate}
\end{alg}

\subsection{Numerical examples}
\label{ssec:ssinumex}

We use Algorithm \ref{alg:ssi} to compute the 3-dimensional subspace associated with the smallest eigenvalues of the model problem considered in Section \ref{ssec:siinumex}. We let the deterministic mesh be a uniform grid of second order quadrilateral elements with $N = 2465$ degrees of freedom. As an initial guess we use the smallest eigenvectors of the problem at $y = 0$. In Figure \ref{fig:evals} we have presented the four smallest eigenvalues of the problem as a function of $y_1$, when $y_2, y_3, \ldots$ are held constant. We observe an eigenvalue crossing due to which the eigenvectors corresponding to the pointwise second and third smallest eigenvalues are discontinuous as functions of $y$.

\begin{figure}[htb]
\begin{center}
\subfloat[{Eigenvalues 1-4.}]{\includegraphics[width=0.48\textwidth]{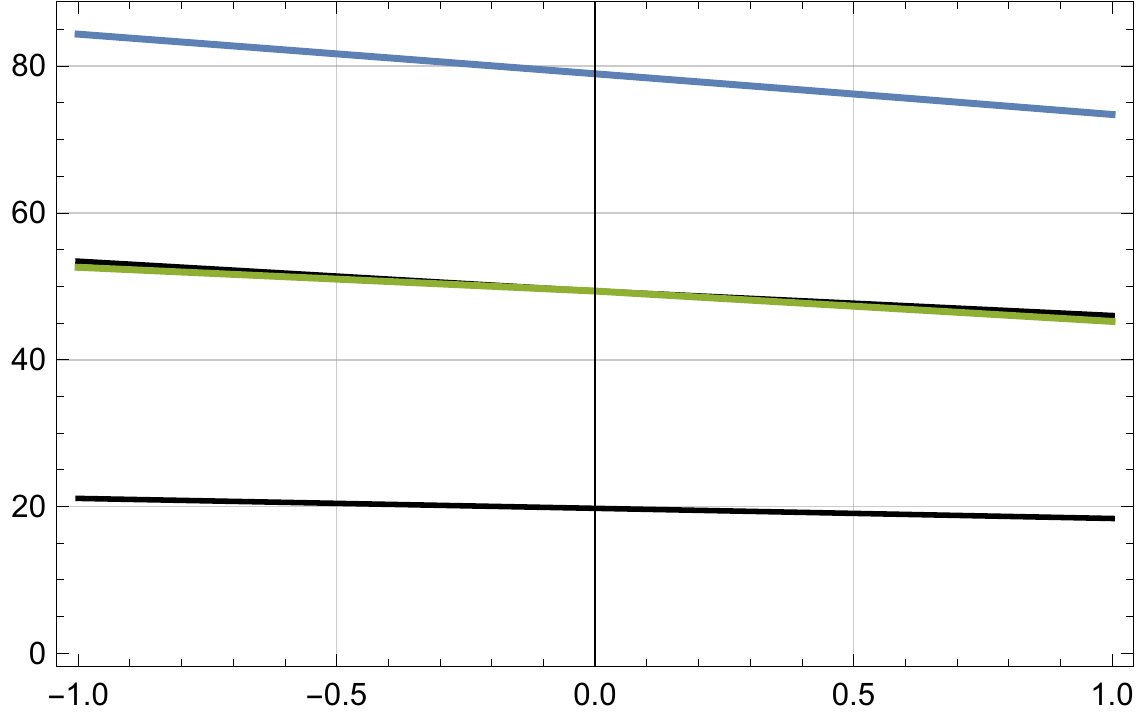}}\quad
\subfloat[{Eigenvalues 2-3.}]{\includegraphics[width=0.48\textwidth]{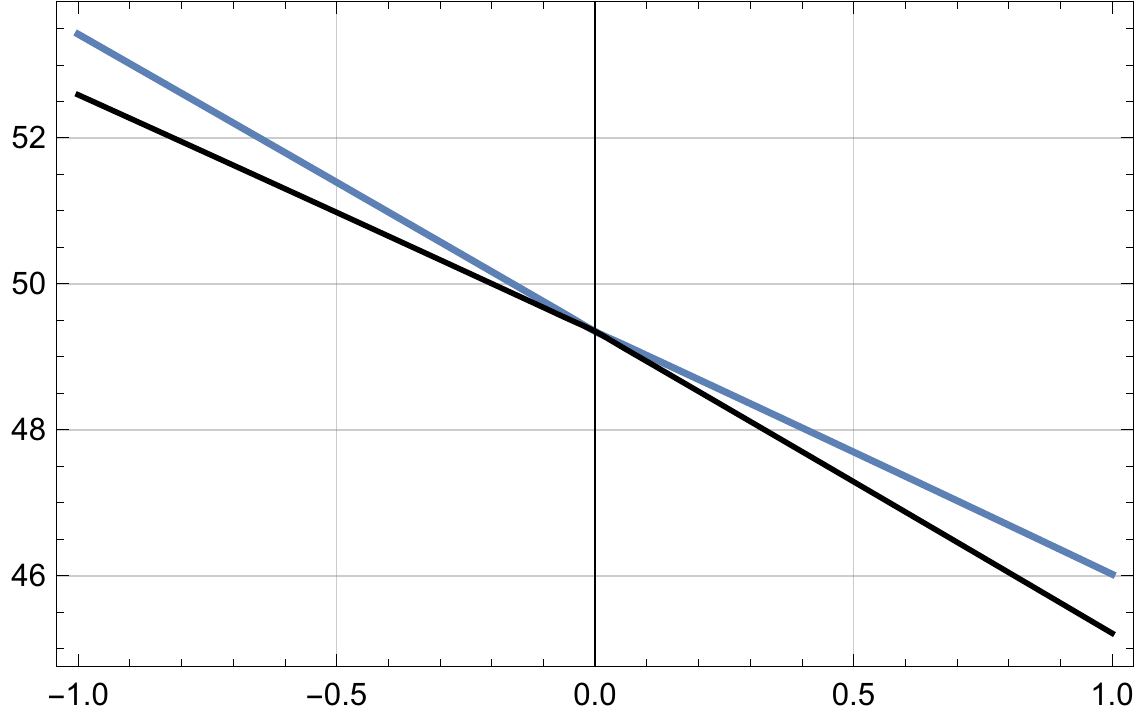}}
\caption{A few smallest eigenvalues of the model problem as a function of $y_1$ when $y_2 = y_3 = \ldots = 0$. The smallest eigenvalue is well-separated. However, we observe a crossing of the second and third smallest eigenvalues.}
\label{fig:evals}
\end{center}
\end{figure}

In order to investigate the convergence of the spectral subspace iteration, we attempt to estimate the angle between the exact invariant subspace and the approximate one computed by Algorithm \ref{alg:ssi}. For any fixed $y \in \Gamma$ we let $\vv_1(y), \ldots, \vv_Q(y)$ be a set of $\R^N_{\mbf{M}}$ orthonormal exact eigenvectors corresponding to the $Q$-smallest eigenvalues of the problem. We define
\[
\theta_k(y) := |\det(\Theta^{(k)}(y))|,
\]
where $\Theta^{(k)}(y) \in R^{Q \times Q}$ is a matrix with elements $\Theta^{(k)}_{ij}(y) = \langle \u^{(k)}_i(y), \vv_j(y) \rangle_{\R^N_{\mbf{M}}}$. Now $\theta_k(y)$ can be viewed as the cosine of the angle between the two subspaces at $y \in \Gamma$ (see for instance \cite{gunawanneswansetya-budhi05} formula (2.2)). Thus, convergence of the algorithm can be measured in terms of the statistics of $\theta_k$. In the following examples we have estimated the mean and variance of $\theta_k$ using the non-composite version of the sparse collocation operator employed in \cite{andreevschwab12}. For the definition of the collocation operator we have used the overkill multi-index set of section \ref{ssec:siinumex} ($\# \mc{A}_{\epsilon} = 264$).

Convergence of the spectral subspace iteration for $Q = 3$ is illustrated in Figure \ref{fig:subspace}. We see that the values $\arccos(\E[\theta_k])$ behave like $\lambda_{3/4}^k$, where $\lambda_{3/4}$ is the ratio of the third and fourth smallest eigenvalues of the problem. Simultaneously the values $\Var[\theta_k]$ converge to zero. These results suggest that the angle between the exact subspace and the approximation computed by Algorithm \ref{alg:ssi} converges to zero on $\Gamma$. Furthermore, the rate of convergence is characterised by the rate $\lambda_{3/4}^k$ much like for the classical subspace iteration. Note however, that with a fixed basis for polynomial approximation, i.e. a fixed multi-index set $\mc{A}_{\epsilon}$, only a certain accuracy for the output may be reached. Increasing the number of basis polynomials makes more accurate solutions achievable.

\begin{figure}[htb]
\begin{center}
\subfloat[{The values $\arccos(\E[\theta_k])$ for $\# \mc{A}_{\epsilon} = 52$.}]{\includegraphics[width=0.48\textwidth]{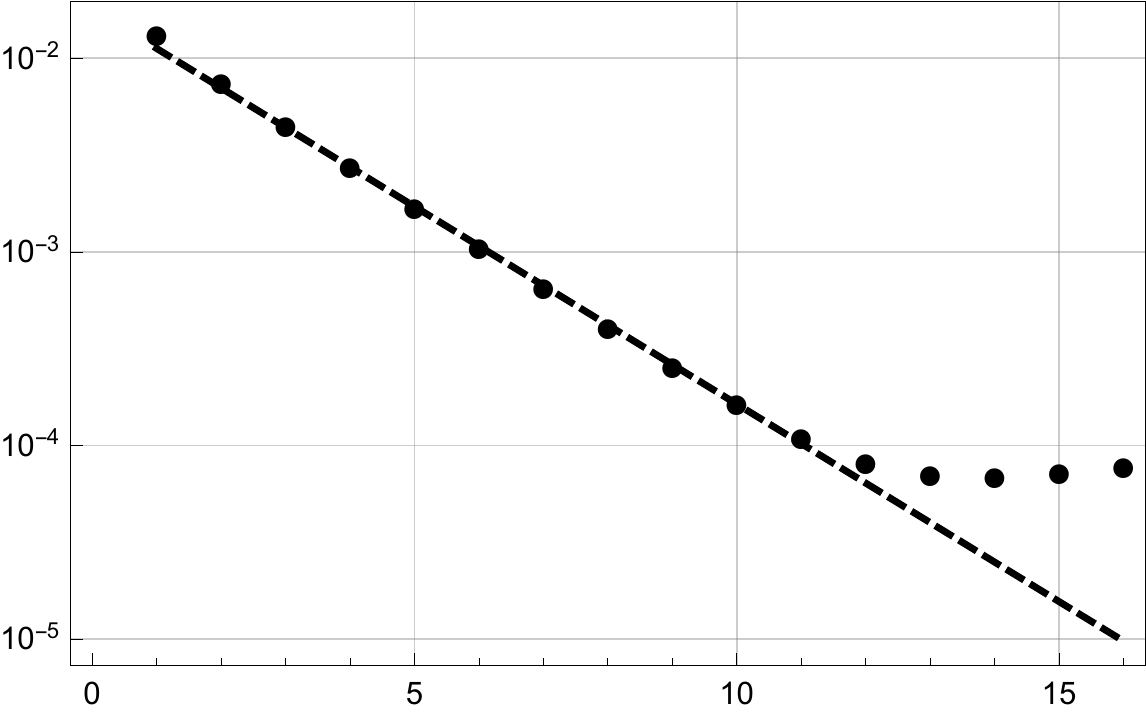}}\quad
\subfloat[{The values $\arccos(\E[\theta_k])$ for $\# \mc{A}_{\epsilon} = 121$.}]{\includegraphics[width=0.48\textwidth]{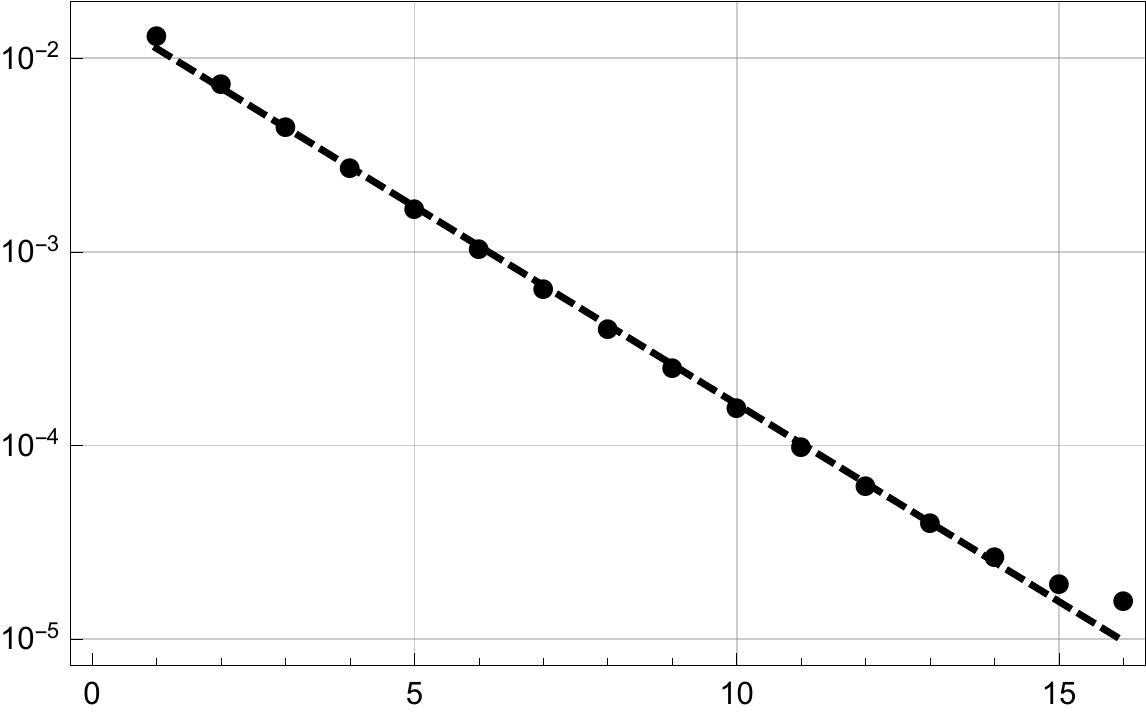}}\\
\subfloat[{The values $\Var[\theta_k]$ for $\# \mc{A}_{\epsilon} = 52$.}]{\includegraphics[width=0.48\textwidth]{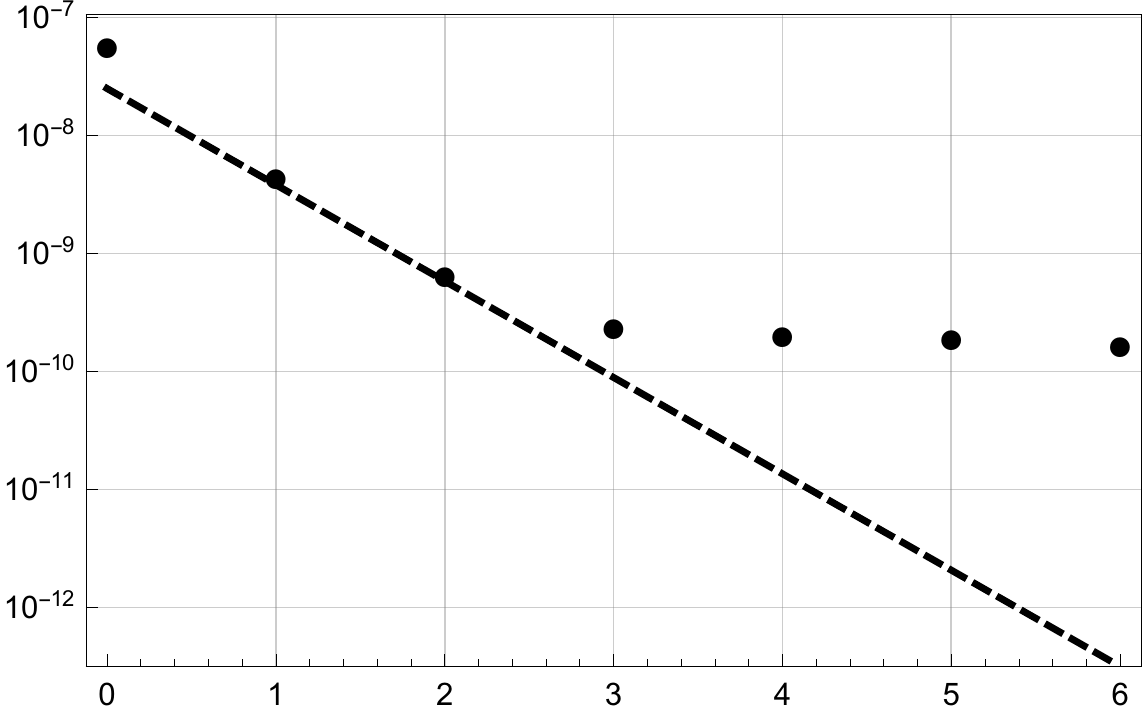}}\quad
\subfloat[{The values $\Var[\theta_k]$ for $\# \mc{A}_{\epsilon} = 121$.}]{\includegraphics[width=0.48\textwidth]{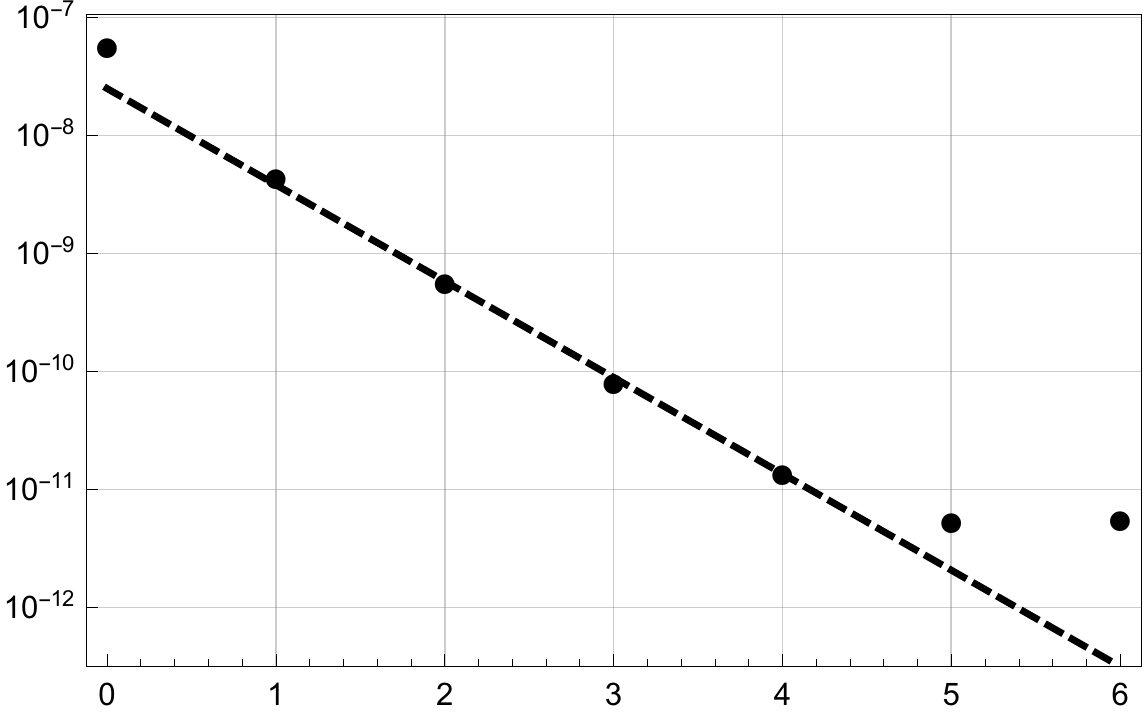}}
\caption{Convergence of the Algorithm \ref{alg:ssi} for $Q=3$. The points represent a log plot of approximate statistics of the error measure $\theta^{(k)}$ as a function of $k$. The dashed lines represent the rates $\bar{\lambda}_{3/4}^k$ and $\bar{\lambda}_{3/4}^{4k}$ for the top and bottom row plots respectively. Here $\bar{\lambda}_{3/4}$ is the ratio of the third and fourth smallest eigenvalues of the problem at $y=0$. }
\label{fig:subspace}
\end{center}
\end{figure}

\section{Conclusions and future prospects}

We have presented a comprehensive error analysis for the spectral inverse iteration, when applied to solving the ground state of a stochastic elliptic operator. We have also proposed a method of spectral subspace iteration and, using numerical examples, shown its potential in computing approximate subspaces associated to possibly clustered eigenvalues. Further analysis, both numerical and theoretical, of this algorithm is left for future research.

The numerical examples suggest that our algorithms are both accurate and efficient. However, theoretical estimates for the computational complexity are not entirely trivial to obtain as this would require information on the structure of the tensor of coeffiecients $c_{\a \b \g}$. Moreover, when iterative solvers are used, the optimal strategy is to increase the associated tolerances in the course of the iteration. We note that sparse products of the spatial and stochastic approximation spaces, as in \cite{bieriandreevschwab09}, may be applied to further reduce the computational effort, and that matrix free algorithms also allow for easy parallelization.

%\section*{References}
%
%\let\v\relax
\bibliography{bibfile}

\end{document}